\theoremstyle{plain}							
\newtheorem{thm}{Theorem}[section]			
\newtheorem{lem}[thm]{Lemma}					
\newtheorem{cor}[thm]{Corollary}	
\newtheorem{prop}[thm]{Property}
\newtheorem{propo}[thm]{Proposition}
\newtheorem{Def}[thm]{Definition}
\newtheorem{thmref}{Lemma}[section]			
\theoremstyle{definition}					
\newtheorem{rmq}[thm]{Remark}
\DeclareMathOperator*{\argmin}{argmin}		
\newcommand{\od}{\partial{}}					
\newcommand{\ud}{\mathrm{d}}					
\newcommand{\RR}[1]{\mathbb{R}^{#1}}			
\newcommand{\NN}{\mathbb{N}}					
\newcommand{\ind}[1]{\mathbbm{1}_{#1}}		
\newcommand{\ee}{\varepsilon}				
\newcommand{\ff}{\varphi}					
\newcommand{\HH}[1]{\mathcal{H}^{#1}}		
\newcommand{\Div}{\textup{div}}
\newcommand{\WW}{\overline{W}}
\newcommand{\Wp}{\overline{W}_p}
\newcommand{\OO}{\Omega}
\newcommand{\FF}{\mathcal{F}}
\title{The $L^1$ gradient flow of a generalized scale invariant Willmore energy for radially non increasing functions.}
\author{François DAYRENS}
\begin{document}

\maketitle

\begin{abstract}
We use the minimizing movement theory to study the gradient flow associated to a non-regular relaxation of a geometric functional derived from the Willmore energy. Thanks to the coarea formula, we can define a Willmore energy on regular functions of $L^1(\mathbb{R}^d)$. This functional is extended to every $L^1$ function by taking its lower semicontinuous envelope. We study the flow generated by this relaxed energy for radially non-increasing functions (functions with balls as superlevel sets). In the first part of the paper, we prove a coarea formula for the relaxed energy of such functions. Then we show that the flow consists of an erosion of the initial data. The erosion speed is given by a first order ordinary equation.
\end{abstract}

\noindent \textbf{MSC classification:} 49J45, 49J52, 46S30.

\noindent \textbf{Keywords:} minimizing movement, Willmore energy, radial functions, gradient flow.


\section{Introduction and settings.}

This paper is devoted to De Giorgi's minimizing movement solutions of a generalized scale invariant Willmore flow. These solutions are obtained as limits of discrete generalized Willmore flows built upon the minimization in $L^1(\RR{d})$ of a relaxation of the functional
\begin{equation} \label{intro min}
\FF(u)=\mathlarger\int_{\RR{d}} |\nabla u| \left| \Div \frac{\nabla u}{|\nabla u|} \right|^{d-1} \ud x
+ \frac{1}{2\tau}||u-u_n||^2_{L^1}
\end{equation}
where $u_n$ is updated at each discrete time step and $u_0\in L^1$ is an initial datum (see below for more details on minimizing movement solutions).

The first term in $\FF$ is a case of the generalized $p$-Willmore energy
\[ W_p(u)=\mathlarger\int_{\RR{d}} |\nabla u| \left| \Div \frac{\nabla u}{|\nabla u|} \right|^{p} \ud x \]
which arise in image processing and 2D or 3D shape completion \cite{Ambrosio2003} \cite{Bredies} \cite{Droske2004} \cite{Masnou2006}. 

Using the coarea formula (see Theorem \ref{coarea general}), one can decompose $W_p(u)$ on every superlevel sets:
\[ W_p(u) = \int_{-\infty}^{+\infty} \left( \int_{\od \{u\geqslant t\} } 
\left| \overrightarrow{H} \right|^p \ud \HH{d-1} \right) \ud t \]
with $\overrightarrow{H}$ the mean curvature vector on the boundary of the superlevel set $\{ u\geqslant t \}$ and $\HH{d-1}$ the $(d-1)$-Hausdorff measure.

This leads us to consider the $p$-Willmore energy of a hypersurface $M$ of $\RR{d}$:
\[ W_p(M)= \int_M |H|^p \ud A \]
where $H$ is its scalar mean curvature and $\ud A$ its area measure. As one can see, for $p=d-1$, $W_{d-1}(\lambda M)=W_{d-1}(M)$ for any $\lambda>0$ therefore $W_{d-1}$ is scale invariant. We refer to \cite{Anzellotti1990} to see how the direct method of calculus of variations can be used on that kind of energies. 

For $p=2$ and $d=3$, $W_2$ is the traditional conformal Willmore energy of a surface in $\RR{3}$ and its classical smooth flow is given by
\[ \frac{\ud}{\ud t} M_t = \Delta_{M_t}H_t+2H_t(H_t^2-K_t) \]
with $\Delta_{M_t}$ the Laplace-Beltrami operator on $M_t$, $H_t$ its mean curvature and $K_t$ its Gauss curvature. We refer to \cite{Simonett2001}, \cite{Kuwert2001,Kuwert2004} and \cite{Droske2004} for topics concerning this regular flow.

For $p=2$ and $d=2$, $W_2$ is the famous Bernoulli-Euler \emph{elastica}:
\[ \int \kappa^2 \ud s \]
where $\kappa$ is the curvature of a regular curve in the plane. The flow of a family of curves $(\gamma_t)$ is given by the equation
\[ \frac{\ud}{\ud t} \gamma_t = \left( \kappa_t''+\frac{1}{2}\kappa_t^3 \right) \vec{n}_t \]
with $\vec{n}_t$ the unit normal vector of $\gamma_t$. The elastica theory is a part of the solid materials mechanics initially developed by Euler and Bernoulli and devoted to the study of the deflection of thin beams. We refer to \cite{Sachkov2008} for a historical overview. Elastica has many applications in shape completion and inpainting, see \cite{Cao2011,Chan2002} for an account on these topics. One may also refer to \cite{Bredies} for a functional lifting approach on elastica approximation in the context of image processing. Finally, reader will find in \cite{Citti2006} links between elastica and amodal completion methods based on human visual perception.

As we can see from the previous formulas, the smooth Willmore flow is a geometric flow defined by a fourth order equation. A celebrated example of geometric flow is the mean curvature flow, which is defined by a second order equation derived from the area of a hypersurface and which received a lot of attention. For instance, in \cite{Huisken1990}, Huisken studied the regular flow and singularity appearance. To deal with singularities, the flow is extended in \cite{Brakke1978} to varifolds, a weak generalization of surfaces. The so called level set approach to mean curvature flow is introduced in \cite{Chen1991} and \cite{Evans1991,Evans1992a,Evans1992b,Evans1995}, see \cite{Giga2006} for a good overview on surface evolution using level set methods. Finally, \cite{Chambolle2004} and \cite{Luckhaus1995} use a minimizing movement approach to study this flow and the second paper even provides an algorithm to compute it.

Among many generalizations of the area of a surface, let us focus on the theory developed in the framework of $BV$ functions, i.e. functions with bounded variation. In this setting, the area of the boundary of a set $E$ coincides with the total variation of its characteristic functions:
\[ P(E)=TV(\ind{E}) \]
where $TV$ is the total variation (see \cite{Ambrosio2000,Evans1992,Ziemer1989}) and $P$ the perimeter of $E$ (or the area of $\od E$). The coarea formula for BV functions states that
\[ TV(u)=\int_{-\infty}^{+\infty} P(\{ u\geqslant t\} ) \ud t . \]
It is natural to wonder whether the gradient flow of $TV$ can be recovered from the individual gradient flows of the perimeter of each superlevel set. Authors in \cite{Bellettini2002} show that the total variation flow is different and, when starting with $u_0=\ind{E}$, it follows that
\[ u_t=(1-c_Et)_+\ind{E} \]
with $c_E$ a constant depending only on $E$. In contrast, for the individual flow of superlevel sets, it is prooved in \cite{Luckhaus1995} that the minimizing movement flow for $P$ corresponds to the mean curvature flow for regular surfaces. Besides, $TV$ is a first order functional, similar to our second order functional $\FF$, used in \cite{Rudin1992} for image denoising problems.

Returning to our fourth order flow in $L^1$ defined by the minimization of (\ref{intro min}), we will prove a very similar evolution for radially non increasing functions in the weaker setting of De Giorgi's minimizing movement theory \cite{Almgren1993,DeGiorgi1993,Luckhaus1995}. We start with a non negative, bounded, compactly supported and radially non increasing initial datum $u_0:\RR{d}\to\RR{}$. That means there exists a function $r:[0,a]\to\RR{+}$ such that
\[ \{ u_0\geqslant t\} = B(0,r(t)) .\]
We prove in this paper that the minimizing movement flow is an erosion of the initial datum. This erosion is described by an ordinary differential equation involving only the radius function $r$.

The plan of the paper is as follows: in section \ref{partie coarea}, we prove a coarea formula for the Willmore energy. This allows us to show in section \ref{partie mouvement} that a minimizer of (\ref{intro min}) is radially non increasing. We iterate to construct a minimizing sequence and then we describe the minimizing movement. The found flow is very close to the total variation flow in \cite{Bellettini2002}, i.e. an erosion of the initial datum. We prove in Theorem \ref{mvt min rad} that the minimizing movement $t\mapsto u_t$ exists and is described through a function $\lambda : [0,+\infty[ \to \RR{+}$ (see Figure \ref{intro}):
\[ u_t(x)=\min(u_0(x),\lambda(t)). \]

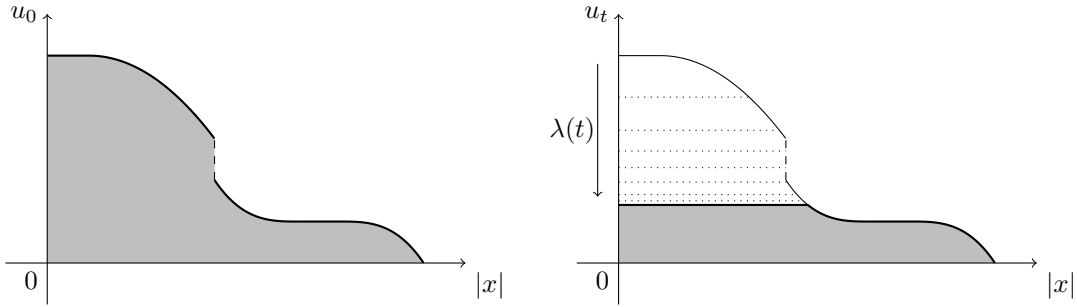
\begin{figure}[!ht]

\begin{center}
\begin{tikzpicture}[scale=0.55]
\fill [color=gray!50] (0,5) -- (1,5) --
plot [domain=1:4,samples=100] (\x,{ 5-(2/9)*(\x-1)^2 })
-- (4,2) -- (4,3) --
plot [domain=4:6,samples=100] (\x,{ 1-(1/8)*(\x-6)^3 }) 
-- (6,1) -- (7,1) --
plot [domain=7:9,samples=100] (\x,{ 1-(1/8)*(\x-7)^3 })
-- (9,0) -- (0,0) -- cycle ;
\draw (0,0) node[below left] {$0$} ;
\draw [->] (-1,0) -- (10,0) node[below right] {$|x|$} ;
\draw [->] (0,-1) -- (0,6) node[left] {$u_0$} ;
\draw [thick] (0,5) -- (1,5) 
plot [domain=1:4,samples=100] (\x,{ 5-(2/9)*(\x-1)^2 })
plot [domain=4:6,samples=100] (\x,{ 1-(1/8)*(\x-6)^3 }) 
(6,1) -- (7,1)
plot [domain=7:9,samples=100] (\x,{ 1-(1/8)*(\x-7)^3 }) ;
\draw [dashed] (4,2) -- (4,3) ;
\end{tikzpicture}
~
\begin{tikzpicture}[scale=0.55]
\fill [color=gray!50] (0,1.4) -- (4.52,1.4) --
plot [domain=4.52:6,samples=100] (\x,{ 1-(1/8)*(\x-6)^3 }) 
-- (6,1) -- (7,1) --
plot [domain=7:9,samples=100] (\x,{ 1-(1/8)*(\x-7)^3 })
-- (9,0) -- (0,0) -- cycle ;
\draw (0,0) node[below left] {$0$} ;
\draw [->] (-1,0) -- (10,0) node[below right] {$|x|$} ;
\draw [->] (0,-1) -- (0,6) node[left] {$u_t$} ;
\draw (0,5) -- (1,5) 
plot [domain=1:4,samples=100] (\x,{ 5-(2/9)*(\x-1)^2 })
plot [domain=4:4.52,samples=100] (\x,{ 1-(1/8)*(\x-6)^3 }) ;
\draw [thick] (0,1.4) -- (4.52,1.4)
plot [domain=4.52:6,samples=100] (\x,{ 1-(1/8)*(\x-6)^3 }) 
(6,1) -- (7,1)
plot [domain=7:9,samples=100] (\x,{ 1-(1/8)*(\x-7)^3 }) ;
\draw [dotted] (0,4) -- (3.12,4) ;
\draw [dotted] (0,3.2) -- (3.84,3.2) ;
\draw [dotted] (0,2.7) -- (4,2.7) ;
\draw [dotted] (0,2.3) -- (4,2.3) ;
\draw [dotted] (0,1.95) -- (4.03,1.95) ;
\draw [dotted] (0,1.65) -- (4.26,1.65) ;
\draw [dotted] (0,1.5) -- (4.4,1.5) ;
\draw [dashed] (4,2) -- (4,3) ;
\draw [->] (-0.5,4.8) -- (-0.5,1.6) ;
\draw (-1.1,3.2) node {$\lambda(t)$} ;
\end{tikzpicture}
\end{center}

\caption{Minimizing movement of a radially non-increasing function for the Willmore energy.}
\label{intro}

\end{figure}

\noindent Moreover, Theorem \ref{EDO} states that $\lambda$ is a solution of the ordinary differential equation
\begin{equation} \label{EDO intro}
\left\{ \begin{array}{rcl}
\lambda'(t) & = & \displaystyle -\frac{\omega_d}{\alpha_d^2} \frac{1}{r_0(\lambda(t))^{2d}} \\
\lambda(0) & = & \sup u_0 , \end{array} \right. \end{equation}
where $\frac{\omega_d}{\alpha_d^2}$ is simply a dimensional constant. ODE (\ref{EDO intro}) implies one question we cannot answer for now: what happens if the initial condition $\lambda(0)$ satisfies $r(\lambda(0))=0$? Implicitly, ODE (\ref{EDO intro}) indeed says the erosion starts with a non finite speed.

\subsection{Minimizing movements.}

We now recall the notion of minimizing movement. The idea is to define the gradient flow for a possibly non regular functional. Minimizing movements can be defined in Banach spaces but are easier to understand and to handle in Hilbert spaces, see \cite{Ambrosio1995,Ambrosio2005,DeGiorgi1993} for a good account on this theory. In this paper we will build a minimizing movement in a self contained way. Let us first recall the basics of this theory on Hilbert spaces.

Let $H$ be a Hilbert space and consider
\[ f : H \to \overline{\RR{}} . \]

If $f$ is regular (at least $C^1$) then its (minimizing) gradient flow is defined by the ordinary differential equation:
\begin{equation} \label{ODE min}
\left\{ \begin{array}{rcl}
u'(t) & = & -\nabla f(u(t)) \\
u(0) & = & x.
\end{array} \right.
\end{equation}
In order to extend this notion to a non regular functional, we can use the implicit Euler scheme for solving this ordinary differential equation: for a small time parameter $\tau > 0$:
\[ \frac{u_{n+1}-u_n}{\tau} = -\nabla f(u_{n+1}). \]
Write
\[ F(y)=f(y)+\frac{1}{2\tau}||y-u_n||^2 \]
and the equation becomes
\[ \nabla F(u_{n+1}) = 0. \]
Thus, we transform ODE (\ref{ODE min}) into the following minimization problem: given a starting point $x\in H$, find a sequence $(u_n)$ satisfying $u_0=x$ and
\[ u_{n+1} \in \argmin_{v\in H} \left( f(v)+\frac{1}{2\tau}||v-u_n||^2 \right) . \]
Moreover, it yields the same solutions for regular functions as shown by Theorem \ref{mvt regulier}.

\begin{Def}[Minimizing sequence]
Let $\tau > 0$ and $x_0 \in$ \textup{dom}($f$)$=\{ x \in H \ | \ -\infty < f(x) < +\infty \}$, the minimizing sequence $(x_\tau (n))$ of $f$ starting from $x_0$ is defined by induction: \\
$x_\tau (0)=x_0$ and $x_\tau (n+1)$ is a minimizer of
\[ y \mapsto f(y) + \frac{1}{2\tau}||y-x_\tau(n)||^2 . \]
\end{Def}

\noindent From that sequence, we can define a piecewise constant path
\[ u_\tau (t) = x_\tau ([t/\tau]) \]
with $[.]$ the integer part.

\begin{Def}[Minimizing movement]
We say that $u:[0,+\infty[ \to H$ is a minimizing movement for $f$ starting from $x_0$ if there exists a minimizing sequence such that $u$ is a uniform limit of $u_\tau$ when $\tau \to 0$ (up to a subsequence $\tau_i \to 0$).
\end{Def}

\begin{rmq}
If the starting point $x_0$ is a local finite minimizer of $f$, then $u(t)=x_0$ is the only minimizing movement starting from $x_0$. Indeed (within  a constant we can suppose that this minimum is positive) in a ball $B(x_0,\delta)$ we have $0<f(x_0) \leqslant f(x)$. Then we obtain
\[ f(x_0) \leqslant f(x) + \frac{1}{2\tau}||x-x_0||^2 \quad \textup{on } B(x_0,\delta) \]
with equality only when $x=x_0$. \\
For $\tau$ small enough $\tau < \frac{\delta^2}{2f(x_0)}$, we have
\[ f(x_0) \leqslant f(x) + \frac{1}{2\tau}||x-x_0||^2 \quad \textup{on } H \]
with the same equality criterion. \\
Thus the minimizing sequence is constant and so is the minimizing movement.
\end{rmq}

There are lot of results explaining why the minimizing movement corresponds to the classical gradient flow in smooth case, for instance (see \cite[esempio 1.1]{Ambrosio1995}):

\begin{thm} \label{mvt regulier}
Consider a $C^2$ function $f : H \to \RR{} $. If $f$ is Lipschitz continuous or $\displaystyle \lim_{||x|| \rightarrow +\infty } f(x)=+\infty$ then for all starting point $x_0$, $f$ admits a unique minimizing movement starting from $x_0$ given by
\[ \left\{ \begin{array}{rcl}
u'(t) & = & -\nabla f(u(t)) \\
u(0) & = & x_0 . \end{array} \right. \]
\end{thm}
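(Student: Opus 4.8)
The plan is to prove existence, uniqueness of the minimizing movement and its identification with the gradient flow all at once, by comparing the discrete scheme directly with the solution of the limit ODE rather than extracting abstract compactness (which is delicate in an infinite dimensional $H$). First I would record that the Cauchy problem $u'=-\nabla f(u)$, $u(0)=x_0$ has a unique global solution $u$. Local existence and uniqueness follow from Cauchy--Lipschitz, since $f\in C^2$ makes $\nabla f$ locally Lipschitz. Globality follows from the assumptions: in the Lipschitz case $||\nabla f||$ is bounded, so $u$ has bounded speed and cannot blow up, while in the coercive case $\frac{\ud}{\ud t}f(u(t))=-||\nabla f(u(t))||^2\leqslant 0$ keeps $u(t)$ in the bounded sublevel set $\{ f\leqslant f(x_0)\}$.

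Next I would justify that the minimizing sequence is well defined. Fix $u_n=x_\tau(n)$ and set $F(y)=f(y)+\frac{1}{2\tau}||y-u_n||^2$. In both cases $F$ is coercive, and its sublevel set $\{F\leqslant f(u_n)\}$ is contained in a fixed bounded set $K$ (a neighbourhood of $\{f\leqslant f(x_0)\}$, or simply $\overline{B}(u_n,2L\tau)$ in the Lipschitz case). On $K$ the Hessian $\nabla^2 f$ is bounded below by $-M\,\mathrm{Id}$, so for $\tau<1/M$ the function $F$, whose Hessian is $\nabla^2 f+\frac{1}{\tau}\mathrm{Id}$, is strongly convex there; this yields existence and uniqueness of the minimizer $x_\tau(n+1)$ for all $\tau$ small enough, uniformly in $n$. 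Writing the Euler--Lagrange equation of this interior minimizer gives the implicit Euler relation
\begin{equation*}
\frac{x_\tau(n+1)-x_\tau(n)}{\tau}=-\nabla f(x_\tau(n+1)),
\end{equation*}
together with the energy estimate $f(x_\tau(n+1))+\frac{1}{2\tau}||x_\tau(n+1)-x_\tau(n)||^2\leqslant f(x_\tau(n))$, which keeps every iterate in $K$.

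Finally I would pass to the limit by a discrete Gronwall argument. Setting $t_n=n\tau$ and $e_n=x_\tau(n)-u(t_n)$, I subtract the implicit Euler relation from $u(t_{n+1})-u(t_n)=-\int_{t_n}^{t_{n+1}}\nabla f(u(s))\,\ud s$ and use that $s\mapsto\nabla f(u(s))$ is $C^1$ to write $\int_{t_n}^{t_{n+1}}\nabla f(u(s))\,\ud s=\tau\,\nabla f(u(t_{n+1}))+O(\tau^2)$. Since $\nabla f$ is Lipschitz on $K$ with some constant $\Lambda$, this gives $||e_{n+1}||\leqslant||e_n||+\tau\Lambda||e_{n+1}||+C\tau^2$, and the discrete Gronwall lemma yields $\max_{t_n\leqslant T}||e_n||=O(\tau)$ on every interval $[0,T]$. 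Adding the $O(\tau)$ oscillation of $u$ between consecutive nodes shows that the piecewise constant path $u_\tau$ converges uniformly on compact intervals to $u$, at rate $O(\tau)$. As this holds for every admissible choice of minimizers, the minimizing movement exists, is unique and equals $u$.

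The step I expect to be the main obstacle is the infinite dimensional existence of the iterates: a continuous coercive functional need not attain its infimum in $H$ when $f$ is not convex, so the argument genuinely relies on the regularizing term $\frac{1}{\tau}\mathrm{Id}$ overpowering the (only locally) bounded Hessian of $f$ to restore strong convexity on the relevant bounded set. Controlling the constants $M$ and $\Lambda$ uniformly in $n$ --- which is what makes the Gronwall estimate closed --- is the delicate bookkeeping, and it is exactly there that the two hypotheses (Lipschitz, or coercive with $C^2$ regularity) are used.
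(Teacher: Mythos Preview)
The paper does not give its own proof of this theorem: it simply states the result and cites \cite[esempio 1.1]{Ambrosio1995}. So there is no argument in the paper to compare yours against.

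Your outline is the standard one and would be complete if $H$ were finite dimensional. The gap is precisely the point you flag as ``delicate bookkeeping'' but do not actually close: in an infinite dimensional Hilbert space, bounded sets are not compact, and a $C^2$ function need not have its Hessian bounded (in operator norm) on a bounded set. Consequently there is no reason, from the stated hypotheses alone, that a finite constant $M$ with $\nabla^2 f\geqslant -M\,\mathrm{Id}$ on $K$ exists, nor that $\nabla f$ is globally Lipschitz on $K$ with some finite $\Lambda$. Without $M$ your strong-convexity argument for existence and uniqueness of the iterate collapses (and with it the implicit Euler identity you need), and without $\Lambda$ the discrete Gronwall step does not close. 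Neither the Lipschitz hypothesis on $f$ (which bounds $\nabla f$, not $\nabla^2 f$) nor the coercivity hypothesis supplies these bounds.

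If you restrict to $H=\RR{n}$ --- which is the setting of the cited example in \cite{Ambrosio1995} --- then $K$ is compact, $\nabla^2 f$ is bounded on $K$ by continuity, and every step of your argument goes through cleanly. So either make the finite-dimensional restriction explicit, or add an extra hypothesis (e.g.\ $\nabla^2 f$ bounded on bounded sets, or $\nabla f$ globally Lipschitz) and say so.
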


\subsection{Scale invariant Willmore energy.}

We introduce the Willmore energy to which we will apply the minimizing movement principles. Consider $M$ a $C^2$ hypersurface of $\RR{d}$ with $d\geqslant 2$ and its principal curvatures $\kappa_1,\dots,\kappa_{d-1}$. The (scalar) mean curvature on every point $p$ on $M$ is then given by
\[ H(p) = \frac{1}{d-1} \sum_{i=1}^{d-1}\kappa_i(p) . \]
We refer to \cite{Berger1987} for generalities on differential geometry.

\begin{Def}[Scale invariant Willmore energy] \label{def willmore ensemble}
Let $M$ be a $C^2$ hypersurface of $\RR{d}$ and $H$ its mean curvature, the scale invariant Willmore energy of $M$ is given by
\[ W(M) = \int_{M} |H|^{d-1} \ud \HH{d-1}, \]
with $\HH{d-1}$ the $d-1$ Hausdorff measure on $\RR{d}$ or equivalently the area measure on $M$.
\end{Def}

The Willmore energy can be also defined with other exponents than $d-1$, but in that case it is no longer invariant under scaling transformations. The extension to functions of the Willmore energy is straightforward.

\begin{Def} \label{def willmore fonction}
Let $u:\RR{d} \to \RR{}$ be a $C^2$ function, its scale invariant Willmore energy is given by
\[ W_{f}(u) = \frac{1}{(d-1)^{d-1}}\mathlarger\int_{\RR{d}} |\nabla u| \left|\Div \frac{\nabla u}{|\nabla u|}\right|^{d-1} \ud x , \]
with the convention $|\nabla u| \left|\Div \frac{\nabla u}{|\nabla u|}\right|^{d-1} = 0$ if $ |\nabla u|=0$.
\end{Def}

The link between the two definitions follows from the coarea formula which we now recall (see \cite{Evans1992}).

\begin{thm}[General coarea formula] \label{coarea general}
If $f:\RR{n} \to \RR{m}$ is a Lipschitz continuous map with $m \leqslant n$ then for every measurable function $g\in L^1(\RR{n})$ we have
\[ \int_{\RR{m}} \left( \int_{f^{-1}(y)} g(x)\ud \HH{n-m}(x) \right) \ud \HH{m}(y)
 = \int_{\RR{n}}g(x)J_f(x)\ud \HH{n}(x) . \]
where $J_f$ is the generalized Jacobian of $f$.
\end{thm}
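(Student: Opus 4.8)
The plan is to reduce the general statement to the simplest possible case by a chain of approximations, and then to settle that base case with Fubini's theorem.

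First I would reduce the claim about a general $g\in L^1$ to the purely measure-theoretic statement obtained by taking $g=\ind{A}$ for a measurable set $A\subset\RR{n}$, namely
\[ \int_{\RR{m}}\HH{n-m}\!\left(A\cap f^{-1}(y)\right)\ud\HH{m}(y)=\int_{A}J_f(x)\ud\HH{n}(x). \]
Once this is known, the full formula follows by the standard three-step argument: it extends by linearity to nonnegative simple functions, then to arbitrary nonnegative measurable $g$ by monotone convergence (writing $g$ as an increasing limit of simple functions), and finally to general $g\in L^1$ by splitting $g=g^{+}-g^{-}$. This step is routine and I would not dwell on it.

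Second I would reduce the regularity of the map. Since $f$ is Lipschitz, Rademacher's theorem guarantees it is differentiable $\HH{n}$-almost everywhere, so $Df$ and hence $J_f=\sqrt{\det\left(Df\,(Df)^{T}\right)}$ are defined a.e. A Lusin-type approximation then lets me write $\RR{n}$, up to an $\HH{n}$-null set, as a countable disjoint union of Borel sets on each of which $f$ agrees with a $C^1$ map. This reduces the problem to $C^1$ maps, provided I can control the contribution of the critical set $\{J_f=0\}$: there the right-hand side vanishes, and one must check the left-hand side does too, i.e. that $\HH{n-m}$-almost every fibre meets the critical set negligibly. For a $C^1$ map I would then argue by localization and linearization. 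Near a point $x_0$ the map is well approximated by its affine part $x\mapsto f(x_0)+Df(x_0)(x-x_0)$, and $J_f$ is continuous. Covering the domain by small sets on which $f$ stays within $\ee$ of its linearization, I apply the formula for linear maps on each piece and sum, sending $\ee\to0$ and using the continuity of $J_f$ to match the two sides. The base case for a \emph{linear} map $L:\RR{n}\to\RR{m}$ is established directly: using the singular value decomposition $L=O_1\,D\,O_2$ with $O_1,O_2$ orthogonal and $D$ diagonal, and the invariance of Hausdorff measures under orthogonal transformations, the identity collapses to Fubini's theorem in the coordinates adapted to $D$, where the fibres $L^{-1}(y)$ are parallel affine $(n-m)$-planes and $J_L=\sqrt{\det(L L^{T})}$ is exactly the product of the singular values of $L$.

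The main obstacle is the passage from the linear model to the $C^1$ (and then Lipschitz) map: one must show that the two-sided error incurred by replacing $f$ with its linearization on each small cell is genuinely negligible in the limit. This requires uniform control, on small balls, of both the distortion of the fibres $f^{-1}(y)$ in the $\HH{n-m}$ measure and the oscillation of $J_f$, together with a Vitali or Besicovitch covering argument to assemble the local estimates without overcounting. A secondary subtlety, already flagged above, is establishing that $f^{-1}(y)$ is $\HH{n-m}$-measurable and $\HH{n-m}$-rectifiable for $\HH{m}$-almost every $y$, so that the inner integral on the left-hand side is even well defined; this is precisely what makes the critical-set analysis necessary.
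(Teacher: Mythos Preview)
The paper does not give its own proof of this theorem: it is stated with a reference to \cite{Evans1992} (Evans--Gariepy, \emph{Measure Theory and Fine Properties of Functions}) and used as a background tool. Your outline is essentially the standard argument found in that reference---reduction to characteristic functions, Rademacher plus a Lusin-type $C^1$ approximation of the Lipschitz map, separate treatment of the critical set, and a linearization that bottoms out in Fubini's theorem via the singular value decomposition---and you have correctly identified the two genuine technical hurdles (controlling the fibre distortion in the passage from linear to $C^1$, and the measurability/rectifiability of almost every fibre). There is nothing to compare against in the paper itself.
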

Applying this formula to the function
\[ g(x)=\ind{ \{x \ | \ J_f(x)=0 \} } \]
one obtains the following corollary (Federer-Morse-Sard Theorem, see \cite[Remark 2.97]{Ambrosio2000})

\begin{cor}
If $f:\RR{n} \to \RR{m}$ is a Lipschitz continuous map with $m \leqslant n$ then, for $\HH{m}$-almost every $y\in \RR{m}$,
\[ \HH{n-m}\left( \{ x\in \RR{n} \ | \ f(x)=y, J_f(x)=0 \} \right) =0 . \]
\end{cor}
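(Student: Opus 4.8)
The plan is to follow the hint and feed the indicator of the critical set directly into the general coarea formula. I would set $g = \ind{\{x \in \RR{n} \mid J_f(x) = 0\}}$. Since $f$ is Lipschitz, $J_f$ is defined $\HH{n}$-almost everywhere and is measurable, so $g$ is a well-defined nonnegative measurable function. Applying Theorem \ref{coarea general} with this $g$, the right-hand side is
\[ \int_{\RR{n}} \ind{\{J_f = 0\}}(x)\, J_f(x)\, \ud\HH{n}(x), \]
and this integrand vanishes identically: wherever $g(x) \neq 0$ one has $J_f(x) = 0$, so the product $g(x)J_f(x)$ is zero at every point. Hence the right-hand side equals $0$.

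On the left-hand side, the inner integral is
\[ \int_{f^{-1}(y)} \ind{\{J_f = 0\}}(x)\, \ud\HH{n-m}(x) = \HH{n-m}\big( \{x \in \RR{n} \mid f(x) = y,\ J_f(x) = 0\} \big), \]
because integrating the indicator of a set against $\HH{n-m}$ on the fiber $f^{-1}(y)$ returns exactly the $\HH{n-m}$-measure of the portion of the fiber contained in $\{J_f = 0\}$. Writing $N(y)$ for this measure, the coarea identity reduces to $\int_{\RR{m}} N(y)\, \ud\HH{m}(y) = 0$. As $N$ is nonnegative and has vanishing integral, it must be zero $\HH{m}$-almost everywhere, so $N(y) = 0$ for $\HH{m}$-almost every $y \in \RR{m}$, which is precisely the claimed statement.

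The one point requiring care — and the main obstacle — is that $g = \ind{\{J_f = 0\}}$ need not belong to $L^1(\RR{n})$, since the critical set can have infinite $\HH{n}$-measure (consider a constant map), so Theorem \ref{coarea general} does not apply verbatim. To remedy this I would localize: for each integer $k$ apply the formula to $g_k = \ind{\{J_f = 0\} \cap B(0,k)}$, which is integrable. The computation above then produces an $\HH{m}$-null set $Z_k \subset \RR{m}$ outside of which $\HH{n-m}(\{x \in B(0,k) \mid f(x) = y,\ J_f(x) = 0\}) = 0$. Setting $Z = \bigcup_k Z_k$, still $\HH{m}$-null, and letting $k \to \infty$ by monotone convergence of the increasing sets on each fiber recovers the full conclusion for every $y \notin Z$. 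Alternatively, one invokes the fact that the coarea formula extends to arbitrary nonnegative measurable $g$ (with both sides allowed to equal $+\infty$) by monotone convergence, which bypasses the integrability hypothesis and applies directly to $g = \ind{\{J_f = 0\}}$.
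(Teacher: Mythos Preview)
Your argument is correct and matches the paper's own approach exactly: the paper simply states that the corollary follows by applying Theorem \ref{coarea general} to $g = \ind{\{J_f = 0\}}$, which is precisely what you do. Your additional care about the integrability of $g$ (localizing to balls $B(0,k)$ or invoking the nonnegative version of the coarea formula) is a genuine refinement over the paper's one-line hint, which glosses over this point.
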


\noindent For $m=1$ and $n=d$ we have $J_f(x)=|\nabla f(x)|$ therefore this corollary implies that, for almost every $t\in \RR{}$, $\{ x \ | \ f(x)=t \}$ is locally a $(d-1)$-rectifiable set.
For convenience, we denote $\{ u\geqslant t\} = \{ x\in\RR{d} \ | \ u(x)\geqslant t \}$.

For $u$ a $C^2$ function, we have $\od \{ u \geqslant t \} \subset \{ u=t \}$. Assume that $|\nabla u| \neq 0$ on $\{ u=t \}$ then this level set is a $C^2$ hypersurface and $\od \{ u \geqslant t \} = \{ u=t \}$. Moreover, its mean curvature vector is given by
\[ \overrightarrow{H} = \frac{-1}{d-1} \Div \left( \frac{\nabla u}{|\nabla u|} \right) \vec{n} \]
where $\vec{n}$ is the normal vector
\[ \vec{n}=\frac{\nabla u}{|\nabla u|} .\]
The mean curvature is simply the scalar:
\[ H = \frac{-1}{d-1} \Div \frac{\nabla u}{|\nabla u|} . \]
By the previous corollary, for any $C^2$ function $u$ and for almost every level set $\{ u = t \}$ there exists a $\HH{n-1}$ negligible set $N$ such that $\od \{ u \geqslant t \} \smallsetminus N $ is a $C^2$ hypersurface and then its Willmore energy can be computed. Summing over all of these level sets and using the general coarea formula one will obtain

\begin{propo}[coarea formula] \label{coarea regulier}
For any $C^2$ function $u:\RR{d} \to \RR{}$, we have
\[ W_f(u) = \int_{-\infty}^{+\infty} W(\od \{u\geqslant t\} ) \ud t . \]
\end{propo}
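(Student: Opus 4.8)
The plan is to apply the general coarea formula (Theorem \ref{coarea general}) with $n=d$, $m=1$ and $f=u$, so that $J_u=|\nabla u|$, to the nonnegative function
\[ g(x) = \frac{1}{(d-1)^{d-1}} \left| \Div \frac{\nabla u}{|\nabla u|} \right|^{d-1}, \]
with the convention $g(x)=0$ wherever $|\nabla u(x)|=0$. This $g$ is measurable (continuous on the open set $\{|\nabla u|\neq 0\}$, zero elsewhere), and comparing with Definition \ref{def willmore fonction} the right-hand side of the coarea formula reads
\[ \int_{\RR{d}} g(x)\,|\nabla u(x)|\,\ud x = W_f(u). \]
Note that on a level set $\{u=t\}$ the scalar mean curvature computed above gives exactly $g=|H|^{d-1}$, which is precisely the integrand of $W$.

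First I would settle the integrability issue: Theorem \ref{coarea general} is stated for $g\in L^1$, whereas here $g$ is only nonnegative and measurable (and $W_f(u)$ may well be infinite). I would recover the formula for such $g$ by truncation, applying the $L^1$ statement to $g_k=\min(g,k)\,\ind{B(0,k)}$ and letting $k\to+\infty$: since $g_k\nearrow g$, monotone convergence on both the inner slice integrals and the bulk integral upgrades the identity to one valid in $[0,+\infty]$. This yields
\[ W_f(u) = \int_{-\infty}^{+\infty}\left( \int_{\{u=t\}} g\,\ud\HH{d-1} \right)\ud t. \]

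It then remains to identify the inner slice integral with $W(\od\{u\geqslant t\})$ for almost every $t$. Here I would use that $\od\{u\geqslant t\}\subseteq\{u=t\}$ and that the difference $\{u=t\}\smallsetminus\od\{u\geqslant t\}$ consists of points of the level set lying in the interior of $\{u\geqslant t\}$; each such point is a local minimum of $u$ and hence satisfies $\nabla u=0$. By the Federer--Morse--Sard corollary, for almost every $t$ the set $\{x \ | \ u(x)=t, \ \nabla u(x)=0\}$ is $\HH{d-1}$-negligible, so $\{u=t\}$ and $\od\{u\geqslant t\}$ coincide up to an $\HH{d-1}$-null set; moreover $\od\{u\geqslant t\}$ is, off a null set $N$, a $C^2$ hypersurface on which $g=|H|^{d-1}$. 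Consequently
\[ \int_{\{u=t\}} g\,\ud\HH{d-1} = \int_{\od\{u\geqslant t\}} |H|^{d-1}\,\ud\HH{d-1} = W(\od\{u\geqslant t\}), \]
and substituting this into the previous display gives the claim.

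The main obstacle is concentrated entirely in the critical set $\{\nabla u=0\}$: one must ensure simultaneously that these points are excised from the energy by the stated convention, that they carry no $\HH{d-1}$-mass on almost every level set (which is exactly the content of the Federer--Morse--Sard corollary), and that away from them the identification $\od\{u\geqslant t\}=\{u=t\}$ with the $C^2$ structure and the curvature formula $H=\frac{-1}{d-1}\Div\frac{\nabla u}{|\nabla u|}$ hold. Once these almost-everywhere-in-$t$ geometric facts are in place, the remainder is a direct substitution into the coarea formula.
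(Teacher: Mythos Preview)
Your proposal is correct and follows essentially the same approach as the paper: apply the general coarea formula with the integrand $g=\frac{1}{(d-1)^{d-1}}\left|\Div\frac{\nabla u}{|\nabla u|}\right|^{d-1}$, then use the Federer--Morse--Sard corollary to identify, for almost every $t$, the level set $\{u=t\}$ with $\partial\{u\geqslant t\}$ up to an $\HH{d-1}$-null set where $g$ equals $|H|^{d-1}$. The paper merely sketches this argument in the paragraph preceding the proposition, whereas you add the useful truncation-and-monotone-convergence step to handle the case $g\notin L^1$ that the paper glosses over.
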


\subsection{Functional context.}

We consider the $L^1$ relaxation of the Willmore energy in order to extend it to less regular objects and to add suitable functional properties. Let $\OO$ be a bounded open subset of $\RR{d}$.

\begin{Def}[functional Willmore energy] \label{willmore}
For $u\in L^1(\OO)$, its Willmore energy is:
\[ W(u) = \left\{ \begin{array}{ll}
W_f(u) & \qquad \text{if $u$ is $C^2_c(\OO)$} \\
+ \infty & \qquad \text{otherwise} \end{array} \right. \]
where $C^2_c(\OO)$ is the class of $C^2$ functions with compact support in $\OO$. We let $\WW$ be the lower semicontinuous envelop of $W$, i.e. $\WW$ is the relaxed energy:
\[ \WW (u,\OO) = \inf \left\{
\liminf_{n \to +\infty} W(u_n) \quad \big| \quad u_n \longrightarrow  u \quad \text{in } L^1(\OO) \right\} .\]
\end{Def}

As for functions, we define a relaxed energy for subsets of $\RR{d}$.

\begin{Def}[set convergence]
Let $(E_n)$ be a sequence of subsets of $\OO$. We say that $(E_n)$ converges to some subset $E$ in $L^1(\OO)$ if the sequence of characteristic functions $(\ind{E_n})$ converges to $\ind{E}$ in $L^1(\OO)$.
\end{Def}

\begin{rmq}
Recall the symmetric difference between two sets:
\[ E,E' \subset \RR{d} \ : \ E\Delta E' = (E\smallsetminus E' ) \cup ( E' \smallsetminus E) . \]
$(E_n)$ converges to a set $E$ in $L^1(\OO)$ if and only if $|\OO \cap (E_n \Delta E)| \longrightarrow  0$, with $|A|$ the Lebesgue measure on $\RR{d}$ of the subset $A$.
\end{rmq}

\begin{Def}[geometric Willmore energy]
Let $E$ be a subset of $\RR{d}$. We say $E$ is regular if $|E|>0$ and its boundary $\od E$ belongs to $C^2$, i.e. it is locally the graph of a $C^2$ function. We identify the Willmore energy of $E$ to the energy of its boundary. \\
This notion can be extended to every measurable set by relaxing it with respect to the $L^1$ topology. For a measurable set $E$, we define
\[ \WW (E,\OO) = \inf \left\{
\liminf_{n \to +\infty} W(E_n) \quad \big| \quad E_n \longrightarrow  E \quad \text{in } L^1(\OO) \text{ and } E_n \subset \OO \right\} . \]
\end{Def}

By convention $W(\emptyset)=0$ and then $\WW(E,\OO) = 0$ for every negligible set $E$. The relaxation of the Willmore energy has been the purpose of several papers, see \cite{Bellettini1993,Bellettini2004,Leonardi2009,Masnou2013a,Masnou2013}.

\subsection{Study objectives.}

Now we can explain our main goal and how we intend to reach it. Given an initial datum $u_0\in L^1(\RR{d})$, we investigate the convergence of a minimizing sequence of the energy $\WW$ in $L^1$ to a map $t\mapsto u_t \in L^1(\RR{d})$. The general problem is far too hard for now and we focus on some particular initial data: the class of radially non-increasing functions on $\OO$.

\begin{Def}[radially non-increasing function]
$u:\RR{d} \to \RR{}$ is a radially non-increasing function if there exists a function
$r:\RR{}\to\RR{+}$ such that, for almost every $t\in \RR{}$
\[ \{ u\geqslant t \} = \overline{B}(0,r(t))  \]
up to a Lebesgue negligible set. The function $r$ is called the radius function of $u$.
\end{Def}

As we work with functions that are defined almost everywhere, it is convenient to consider only purely radial functions $u(x)=u(|x|)$ and $B(0,r(t)) = \{ u\geqslant t \}$ with $B$ representing either the closed or open ball, depending on $u$ and $t$. All arguments hereafter are valid whenever the sets $B(0,r(t))$ and $ \{ u\geqslant t \}$ coincide up to a negligible set (with respect to the Lebesgue measure of $\RR{d}$). For the sake of simplicity, we will no longer mention this subtlety.

The definition above can be easily extended to a function in $L^1(\OO)$ with support in a ball contained in $\OO$. For the sake of simplicity, we will assume that $\OO = B(0,R_0)$.

We now describe the main direction of the paper. Starting from a radially non-increasing initial datum $u_0 \in L^1(\OO)$, the study of the minimizing movement requires the minimization of the functional
\[ \FF(u)=\WW(u,\OO) + \frac{1}{2\tau}||u-u_0||_{L^1}^2 . \]

As $u_0$ is invariant under rotations of $\RR{d}$, it is natural to think that a minimizer would also be invariant. It is true and the proof is as follows.
\begin{itemize}
\item Consider any candidate $u$ (possibly non radial) and its superlevel sets.
\item Replace each superlevel sets by balls with the same volume.
\item Build the function $\tilde{u}$ having these balls as superlevel sets.
\item Using the coarea formula, compute the Willmore energy of $u$ and $\tilde{u}$ with their superlevel sets.
\end{itemize}
If balls are minimizers of the geometric Willmore energy, we deduce an inequality between Willmore energies of the superlevel sets. This inequality is preserved by coarea formula and then
\[ \WW(\tilde{u},\OO) \leqslant \WW(u,\OO) .\]
By taking balls as superlevel sets, we get closer to the superlevel sets of the initial condition $u_0$. By preserving the volume of the superlevel sets, a classical lemma allows us to compute the $L^1$ norm of $u-u_0$ and $\tilde{u}-u_0$ using the volumes of (almost) every superlevel sets. That means that $\tilde{u}$ is closer from $u_0$ than $u$ is, and then \[ \FF(\tilde{u}) \leqslant \FF(u) .\]
Let us now point out which parts of the proof must be tackled more accurately. The first gap is that Willmore minimizers are not spheres but planes. We want to use compact hypersurfaces to have only spheres as minimizers. Therefore we restrict the problem to a bounded open set $\OO$. The second difficulty is that the relaxed Willmore energy does not satisfy the coarea formula in the general sense, unless assuming that the functions are nonnegative (see the next section). Overall, we will address the problem only for nonnegative and compactly supported functions, and the general problem remains open.

As a result of the above arguments, our minimizers are described only through their radius functions $r$. Studying them becomes easier and it is the subject of the third part. We demonstrate that, in fact, minimizers belong to truncations of the initial radius function $r_0$. Estimations on the best truncated function allow us to iterate the process in order to find a minimizing sequence. This sequence of functions consists of multiple truncations of the initial datum until vanishing. Then we prove that these truncations converge to an erosion of the initial datum, see Figure \ref{intro}. Some properties are finally obtained about the erosion speed.

\begin{rmq}
In this study, the fidelity term in $\FF$ of the minimizing movement is given by a $L^1$ norm:
\[ ||u-u_0||_{L^1} . \]
Using it with the indicator functions $u=\ind{E}$ and $u_0=\ind{E_0}$ yields
\[ d_{L^1} (E,E_0) = |E\Delta E_0| . \]
Although natural, the distance $d_{L^1}$ is actually not suitable for mimicking the classical mean curvature flow (see \cite[chapter 8]{Braides2013}). For instance, the minimizing sequence starting from a circle in the plane is stationary for $\tau$ small enough. To avoid this phenomenon, one rather uses the distance
\[ \delta(E,E_0)= \int_{E \Delta E_0} |d_{E_0}(x)|\ud x \]
with $d_E$ the classical distance or the signed distance to the set $E$ (see \cite{Chambolle2004} and \cite{Luckhaus1995}). \\
In our case, we keep the distance $d_{L^1}$ for two reasons. The first one is that there is no easy coarea formula, equivalent to Lemma \ref{ev-ga} and giving a suitable distance between two functions in terms of the $\delta$-distance between their superlevel sets. The second one is that we work with radial functions, i.e. functions with balls as superlevel sets. The (geometric) scale invariant Willmore energy is minimal for every ball, no matter the radius. Whatever the fidelity term we use on superlevel sets, a minimizer will always be the initial ball. Thus the flow of each superlevel set is stationary.\\
One point of this paper is that even if the superlevel set Willmore flow is stationary, the function flow is non trivial.
\end{rmq}

\section*{Notations}

\begin{tabular}{ll}
$|.|$ & either the absolute value of a real number, \\
 & \qquad the modulus of a vector or the Lebesgue measure on $\RR{d}$ \\
$\alpha_d$ & volume of the unit ball of $\RR{d}$ \\
$\omega_d$ & area of the unit sphere of $\RR{d}$ \\
$A\Delta B$ & symmetric difference between the sets $A$ and $B$ \\
$\HH{k}$ & $k$-dimensional Hausdorff measure
\end{tabular}

\section*{Acknowledgement}

My special thanks go to Simon Masnou for all the stimulating discussions during the preparation of this paper.

\section{Coarea formula for Willmore energy.} \label{partie coarea}

In this section, we will discuss the coarea formula for Willmore energy (non necessarily scale invariant).
Let $p\in [1,+\infty[$, we extend Definitions \ref{def willmore ensemble} and \ref{def willmore fonction} to curvature with power $p$, with the same conventions.

\begin{Def}[$p$-Willmore energies.]
Let $u$ be a $C^2(\RR{d},\RR{})$ function and $E$ be a $C^2$ subset of $\RR{d}$. We set
\[ W_p(u) = \frac{1}{(d-1)^p} \mathlarger\int_{\RR{d}} |\nabla u| \left|\Div \frac{\nabla u}{|\nabla u|}\right|^p \ud x \] and
\[ W_p(E) = \int_{\od E} |H|^p \ud \HH{d-1} .\]
We define also the relaxation of these energies for every function $u\in L^1(\OO)$ and every Borel set $E\subset \OO$:
\[ \Wp (u,\OO) = \inf \left\{
\liminf_{n \to +\infty} W_p(u_n) \quad \big| \quad u_n \in C^2_c(\OO),
	\ u_n \longrightarrow  u \quad \text{in } L^1(\OO) \right\}  \] and
\[ \Wp (E,\OO) = \inf \left\{
\liminf_{n \to +\infty} W_p(E_n) \quad \big| \quad E_n \subset \OO, \ \od E \text{ in } C^2, \ E_n \longrightarrow  E \quad \text{in } L^1(\OO)  \right\} . \]
\end{Def}

\noindent Notice that $W_{d-1}=W$ with the previous notation \ref{willmore}.

The purpose here is to prove the following coarea formula for $\Wp$:
\[ \int_{-\infty}^{+\infty} \Wp(\{u\geqslant t\},\OO) \ud t \quad = \quad \Wp(u,\OO) .\]
In general settings such a formula is not valid, see \cite{Masnou2013}. In our special case of radially non-increasing functions, we will prove that it is true.

We will first state four lemmas whose proofs are given in the appendix. The first one is a coarea inequality proved in \cite[Theorem 4 and Remark 2]{Ambrosio2003}.
\begin{lem}[coarea inequality] \label{inegalite}
Let $u\in L^1(\OO)$, we have
\[ \int_{-\infty}^{+\infty} \Wp(\{u\geqslant t\},\OO) \ud t \quad \leqslant \quad \Wp(u,\OO) .\]
\end{lem}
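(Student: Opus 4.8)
The goal is to establish the coarea inequality
\[ \int_{-\infty}^{+\infty} \Wp(\{u\geqslant t\},\OO) \ud t \quad \leqslant \quad \Wp(u,\OO) \]
for arbitrary $u\in L^1(\OO)$. My plan is to work directly from the definition of the relaxed energy $\Wp(u,\OO)$ as an infimum over approximating sequences, reducing the claim to the smooth case.

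First I would observe that if $\Wp(u,\OO)=+\infty$ there is nothing to prove, so I may assume the right-hand side is finite and fix a recovery sequence $u_n\in C^2_c(\OO)$ with $u_n\to u$ in $L^1(\OO)$ and $W_p(u_n)\to\Wp(u,\OO)$ (passing to a subsequence realizing the liminf). The heart of the argument is the smooth coarea statement: for each fixed $n$, Proposition \ref{coarea regulier} (in its $p$-version, which extends verbatim since the only input is the general coarea formula of Theorem \ref{coarea general} applied level by level) gives
\[ W_p(u_n) = \int_{-\infty}^{+\infty} W_p(\od\{u_n\geqslant t\})\ud t. \]
Thus the smooth energy already decomposes exactly over superlevel sets; the work is to pass this identity to the limit as an inequality for the relaxed energies.

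The key mechanism is lower semicontinuity of the geometric relaxed energy $t\mapsto\Wp(\{u\geqslant t\},\OO)$ combined with Fatou's lemma. Since $u_n\to u$ in $L^1(\OO)$, by the BV/measure-theoretic coarea structure the superlevel sets converge for almost every level: more precisely, $\|\ind{\{u_n\geqslant t\}}-\ind{\{u\geqslant t\}}\|_{L^1(\OO)}\to 0$ for a.e.\ $t$ (after extracting a subsequence, using that $\int_{-\infty}^{+\infty}\|\ind{\{u_n\geqslant t\}}-\ind{\{u\geqslant t\}}\|_{L^1}\ud t = \|u_n-u\|_{L^1}\to 0$ by the layer-cake identity). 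For each such $t$, the smooth sets $\{u_n\geqslant t\}$ form an admissible competitor in the definition of $\Wp(\{u\geqslant t\},\OO)$, whence
\[ \Wp(\{u\geqslant t\},\OO) \leqslant \liminf_{n\to+\infty} W_p(\od\{u_n\geqslant t\}). \]
Integrating in $t$ and applying Fatou's lemma to the nonnegative integrands yields
\[ \int_{-\infty}^{+\infty}\Wp(\{u\geqslant t\},\OO)\ud t \leqslant \int_{-\infty}^{+\infty}\liminf_n W_p(\od\{u_n\geqslant t\})\ud t \leqslant \liminf_n \int_{-\infty}^{+\infty} W_p(\od\{u_n\geqslant t\})\ud t, \]
and the right-hand side equals $\liminf_n W_p(u_n)=\Wp(u,\OO)$ by the smooth coarea identity and the choice of recovery sequence.

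The main obstacle I anticipate is the measurability and a.e.\ level-set convergence step: one must justify that, for almost every $t$, the smooth sets $\{u_n\geqslant t\}$ genuinely converge to $\{u\geqslant t\}$ in $L^1(\OO)$ and are legitimate competitors, and that the map $t\mapsto\Wp(\{u\geqslant t\},\OO)$ is measurable so that Fatou applies. The layer-cake identity $\|f-g\|_{L^1}=\int \|\ind{\{f\geqslant t\}}-\ind{\{g\geqslant t\}}\|_{L^1}\ud t$ handles the convergence after a diagonal extraction, while measurability follows from writing $\Wp(\{u\geqslant t\},\OO)$ as an infimum of measurable quantities. A secondary technical point is confirming that the boundaries $\od\{u_n\geqslant t\}$ are $C^2$ for a.e.\ $t$, which is exactly the content of the Federer--Morse--Sard corollary already available above, so the smooth $p$-Willmore energy $W_p(\od\{u_n\geqslant t\})$ is well defined for almost every level.
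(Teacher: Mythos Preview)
Your proposal is correct and follows essentially the same route as the paper's own proof: take a recovery sequence $u_n\in C^2_c(\OO)$, apply the smooth coarea identity (Proposition~\ref{coarea regulier}), use the layer-cake formula (Lemma~\ref{ev-ga}) to get a.e.\ convergence of superlevel sets along a subsequence, bound $\Wp(\{u\geqslant t\},\OO)$ by $\liminf_n W_p(\{u_n\geqslant t\})$ from the definition of the relaxed energy, and conclude with Fatou. The paper invokes Sard's lemma where you invoke the Federer--Morse--Sard corollary for the a.e.\ regularity of level sets, and it does not dwell on the measurability issue you flag, but otherwise the arguments coincide.
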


\noindent The second one is a classical lemma based on Cavalieri formula.

\begin{lem}[$L^1$ norm] \label{ev-ga}
For measurable functions $u,v : \RR{d} \mapsto \RR{}$ we have
\[ \int_{\RR{d}} |u-v|\ud x =
\int_{-\infty}^{+\infty} |\{u\geqslant t \} \Delta \{ v\geqslant t\}| \ud t \]
with $\{u\geqslant t\}$ the $t$-height superlevel set of $u$.
\end{lem}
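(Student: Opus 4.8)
The plan is to derive the identity from a pointwise (layer--cake) computation in the variable $x$, followed by an application of Tonelli's theorem. The starting remark is that a point lies in the symmetric difference of two sets exactly when its membership indicators disagree, so for every $t$
\[ \ind{\{u\geqslant t\}\Delta\{v\geqslant t\}}(x)=\bigl|\ind{\{u\geqslant t\}}(x)-\ind{\{v\geqslant t\}}(x)\bigr| . \]
First I would fix $x\in\RR{d}$ and read the left-hand side as a function of $t$: since $x\in\{u\geqslant t\}$ if and only if $t\leqslant u(x)$, the set of levels $t$ at which $x$ belongs to $\{u\geqslant t\}$ is the half-line $(-\infty,u(x)]$, and likewise $(-\infty,v(x)]$ for $v$.

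The set of $t$ contributing to the $x$-slice is therefore the symmetric difference of these two half-lines, namely the bounded interval with endpoints $u(x)$ and $v(x)$, whose length is exactly $|u(x)-v(x)|$. This gives the pointwise identity
\[ \int_{-\infty}^{+\infty}\ind{\{u\geqslant t\}\Delta\{v\geqslant t\}}(x)\,\ud t=|u(x)-v(x)| , \]
valid for every $x$ and symmetric in $u$ and $v$, so no ordering of $u(x)$ and $v(x)$ need be assumed.

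It then remains to integrate over $x\in\RR{d}$ and exchange the two integrations:
\[ \int_{\RR{d}}|u-v|\,\ud x=\int_{\RR{d}}\int_{-\infty}^{+\infty}\ind{\{u\geqslant t\}\Delta\{v\geqslant t\}}(x)\,\ud t\,\ud x=\int_{-\infty}^{+\infty}|\{u\geqslant t\}\Delta\{v\geqslant t\}|\,\ud t . \]
The only point requiring care, and hence the main obstacle, is justifying this exchange. I would invoke Tonelli's theorem after checking the joint measurability of $(x,t)\mapsto\ind{\{u\geqslant t\}\Delta\{v\geqslant t\}}(x)$ on $\RR{d}\times\RR{}$: the sets $\{(x,t)\mid u(x)\geqslant t\}$ and $\{(x,t)\mid v(x)\geqslant t\}$ are measurable because $(x,t)\mapsto u(x)-t$ and $(x,t)\mapsto v(x)-t$ are measurable, and the integrand is a Boolean combination of their indicators. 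Since the integrand is nonnegative, Tonelli applies with no integrability assumption on $u-v$, both sides taking values in $[0,+\infty]$, which completes the proof.
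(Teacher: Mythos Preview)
Your proof is correct and follows essentially the same approach as the paper: fix $x$, recognize that the indicator of the symmetric difference in $t$ has integral $|u(x)-v(x)|$, then integrate over $x$ and swap the order of integration. Your version is in fact slightly more careful, invoking Tonelli rather than Fubini and explicitly verifying the joint measurability of the integrand.
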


\noindent The third one is only required for technical purpose.

\begin{lem}[Reattachment] \label{recollement}
Let $a,b,\alpha,\beta \in \RR{}$ such that $a,b<0$. There exists a decreasing function $f\in C^2([0,1])$ such that
\[ \begin{array}{cc}
f(0)=1 & f(1)=0 \\ f'(0)=a & f'(1)=b \\ f''(0)=\alpha & f''(1)=\beta . \end{array} \]
\end{lem}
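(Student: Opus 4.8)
The plan is to reduce the problem to a construction for the derivative. Writing $\phi=-f'$, the requirements become: find $\phi\in C^1([0,1])$ with $\phi>0$ on $[0,1]$, prescribed endpoint data
\[ \phi(0)=-a,\quad \phi(1)=-b,\quad \phi'(0)=-\alpha,\quad \phi'(1)=-\beta, \]
where $-a,-b>0$ because $a,b<0$, together with the mass condition
\[ \int_0^1 \phi(s)\,\ud s = 1. \]
Given such a $\phi$, I set $f(x)=1-\int_0^x \phi(s)\,\ud s$. Then $f\in C^2$, $f'=-\phi<0$ so $f$ is strictly decreasing, $f(0)=1$, and $f(1)=1-\int_0^1\phi=0$; the prescribed first and second derivatives at the endpoints follow from $f'=-\phi$ and $f''=-\phi'$. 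Thus everything reduces to producing a positive $C^1$ function with a fixed boundary $1$-jet and prescribed integral equal to $1$.

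To meet the integral constraint I would use an intermediate value argument, exploiting that the admissible boundary data are preserved by convex combinations. If $\phi_0,\phi_1$ are positive and both realize the four endpoint conditions above, then for every $s\in[0,1]$ the function $\phi_s=(1-s)\phi_0+s\phi_1$ is again positive and still satisfies those affine conditions, while $s\mapsto \int_0^1\phi_s=(1-s)\int_0^1\phi_0+s\int_0^1\phi_1$ is continuous. Hence it suffices to exhibit two positive functions with the correct boundary $1$-jet, one with integral strictly larger than $1$ and one with integral strictly smaller than $1$; the intermediate value theorem then yields $s^\ast$ with $\int_0^1\phi_{s^\ast}=1$, and $\phi=\phi_{s^\ast}$ is the desired function.

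The large-mass function $\phi_0$ is immediate: take the cubic Hermite interpolant $h$ of the four endpoint conditions and set $\phi_0=h+c\,\eta$ with $\eta(x)=x^2(1-x)^2$ (which vanishes together with $\eta'$ at $0$ and $1$, so $\phi_0$ shares the boundary $1$-jet of $h$) and $c$ large. Since $h(0)=-a$ and $h(1)=-b$ are positive, $h$ is positive near the endpoints, while $\eta>0$ on $(0,1)$ makes $\phi_0$ positive on the compact interior once $c$ exceeds $\max|h|/\min_{[\delta,1-\delta]}\eta$; moreover $\int_0^1\phi_0\to+\infty$ as $c\to+\infty$, so $\int_0^1\phi_0>1$ for $c$ large. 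The small-mass function $\phi_1$ is the delicate point and is the \emph{main obstacle}: I need a positive function with the same, possibly large, boundary values $-a,-b$ and boundary slopes $-\alpha,-\beta$, yet arbitrarily small integral. I would build it by a boundary-layer construction: on two thin intervals $[0,w]$ and $[1-w,1]$ let $\phi_1$ descend monotonically from its boundary value to a small plateau height, and on the remaining middle interval keep $\phi_1$ equal to a small positive constant. Each piece stays strictly positive, since a monotone descent between two positive values never vanishes, and the total integral is bounded by the two layer contributions plus the plateau area, all tending to $0$ as $w$ and the plateau height tend to $0$; in particular $\int_0^1\phi_1<1$ can be forced.

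The only genuine difficulty lies in the $C^1$ matching of this last construction: the slopes $-\alpha,-\beta$ are arbitrary and must be reconciled with a steep monotone descent over a thin layer without losing positivity. I would handle this by first using a short sub-layer to absorb the prescribed slope, bending $\phi_1$ back to zero slope while staying positive, and then a monotone $C^1$ descent to the plateau; choosing the layer width small enough keeps the whole function positive and of small mass. All remaining verifications, namely the $C^1$ smoothness of the convex combination, the endpoint identities for $f$, and strict monotonicity, are routine.
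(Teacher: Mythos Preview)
Your proposal is correct. Both you and the paper reduce the problem to constructing $-f'$ as a positive $C^1$ function with the prescribed boundary $1$-jet and total integral $1$, and then integrating. The difference lies in how the integral constraint is enforced. The paper builds a single non-positive $g=f_0'$ that is identically zero on a large middle interval and matches the boundary data by explicit cubic polynomials on two thin layers $[0,\varepsilon]$ and $[1-\varepsilon,1]$; for $\varepsilon$ small this makes $\int_0^1 g$ close to $0$, hence $f_0(1)=1+\int_0^1 g>0$, and then subtracts $A\int_0^t\varphi$ with $\varphi$ a smooth bump of unit integral vanishing to first order at the endpoints and $A=f_0(1)$, which simultaneously forces $f(1)=0$ and supplies strict negativity of $f'$ on the middle portion. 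You instead produce two strictly positive functions with the same boundary $1$-jet, one of large mass (Hermite interpolant plus a large multiple of $x^2(1-x)^2$) and one of small mass (boundary-layer construction), and hit the target integral by the intermediate value theorem on convex combinations. The paper's route is more direct and fully explicit, with no IVT needed; yours is slightly more conceptual but equally valid. It is worth noting that your ``main obstacle'', the small-mass $\phi_1$, is essentially the same object as the paper's $-g$: a function concentrated near the endpoints with the right boundary data. The paper simply allows it to vanish in the middle and lets the bump correction restore strict positivity, whereas you insist on strict positivity from the outset, which is what forces the extra care in your boundary-layer construction.
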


\noindent The last one is the equality of $\Wp$ and $W_p$ on regular subsets of $\OO$.

\begin{lem}[Relaxation on regular sets] \label{regulier}
Let $p>1$ and let $E$ be a $C^2$ compact subset included in $\OO$. Then
\[ \Wp(E,\OO) = W_p(E) .\]
\end{lem}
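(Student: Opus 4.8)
The plan is to prove the two inequalities separately. The inequality $\Wp(E,\OO) \leqslant W_p(E)$ is immediate: since $E$ is a $C^2$ compact subset of $\OO$, the constant sequence $E_n = E$ is admissible in the definition of $\Wp(E,\OO)$, and it gives $\liminf_n W_p(E_n) = W_p(E)$. All the content therefore lies in the reverse inequality $\Wp(E,\OO) \geqslant W_p(E)$, which is a lower semicontinuity statement: for every admissible sequence $(E_n)$ with $E_n \to E$ in $L^1(\OO)$ and $\partial E_n$ of class $C^2$, one must show $\liminf_n W_p(E_n) \geqslant W_p(E)$.

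To establish this, I would first discard the trivial case where $\liminf_n W_p(E_n) = +\infty$, and pass to a (non relabelled) subsequence along which $W_p(E_n) \to L := \liminf_n W_p(E_n) < +\infty$, so that $\sup_n W_p(E_n) < +\infty$. To each smooth boundary $\partial E_n$ I attach the multiplicity-one integral $(d-1)$-varifold $V_n$ carried by $\partial E_n$; its generalized mean curvature is the classical vector $\vec{H}_n$, and the energy bound reads $\int |\vec{H}_n|^p \, \ud\|V_n\| \leqslant C$. The goal is to extract a varifold limit $V_n \rightharpoonup V$, to identify $V$ with (at least) the varifold of $\partial E$ using the $L^1$ convergence of the sets, and to conclude via the lower semicontinuity of the $p$-curvature functional, which holds precisely because $p>1$:
\[ W_p(E) = \int_{\partial E} |\vec{H}|^p \, \ud \HH{d-1} \leqslant \int |\vec{H}_V|^p \, \ud\|V\| \leqslant \liminf_n \int |\vec{H}_n|^p \, \ud\|V_n\| = L. \]

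The identification of the limit is comparatively soft, once masses are under control. Since for smooth $E_n$ the weight measure $\|V_n\|$ is exactly $\HH{d-1}$ on $\partial E_n$, and $D\ind{E_n} \rightharpoonup D\ind{E}$ while $\|V_n\| \rightharpoonup \|V\|$, the lower semicontinuity of the total variation yields that $\HH{d-1}$ on $\partial E$ is dominated by $\|V\|$; thus $\partial E$ is recovered inside $V$ with multiplicity at least one and with the correct tangent planes. By the locality of the generalized mean curvature for integral varifolds, the curvature of $V$ restricted to the smooth portion $\partial E$ coincides with the classical $\vec{H}$, which justifies the first inequality in the displayed chain.

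The main obstacle is the compactness step, namely producing the varifold limit $V$ at all. Allard-type compactness requires uniform \emph{local mass} (perimeter) bounds on $\partial E_n$, whereas the $L^1$ convergence $E_n \to E$ controls only volumes, and the bound $\int |\vec{H}_n|^p < C$ does not by itself control $\HH{d-1}(\partial E_n)$. I would overcome this using the monotonicity formula available for varifolds with $L^p$ generalized mean curvature, which provides density lower bounds and, together with the boundedness of $\OO$ and the constraint $E_n \subset \OO$, prevents the boundary mass from concentrating or escaping: away from $\partial E$ the regions $E_n$ and $\OO \setminus E_n$ keep a definite volume by the $L^1$ convergence, confining the surface mass to a neighbourhood of $\partial E$. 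This is the delicate point, where the relaxation and compactness results of the cited literature are invoked; once the uniform local mass bound is secured, the remaining steps follow from the standard varifold lower-semicontinuity machinery described above.
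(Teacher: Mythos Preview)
Your overall architecture matches the paper's exactly: the easy inequality via the constant sequence, the varifold compactness for the hard direction, lower semicontinuity of the $p$-curvature under varifold convergence, the identification $\partial E \subset \operatorname{supp} V$ with multiplicity at least one via lower semicontinuity of the BV perimeter, and the locality of the generalized mean curvature to recover the classical $H$ on $\partial E$. So structurally you are on the same track.

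The one genuine gap is precisely the point you flag as ``delicate'': obtaining the uniform mass bound $\sup_n \HH{d-1}(\partial E_n) < +\infty$ needed for Allard compactness. Your proposed mechanism --- monotonicity gives density lower bounds, and the $L^1$ convergence confines the boundaries near $\partial E$ --- does not produce a mass \emph{upper} bound: density lower bounds go the wrong way, and being confined to a neighbourhood of $\partial E$ says nothing about total area. The paper closes this gap with a short direct computation: for a smooth closed hypersurface inside $\OO \subset B(0,M)$, apply the tangential divergence theorem to the position field $V(x)=x$,
\[
(d-1)\,P(E_n) = \int_{\partial E_n} \Div_{\partial E_n} V \,\ud\HH{d-1}
= -\int_{\partial E_n} V\cdot \overrightarrow{H}_n \,\ud\HH{d-1}
\leqslant M\,P(E_n)^{1/q}\,W_p(E_n)^{1/p},
\]
with $1/p+1/q=1$, hence $P(E_n) \leqslant \bigl(M/(d-1)\bigr)^p W_p(E_n)$. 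This immediately converts your energy bound into a perimeter bound, valid for every $p>1$, and the rest of your argument then goes through verbatim.
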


Now we state and prove the main result of this section.

\begin{thm} \label{coarea}
Let $v: \RR{d} \to \RR{}$ be a radially non-increasing function which is non-negative, bounded and compactly supported in $\OO=B(0,R_0)$. For $p>1$, we have
\[ \int_{-\infty}^{+\infty} \Wp(\{v\geqslant t\},\OO) \ud t \quad = \quad \Wp(v,\OO) .\]
\end{thm}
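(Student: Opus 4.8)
The inequality ``$\leqslant$'' in the statement is exactly the coarea inequality of Lemma~\ref{inegalite}, so the whole content is the reverse bound $\Wp(v,\OO)\leqslant\int_{-\infty}^{+\infty}\Wp(\{v\geqslant t\},\OO)\,\ud t$. By the very definition of the relaxed energy this reduces to exhibiting a \emph{recovery sequence}: functions $v_n\in C^2_c(\OO)$ with $v_n\to v$ in $L^1(\OO)$ and $\limsup_n W_p(v_n)\leqslant\int_{-\infty}^{+\infty}\Wp(\{v\geqslant t\},\OO)\,\ud t$. I would first make the right-hand side completely explicit. Writing $M=\sup v$ and letting $r$ be the radius function, the superlevel set $\{v\geqslant t\}$ is the closed ball $\overline B(0,r(t))$ for $0<t\leqslant M$, fills $\OO$ (up to a null set, hence carries no interior boundary and zero relaxed energy) for $t\leqslant 0$, and is empty for $t>M$. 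Since $v$ is compactly supported we have $r(t)<R_0$, so each ball is $C^2$ and compactly contained in $\OO$; Lemma~\ref{regulier} then gives $\Wp(\overline B(0,r(t)),\OO)=W_p(\overline B(0,r(t)))=\omega_d\,r(t)^{\,d-1-p}$, because the sphere of radius $\rho$ has constant scalar mean curvature $1/\rho$ and area $\omega_d\rho^{d-1}$. Hence the target is
\[ \int_{-\infty}^{+\infty}\Wp(\{v\geqslant t\},\OO)\,\ud t \;=\; \int_0^{M}\omega_d\, r(t)^{\,d-1-p}\,\ud t, \]
and if this integral is $+\infty$ the theorem follows at once from Lemma~\ref{inegalite}; so I assume it finite.

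The construction is now a one–dimensional problem, because for a \emph{radial} competitor the $W_p$–analogue of the smooth coarea formula (Proposition~\ref{coarea regulier}) reads
\[ W_p(v_n)=\int_0^{M_n}\omega_d\, r_n(t)^{\,d-1-p}\,\ud t,\qquad M_n=\max v_n, \]
each level set $\od\{v_n\geqslant t\}$ being a single sphere of radius $r_n(t)$. Writing $v(x)=g(|x|)$ with $g$ non-increasing, I would build $v_n(x)=g_n(|x|)$ from a strictly decreasing profile $g_n\in C^2$ obtained by: (a) a mollification of $g$ on a \emph{bulk} region where $g$ stays bounded away from $M$ and from $0$; (b) near the origin, a \emph{flat cap} enforcing $g_n'(0)=0$ (needed for $v_n$ to be $C^2$ at $0$) of the form $g_n(\rho)=M_n-c_n\rho^{2k}$, with the vanishing order $2k$ chosen so large that $p<d-1+2k$ and with cap height $h_n\to 0$; (c) near the edge of the support, a $C^2$ descent reaching $0$ with support still inside $\OO$. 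The $C^2$ gluings between these three pieces are furnished by the Reattachment Lemma~\ref{recollement}, which prescribes the matching values and first and second derivatives.

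It then remains to estimate $W_p(v_n)=\int_0^{M_n}\omega_d r_n^{\,d-1-p}\,\ud t$ by splitting the integral over the three regions. On the bulk, $r_n\to r$ with both bounded between two positive constants, so dominated convergence gives convergence to the corresponding part of $\int_0^M\omega_d r^{\,d-1-p}\,\ud t$. On the descent, the radii stay close to the support radius $R_*<R_0$ while the relevant range of levels has vanishing length, so that contribution tends to $0$. On the cap, the change of variables $t=g_n(\rho)$ turns the contribution into $\int_0^{\rho_n}\omega_d\rho^{\,d-1-p}\,|g_n'(\rho)|\,\ud\rho$, which the choice $p<d-1+2k$ keeps convergent and which tends to $0$ once $h_n$ (equivalently $c_n$) is sent to $0$ fast enough. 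Finally $v_n\to v$ in $L^1(\OO)$ follows from Lemma~\ref{ev-ga}, since $v_n$ and $v$ differ only on regions of vanishing Lebesgue measure (near the origin) or by a vanishing amount (the mollification and the descent). Passing to the $\limsup$ and invoking the definition of $\Wp$ yields $\Wp(v,\OO)\leqslant\int_0^M\omega_d r^{\,d-1-p}\,\ud t$, which is the missing inequality.

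The delicate point — and the step I expect to fight with — is the cap at the origin. For $p>d-1$ the mean curvature $1/\rho$ blows up on the small spheres near the top, and a naive quadratic cap can even have \emph{infinite} $W_p$ energy when $p\geqslant d+1$; the whole difficulty is to choose the vanishing order $2k$ and the height $h_n$ so that the added cap is $C^2$, can be reattached to the bulk through Lemma~\ref{recollement}, and still has energy tending to $0$ — ideally dominated by the (convergent) tail of $\int_0^M\omega_d r^{\,d-1-p}\,\ud t$. Controlling this top contribution uniformly over the full range $p>1$, simultaneously with the $C^2$ matching, is where monotonicity of $r$ and the finiteness assumption must be used carefully.
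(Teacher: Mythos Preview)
Your approach is essentially the paper's: build a $C^2$ radial recovery sequence with a high-order cap at the origin, a mollified bulk, and a smooth descent to zero, then evaluate $W_p$ level by level via Proposition~\ref{coarea regulier} and Lemma~\ref{recollement}. The only structural difference is that you parametrize by the profile $g(\rho)$ while the paper works with its inverse, the radius function $r(t)$; your cap $M_n-c_n\rho^{2k}$ is exactly the paper's tail $r(t)=(b-t)^q$ with $q=1/(2k)$, and your descent at the edge of the support corresponds to the paper's second $t^q$ tail near $t=0$.

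One point deserves more care than you give it. On the bulk you invoke dominated convergence with $r_n$ and $r$ ``bounded between two positive constants'', but if $r(t)\to 0$ as $t\to M^-$ this lower bound fails once the cap height $h_n\to 0$, and for $p>d-1$ the integrand $r^{\,d-1-p}$ is then unbounded. The paper sidesteps this by mollifying $r$ with a \emph{one-sided} kernel supported in $[0,1]$: this forces $\tilde r_n\geqslant r$ and makes $(\tilde r_n)$ non-increasing in $n$, so that \emph{monotone} convergence applies to $1/r_n^{\,p-d+1}$ on the bulk interval (Step~6, integral $I_2$). An equivalent fix in your setup is to choose the mollifier so that $g_n\geqslant g$, whence $r_n\geqslant r$, and then dominate by the integrable $r^{\,d-1-p}$ itself. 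Your closing remark that ``monotonicity of $r$ and the finiteness assumption must be used carefully'' is exactly right --- this is precisely where they enter.
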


\noindent For such function $v$, we denote by $r:\RR{} \to \RR{+}$ the radius function defined by the relation
\[ \{v\geqslant t\} = B(0,r(t)).\]
Note that $r(0)<+\infty$ is the radius of the support of $v$, $r(t)>0$ for all $t$ in $[0,\sup v[$ and $r(t)=0$ for $t>\sup v$. Using Lemma \ref{regulier} and the fact that the mean curvature of a ball with radius $r$ is $H=1/r$, we have
\[ \Wp(\{v\geqslant t\},\OO) = \omega_d r(t)^{d-1-p}, \]
and then
\[ \Wp(v,\OO) = \omega_d \int_0^{\sup v} r(t)^{d-1-p} \ud t \]
with $\omega_d$ the area of the unit sphere of $\RR{d}$.

\begin{proof}
Let $r$ be the previous radius function, we denote $b=\sup v < +\infty$.

\textbf{ Particular case. } \\
We assume some hypotheses on $r$ in order to have a regular enough $v$ such that the classical coarea formula applies on $W_p$.
Suppose $r$ is a $C^2$ decreasing function on $]0,b[$ and there exist $\ee>0$ and $q \in ]0,1/4[$ such that
\[ \forall t \in [b-\ee,b], \quad r(t) = (b-t)^q \]
\[ \forall t \in [0,\ee], \quad r(t) = r(0)-t^q. \]
In that case, $r$ is one-to-one and its derivative does not vanish on $]0,b[$. Then we have $v(x)=r^{-1}(|x|)$ and the function $v$ is $C^2$ on $B(0,r(0)) \smallsetminus \{ 0 \}$. However, for $|x|<r(b-\ee)$, we have $v(x) = b-|x|^{1/q} = b-(|x|^2)^{1/2q}$, which is $C^2$ on $x=0$ because $1/2q > 2$. In the same way, for $r(\ee) < |x| < r(0)$, we have $v(x) = (r(0)-|x|)^{1/q}$ which links to the zero function with a $C^2$ regularity on $\RR{d} \smallsetminus B(0,r(0))$. This allows $v \in C^2_c(\OO)$ and using the coarea formula (proposition \ref{coarea regulier}), we get \[ W_p(v) = \int_0^{+\infty} W_p(\{ v \geqslant t\}) \ud t .\]

\textbf{General case.}  \\
In general $r$ is a non increasing function, see Figure \ref{graph} for a typical radius function example.

\begin{figure}[!ht]

\begin{center}
\begin{tikzpicture}[scale=0.75]
\draw (-4,0) node[below left] {$0$} ;
\draw [->] (-5,0) -- (7,0) node[below right] {$t$} ;
\draw [->] (-4,-1) -- (-4,4) node[left] {$r$} ;
\draw (-5,3) -- (7,3) node[near end, above] {$r(0)$} ;
\draw [thick] (-4.5,3) -- (-3,3) 
plot [domain=-3:1,samples=100] (\x,{3+(2/pi)*(rad(atan(-1*\x))-rad(atan(3)))})
plot [domain=1:3/2] (\x,{sqrt(19/10-\x)*sqrt(10)/2}) -- (2,1)
plot [domain=2:5,samples=100] (\x,{(5-\x)^2/9}) node {$|$} -- (6,0) ;
\draw [thick,dotted] (1,1.7) -- (1,1.5) ;
\draw (5,-1/2) node {$b$} ;
\end{tikzpicture}
\end{center}

\caption{Graph of the function $r$.}
\label{graph}

\end{figure}
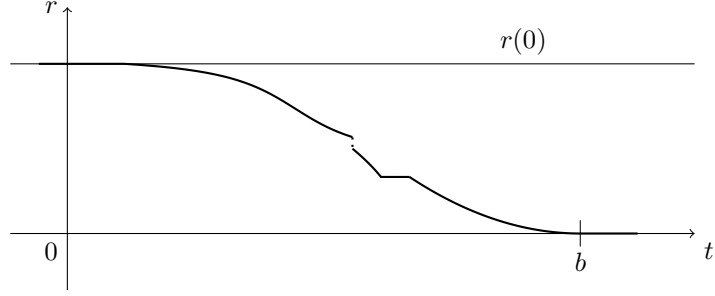

\noindent We will approximate the function $r$ with a regular enough sequence to use the previous case, and with a good enough convergence to pass to the limit.
We obtain regularity on $r$ using a well chosen convolution. Indeed, the Willmore energy of a ball with radius $r$ is (up to a constant) $r^{d-1-p}$ and, when $d-1-p<0$, this means the energy sequence is the integral of the inverse-like functions $t \mapsto 1/r_n(t)^{|d-1-p|}$. For the energy to converge, we will use the monotone convergence theorem, this is why we need a non increasing sequence $(r_n)$. In the case $d-1-p\geqslant 0$, the convergence is straightforward, because the sequence is bounded and with compact support. Finally, we adjust this $C^2$ sequence of functions to add the $t^q$ tails.

\textbf{Step 1: } let $\rho : \RR{} \to \RR{}$ be a mollifier such that $\rho$ is $C^2$, $\rho > 0$ on $]0,1[$, supp$(\rho) = [0,1]$ and $\displaystyle \int_{\RR{}} \rho (t)\ud t = 1$. Fix $R \in ]r(0),R_0[$ and extend $r$ by $R$ for $t<0$ and by $0$ for $t>b$ (taking such a $R$ will be important later in order to have decreasing  -- and not only nonincreasing -- functions $r_n$). For $n\in \NN^*$ we define $\rho_n(t) = n\rho(nt)$ and $\tilde{r}_n = r*\rho_n$. As $r$ is bounded and $\rho$ is compactly supported, $\tilde{r_n}$ is $C^2$ on $\RR{}$. Moreover, we have
\[ \tilde{r}_n(t) = \int_{\RR{}}nr(t-s)\rho(ns)\ud s = \int_0^{1/n}nr(t-s)\rho(ns)\ud s = \int_0^1 r(t-s/n)\rho(s) \ud s . \]

Thanks to that formula, we can prove that $(\tilde{r}_n)$ is a non increasing sequence of non increasing functions. For all $t \in \RR{}$ and for all $s \in [0,1]$,
\[ t-\frac{s}{n} \leqslant  t-\frac{s}{n+1}
\quad \Rightarrow \quad
r\left( t-\frac{s}{n+1} \right)\rho(s) \leqslant r\left( t-\frac{s}{n} \right)\rho(s) .\]
Integrating with respect to $s$, we have $\tilde{r}_{n+1}(t) \leqslant \tilde{r}_{n}(t) $. So $(\tilde{r}_n)$ is a non increasing sequence. \\
Similarly for all $t\leqslant t'$ and $s \in [0,1]$,
\[ t-\frac{s}{n} \leqslant t'-\frac{s}{n}
\quad \Rightarrow  \quad 
r\left( t'-\frac{s}{n} \right)\rho(s) \leqslant r\left( t-\frac{s}{n} \right)\rho(s) . \]
So $\tilde{r}_{n}(t') \leqslant \tilde{r}_{n}(t)$ and then $\tilde{r}_n$ is a non increasing function. \\
Let us now define
\[ b_n = \inf\left( t \in \RR{} \ | \ \tilde{r}_n(t) = 0 \right) \]
and check that $b_n = b + \frac{1}{n}$.
Indeed, for all $t>b+1/n$ and $s \in [0,1]$ we have
\[ t-\frac{s}{n} > b \quad \Rightarrow \quad r\left(t-\frac{s}{n}\right)=0 \quad \Rightarrow \quad \tilde{r}_n(t) = 0 .\]
So $b_n \leqslant b + 1/n$. For all $t \in ]b,b+1/n[$, there exists a $s_0\in ]0,1[$ such that $t-s_0/n = b$. Hence, $r(t-s/n)=0$ on $[0,s_0[$ and $r(t-s/n)>0$ on $]s_0,1]$. Thus $\tilde{r}_n(t) > 0$. As $\tilde{r}_n$ is non increasing, we have
\[ \forall t < b +1/n, \quad \tilde{r}_n(t)>0 . \]
So $b_n \geqslant b +1/n$. \\
Moreover, for all $t>0$, $\tilde{r}_n(t) < R$. Indeed for $t \in ]0,1/n[$ , there exists a $s_0\in ]0,1[$ such that $t-s_0/n = 0$. Thus, $r(t-s/n)\leqslant r(0)<R$ on $[0,s_0[$ and $r(t-s/n)=R$ on $]s_0,1]$. Hence $\tilde{r}_n(t) <R$. As $\tilde{r}_n$ is non increasing, the inequality remains valid for larger $t$.

\textbf{Step 2: } now we modify this sequence in order to have decreasing functions, to be able to add the $t^q$ tails after $t=b_n$ and at $t=0$ and then to have a $C^2$ function $v_n$. Let $p^* = |d-1-p|$  if $d-1-p<0$ and $p^*=1$ else and take the following auxiliary function (see Figure \ref{regul 1}) defined by
\[ \forall t \in [0,b_n], \qquad h_n(t) =
\left( 1 - \frac{1}{n^{1/2p^*}}\right)\frac{t}{b_n} . \]
Then define
\[ \forall t \in [0,b_n], \qquad r_n(t) = R - h_n(t)(R- \tilde{r}_n(t)) . \]

\begin{figure}[!ht]

\begin{center}
\begin{tikzpicture}[scale=0.9]
\draw (-4,0) node[below left] {$0$} ;
\draw [->] (-4.5,0) -- (8.2,0) node[below right] {$t$} ;
\draw [->] (-4,-1) -- (-4,4) ;
\draw (-4.5,3) -- (7,3) ; \draw (7.4,3) node {$r(0)$} ;
\draw (-4.5,3.42) -- (7,3.42) ; \draw (7.2,3.42) node {$R$} ;
\draw
plot [domain=-3:1,samples=100] (\x,{2.99+(2/pi)*(rad(atan(-1*\x))-rad(atan(3)))})
plot [domain=1:3/2] (\x,{sqrt(19/10-\x)*sqrt(10)/2}) -- (2,1)
plot [domain=2:5,samples=100] (\x,{(5-\x)^2/9}) node {$|$} ;
\draw [dotted] (1,1.7) -- (1,1.5) ;
\draw [thick] plot [domain=-4:6,samples=100] (\x,{1.665-3*1.3/pi*rad(atan(\x-1.8))})
node {$|$} ;
\draw [thick] plot [domain=-4:6,samples=100]
(\x,{3.4-(\x+4)*0.075*(4-1.665+3*1.3/pi*rad(atan(\x-1.8)))}) ;
\draw [dotted] (5.85,-0.2) rectangle (8,0.7) ;
\draw [dotted] (-4.15,3.8) rectangle (-2,3.1) ;
\draw (7.2,0.3) node {$t^q$ tail} ;
\draw (-2.7,3.6) node {$t^q$ tail} ;
\draw [dashed] (0,0.40) -- (6.5,0.40) ;
\draw [<->] (5,-0.75) -- (6,-0.75) ;
\draw [<->] (4.6,1.05) -- (4.6,3.395) ;
\draw (4.2,0.98) -- (4.8,0.98) ;
\draw [<->] (4.5,3.395) -- (4.5,0.25) ;
\draw [<->] (-0.1,0.05) -- (-0.1,0.40) ;
\draw (5.5,-1) node {$1/n$} (6.1,2)
node {$h_n(t)(R-\tilde{r}_n(t))$} (-0.9,0.2) node {$R/n^{1/2p^*}$};
\draw (5,-1/2) node {$b$} (6,-1/2) node {$b_n$} ;
\draw (0.75,2) node {$r$} (2.2,1.5) node {$\tilde{r_n}$} (2.85,2) node {$r_n$} ;
\end{tikzpicture}
\end{center}

\caption{Regularization of $r$ on $[0,b_n]$.}
\label{regul 1}

\end{figure}
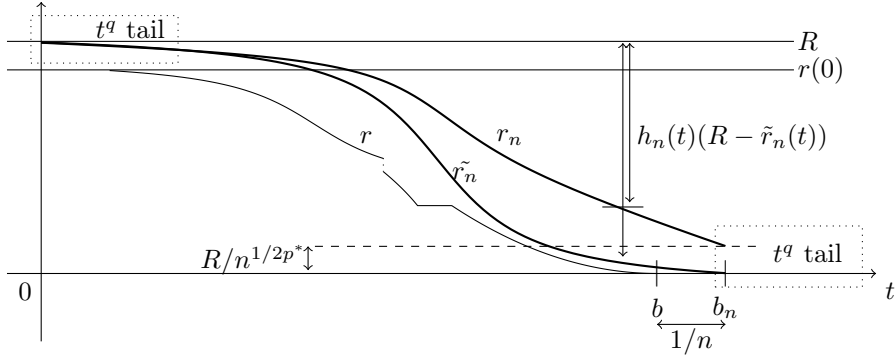

\noindent This sequence remains non increasing. Indeed, picking $t$ in $[0,b_n]$, $(b_n)$ is non increasing and non negative so $\left(\frac{t}{b_n}\right)$ is also non increasing and non positive. $\left(1-\frac{1}{n^{1/2p^*}}\right)$ is non increasing and non negative so $(h_n(t))$ is non increasing and non negative. $(\tilde{r}_n(t))$ is non increasing and bounded by $R$, hence $((R-\tilde{r}_n(t))h_n(t))$ is non decreasing and $(r_n(t))$ is non increasing. Furthermore $0\leqslant h_n(t) \leqslant 1-1/n^{1/2p^*}$, therefore the sequence is bounded by $R/n^{1/2p^*}$ and $R$. \\
Moreover, we easily find that $r_n$ is decreasing thanks to the calculation
\[ \forall t \in ]0,b_n], \quad r'_n(t) = \underbrace{-\left( 1-\frac{1}{n^{1/2p^*}}\right)}_{<0}
\left( \underbrace{\frac{1}{b_n}}_{>0}
\underbrace{(R-\tilde{r}_n(t))}_{>0}
+ \underbrace{\frac{t}{b_n}}_{>0}
\underbrace{(-\tilde{r}'_n(t))}_{\geqslant 0}
\right) < 0 . \]

\textbf{Step 3: } now we build the $t^q$ tail beyond $t=b_n$. With $0<q<\min(1,1/2p^*)$, consider
\[ \forall t \in [c_n,b_n+1/n], \quad \eta(t) = \left( b+\frac{2}{n}-t\right) ^q . \]
We took
\[ c_n = b+\frac{2}{n}-\left( \frac{R}{2} \right)^{1/q} \frac{1}{n^{1/2pq}} , \]
with $c_n$ chosen to satisfy $\eta(c_n)=\frac{R}{2n^{1/2p^*}}$, and for $n>(R/2)^\frac{1/q}{1/2p^*-1}$, $b_n<c_n<b_n+1/n$. Also
\[ r_n(b_n)=\frac{R}{n^{1/2p^*}} \quad > \quad \frac{R}{2n^{1/2p^*}} = \eta(c_n), \]
\[ r'_n(b_n)=-R\left( 1- \frac{1}{n^{1/2p^*}} \right) < 0 \quad \text{and} \quad
\eta'(c_n)=-q\left( \left( \frac{R}{2} \right)^{1/q} \frac{1}{n^{1/2p^*}} \right)^{q-1} < 0. \]
Thanks to Lemma \ref{recollement}, we can reattach $\eta$ and $r_n$ between $[b_n,c_n]$ in a $C^2$ and decreasing way (see Figure \ref{tail 1}).

\begin{figure}[!ht]

\begin{center}
\begin{tikzpicture}
\draw [->] (-1,0) -- (6,0) node[above] {$t$} ;
\draw [dashed] (0,0) -- (0,3) ;
\draw [dashed] (3,0) -- (3,3) ;
\draw [dashed] (-0.2,1.25) -- (6,1.25) ;
\draw [dashed] (-1,2.5) -- (6,2.5) ;
\draw [thick] plot [domain=-1:0] (\x,{2.5 +0.9/sqrt(2)*(sqrt(1-\x)-1)}) ;
\draw [thick] plot [domain=3:5,samples=50] (\x,{1.25/sqrt(2)*sqrt(5-\x)}) ;
\draw [<->] (-0.3,0.05) -- (-0.3,2.45) ;
\draw [<->] (0.2,0.05) -- (0.2,1.20) ;
\draw (1.5,1.9) node {$C^2$ junction} ;
\draw (0,-0.5) node {$b+1/n$} (3,-0.5) node {$c_n$} (5.2,-0.5) node {$b+2/n$} ;
\draw (-0.9,1.25) node {$\displaystyle \frac{R}{n^{1/2p^*}}$} (1.1,0.6) node {$R/2n^{1/2p^*}$} ;
\draw (4.7,0.8) node {$\eta$} (-0.3,2.8) node {$r_n$} ;
\end{tikzpicture}
\end{center}

\caption{Regularization of $r$ near $b$.}
\label{tail 1}

\end{figure}
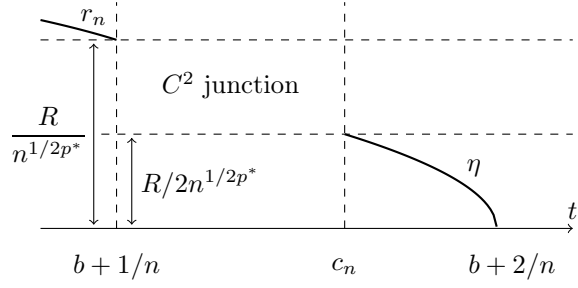

\noindent Now we extend $r_n$ by $0$ beyond $b_n+1/n$, which allows it to be in the regular case.
\medskip

\textbf{Step 4: } once more we add the $t^q$ tail near $t=0$ considering
\[ \forall t \in [0,1/n], \quad \theta(t) = R+\frac{1}{n}-t^q . \]
Let $a_n=(1/2n)^{1/q}$ be such that $\theta(a_n)=R+1/2n$. Note we have $0<a_n<1/n$ when $n>(1/2)^{\frac{1/q}{1/q-1}}$. Furthermore
\[ r_n(1/n) < R < \theta(a_n), \quad r_n'(1/n)<0 \quad \text{and} \quad
\theta'(a_n)=-q\left( \frac{1}{2n} \right)^\frac{q-1}{q} < 0. \]
Then, as before, we can do a $C^2$ reattachment using Lemma \ref{recollement} (see Figure \ref{tail 2}).

\begin{figure}[!ht]

\begin{center}
\begin{tikzpicture}
\draw [->] (0,0) -- (0,3.5) ;
\draw (-0.5,1.5) -- (6.5,1.5) ; \draw (6.7,1.4) node {$R$} ;
\draw [dashed] (-0.2,2.25) -- (6.5,2.25) ;
\draw [dashed] (-0.2,3) -- (6.5,3) ;
\draw [dashed] (1.5,3) -- (1.5,1.4) ;
\draw [dashed] (4,3) -- (4,0.7) ;
\draw (3.75,1) -- (4.25,1) ;
\draw [thick] plot [domain=0:6.5,samples=100] (\x,{1.48-0.03*(\x)^2}) ;
\draw [thick] plot [domain=0:1.5,samples=100] (\x,{3-0.6*sqrt(\x)}) ;
\draw [<->] (5.5,2.9) -- (5.5,1.55) ;
\draw [<->] (5.65,2.15) -- (5.65,1.55) ;
\draw (5.1,2.6) node {$1/n$} (6.2,1.8) node {$1/2n$} ;
\draw (1.5,1.2) node {$a_n$} (4,0.5) node {$1/n$} ;
\draw (0.8,2.75) node {$\theta$} (5.5,0.75) node {$r_n$} ;
\draw (2.75,1.75) node {$C^2$ junction} ;
\end{tikzpicture}
\end{center}

\caption{Regularization of $r$ near $a$.}
\label{tail 2}

\end{figure}
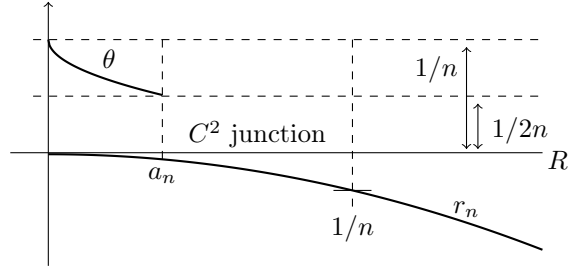

\noindent The sequence $(r_n)$, built with the two tails, satisfies the assumption of the regular case, therefore we have \[ W_p(v_n) = \int_0^{\sup v_n} W_p(B(0,r_n(t)) \ud t
= \omega_d \int_0^{b+2/n} r_n(t)^{d-1-p} \ud t . \]
We just have to check that $v_n$ converges to $v$ in $L^1$ and that the integral above converges to the corresponding integral with $r$.
\medskip

\textbf{Step 5: } thanks to the convolution, $\tilde{r}_n$ converges to $r$ (expanded by $R$ and $0$) in $L^1_{loc}$. Thus, in the compact $[0,b+2]$, up to a subsequence,
\[ \text{for almost every } t\in [0,b+2], \quad \tilde{r}_n(t) \to r(t) \qquad \text{when } n\to +\infty.\]
Moreover, for all $t\in ]0,b]$, there exists a $n_0 \in \NN$ such that, for all $n>n_0$, $1/n<t$ and then, as $t$ is far away from the tail in $0$, 
\[ |r_n(t)-\tilde{r}_n(t)|
\leqslant |R-\tilde{r}_n(t)|\left| \frac{t}{b+1/n} \right| \frac{1}{n^{1/2p}}
\leqslant \frac{R}{n^{1/2p}} \stackrel{n\to +\infty}{\longrightarrow} 0 . \]
For all $t>b$, there exists a rank $n$ such that $b+2/n<t$ and so
\[ r_n(t) = \tilde{r}_n(t) = r(t) = 0. \]
Thus, $r_n$ converges to $r$ almost everywhere on $[0,b+2]$.
Take $v_n(x)=r^{-1}_n(|x|)$ and using lemma \ref{ev-ga}
\[ ||v-v_n||_{L^1} = \int_0^{b+2} \alpha_d|r_n(t)^d - r(t)^d| \ud t  \]
with $\alpha_d$ the volume of the unit ball of $\RR{d}$. \\
As $|r_n(t)^d - r(t)^d|$ converges to $0$ almost everywhere and $|r_n(t)^d - r(t)^d| \leqslant 2R^d$ then, by the Lebesgue dominated convergence theorem,
\[ v_n \stackrel{n\to +\infty}{\longrightarrow} v \qquad \text{in } L^1(\RR{d}) . \]

\textbf{Step 6: } if $d-1-p\geqslant 0$ then, as $|r_n|\leqslant R$, by the Lebesgue dominated convergence theorem, we have
\[ W_p(v_n) = \omega_d \int_0^{b+2/n} r_n(t)^{d-1-p} \ud t
\stackrel{n\to +\infty}{\longrightarrow}
\omega_d \int_0^{b} r(t)^{d-1-p} \ud = \int_0^{+\infty} W_p(\{ v\geqslant t\}) \ud t .\]
If $d-1-p<0$ then we split the integral on several intervals
\[ \frac{W_p(v_n)}{\omega_d} = \int_0^{b+2/n} \frac{1}{r_n(t)^{p^*}} \ud t = 
\underbrace{\int_0^{1/n} \frac{1}{r_n(t)^{p^*}} \ud t}_{I_1} +
\underbrace{\int_{1/n}^b \frac{1}{r_n(t)^{p^*}} \ud t}_{I_2} +
\underbrace{\int_b^{c_n} \frac{1}{r_n(t)^{p^*}} \ud t}_{I_3} +
\underbrace{\int_{c_n}^{b+2/n} \frac{1}{r_n(t)^{p^*}} \ud t}_{I_4}. \]
Note that after both $C^2$ reattachments the sequence $(r_n)$ may no longer be globally decreasing. However we only need this monotonicity to be true on $[1/n,b]$ which is the case. \\
For $I_1$, for all
$t\in [0,1/n]$: $r_n(t)\geqslant r_n(1/n) \geqslant r_n(b/2) \geqslant r(b/2)>0$ because $r_n$ is a non-increasing function and $(r_n)$ a non-increasing sequence bounded by below by $r$. So
\[ \forall t \in [0,1/n], \ \frac{1}{r_n(t)^{p^*}} \leqslant \frac{1}{r(b/2)^{p^*}}. \]
Then $I_1 \to 0$ when $n\to +\infty$. \\
For $I_2$, $\displaystyle \left( \ind{t>1/n}\frac{1}{r_n(t)^{p^*}}  \right) $ is a non-decreasing sequence of functions converging almost everywhere to $\frac{1}{r(t)^{p^*}}$ on $]0,b[$. By the monotone convergence theorem,
\[ I_2 \stackrel{n\to +\infty}{\longrightarrow} \int_0^{b} \frac{1}{r(t)^{p^*}} \ud t .\]
For $I_3$, on $[b,c_n]$: $r_n(t) \geqslant R/n^{1/2p^*}$, so
\[ I_3 \leqslant \frac{\sqrt{n}}{R^{p^*}}(c_n-b) \leqslant \frac{2}{R^{p^*}}\frac{1}{\sqrt{n}} .\]
Then $I_3 \to 0$ when $n\to +\infty$. \\
For $I_4$, on $[c_n,b+2/n]$:
\[ r_n(t) = \left( b+\frac{2}{n}-t \right)^q .\]
Using the Cauchy-Schwarz inequality (and writing
$A=\left( \frac{R}{2} \right)^{1/q}\frac{1}{n^{1/2qp^*}}$) we have
\[ I_4 = \int_0^A \frac{\ud s}{s^{qp^*}} \leqslant
\sqrt{\int_0^1 \ind{s<A}^2 \ud s}\sqrt{\int_0^1 \frac{\ud s}{s^{2qp^*}} } . \]
The first square root is equal to $\displaystyle \left( \frac{R}{2} \right)^{1/2q}\frac{1}{n^{1/4qp^*}} \to 0$ when $n\to +\infty$. The second one is finite because $2qp^*<1$. Then $I_4 \to 0$ when $n\to +\infty$. \\
Finally we have, in every case,
\[ W_p(v_n) = \omega_d\int_0^{b+2/n} r_n(t)^{d-1-p} \ud t 
\stackrel{n\to +\infty}{\longrightarrow}
\omega_d\int_0^b r(t)^{d-1-p} \ud t = \int_0^{+\infty} \Wp(\{v\geqslant t\},\OO) \ud t .\]
As $v_n\to v$ in $L^1$ and by the definition of $\Wp$ we have
\[ \Wp(v,\OO) \leqslant \int_0^{+\infty} \Wp(\{v\geqslant t\},\OO) \ud t . \]
The other inequality is always true (lemma \ref{inegalite}), consequently
\[ \Wp(v,\OO) = \int_0^{+\infty} \Wp(\{v\geqslant t\},\OO) \ud t . \]
\end{proof}

\begin{rmq}
Actually we proved that for any $p>0$
\[ \Wp(v,\OO) \leqslant \int_0^{+\infty} W_p(\{ v \geqslant t \}) \ud t . \]
The restriction $p>1$ is required only for Lemma \ref{regulier}, in order to identify $\Wp$ with $W_p$ for balls and to have the equality . We will apply Theorem \ref{coarea} for $p=d-1$, which does not satisfy $p>1$ for $d=2$. However, for the invariant case $p=d-1$, balls are absolute minimisers of $W_p$ among regular compact sets, and thus Lemma \ref{regulier} can be replaced by Lemma \ref{minwillmore} to prove Theorem \ref{coarea}.
\end{rmq}

\begin{rmq}
Theorem \ref{coarea} can easily be extended in any open bounded subset $\OO$ for a radial function $v$ centered at some $x$, such that supp$(v)=B(x,R) \subset \OO$.
\end{rmq}

\begin{rmq}
We can have a slightly better result by replacing $v$ bounded by
\[ \int_0^{+\infty} r(t)^{d-1-p} \ud t < +\infty \]
and $v \in L^1$. Indeed, take the sequence $v_n$ by cutting $v$ where its values are over $n$. The corresponding radius sequence are $t \mapsto r(t)\ind{t<n}$. Apply Theorem \ref{coarea} to $v_n$, the Willmore energies converge to the right limit thanks to the new hypothesis.
\end{rmq}

\section{Minimizing movement of radial functions.} \label{partie mouvement}

Let $\OO = B(0,R_0)$ and consider the functional space
\[ L^1_{c+} (\OO) = \left\{ u\in L^1(\RR{d}) \ \big| \ u\geqslant 0,
\text{ supp}(u) \subset \OO  \right\} . \]
and let $u_0\in L^1_{c+} (\OO)$ be a radially non-increasing initial datum. We describe the minimizing movement of $\WW$ on $L^1_C$ in two steps. First we study the minimization of
\[ \FF (u) = \WW (u,\OO) + \frac{1}{2\tau} ||u-u_0||_{L^1}^2  \]
in $L^1_{c+} (\OO)$ and we let $u_1$ be a minimizer. Then we iterate the scheme replacing $u_0$ by $u_1$ to construct a minimizing sequence $(u_n)$. \\
In a second step, we study the behaviour of the minimizing sequence when $\tau \to 0$ by writing $u_\tau (t) = u_{[t/\tau]}$ and looking for a limit function  $u : [0,+\infty[ \to L^1_{c+} (\OO)$ of $u_\tau$ when $\tau \to 0$, maybe up to a subsequence.

We will describe the minimizing movement throughout three results. The first one confirms the natural intuition that there exists a radially non-increasing minimizer of $\FF$, i.e. it has the same structure as the initial datum. Our second result states that this minimizer is a truncation of the initial datum (all values larger than some $\lambda$ are replaced by $\lambda$). Then we prove that the minimizing movement is an erosion of the initial datum whose speed is given by an ordinary differential equation.

We denote by $r_0$ the radius function of the initial datum $u_0$.

\subsection{Radially non-increasing minimizer.}

First we prove that among all functions of $L^1_{c+} (\OO)$, the radially non-increasing ones are the best for our minimization problem, see Theorem \ref{radialisation} below.

Consider any candidate $u\in L^1_{c+} (\OO)$. Replace every superlevel set by a ball with the same volume. Doing that, one constructs a radially non-increasing function $\tilde{u}$ with a lower Willmore energy (using the previous coarea formula \ref{coarea})and, as we will see, closer to the initial datum. That is why
\[ \FF(\tilde{u}) \leqslant \FF(u) . \]

Let us now write all the details. Let $u$ be a candidate function in $L^1_{c+} (\OO)$, define the radius function of $u$ by the relation
\[ |\{ u\geqslant t\}| = |B(0,r(t))| . \]
$r$ is a non-increasing function such that $\textup{supp }r = [0,\sup u]$ and $r(0) < +\infty \text{ because } |B(0,r(0))| = | \textup{supp }u |$.
Then we define
\[ \tilde{u}(x) = \sup \{ t\in \RR{} \ | \ r(t) \geqslant |x| \} \]
such that
\[ \{ \tilde{u} \geqslant t\} = \overline{B}(0,r(t)) . \]
$\tilde{u}$ is a radially non-increasing function with $r$ as radius function. \\
Use Lemma \ref{ev-ga} to see that
\[ ||\tilde{u} ||_{L^1} = \int_0^{+\infty} |\{ \tilde{u} \geqslant t\}| \ud t
= \int_0^{+\infty} |B(0,r(t))| \ud t
= \int_0^{+\infty} |\{ u \geqslant t\}| \ud t = ||u||_{L^1} . \]
Now we use the following lemmas (see the appendix for proofs)

\begin{lem} \label{inclusion}
Let $\lambda,\mu>0$. The function $(A,B) \mapsto |A\Delta B|$, defined on the collection of pairs $(A,B)$ such that $|A|=\lambda$ and $|B|=\mu$, reaches its minimum whenever $A\subset B$ or $B\subset A$ (up to a negligible set) and its minimum is $|A\Delta B| = |\lambda-\mu|$.
\end{lem}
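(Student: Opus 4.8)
The plan is to establish a lower bound $|A \Delta B| \geqslant |\lambda - \mu|$ that holds for all admissible pairs, and then exhibit the inclusion case as the one attaining it. First I would recall the elementary set-measure identities
\[
|A \Delta B| = |A| + |B| - 2|A \cap B| = |A \cup B| - |A \cap B|,
\]
valid for any measurable sets of finite measure. Using $|A| = \lambda$ and $|B| = \mu$, the first identity gives $|A \Delta B| = \lambda + \mu - 2|A \cap B|$, so minimizing $|A \Delta B|$ is equivalent to \emph{maximizing} the overlap $|A \cap B|$. The constraint on the overlap is simply $|A \cap B| \leqslant \min(\lambda, \mu)$, since the intersection cannot exceed either set. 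Substituting this bound yields
\[
|A \Delta B| \;\geqslant\; \lambda + \mu - 2\min(\lambda,\mu) \;=\; |\lambda - \mu|,
\]
which is the claimed lower bound, using the identity $a + b - 2\min(a,b) = |a-b|$.

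It then remains to check that the bound is sharp and is realized precisely in the inclusion case. Suppose, without loss of generality, $\lambda \leqslant \mu$. If $A \subset B$ up to a negligible set, then $|A \cap B| = |A| = \lambda = \min(\lambda,\mu)$, so the overlap is maximal and the inequality above becomes an equality: $|A \Delta B| = \mu - \lambda = |\lambda - \mu|$. Conversely, equality in the chain forces $|A \cap B| = \min(\lambda,\mu) = \lambda = |A|$, which means $A \subset B$ up to a negligible set; the symmetric argument handles the case $\mu \leqslant \lambda$ and gives $B \subset A$. This shows the minimum is attained exactly on the inclusion configurations and nowhere else (modulo negligible sets).

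I do not expect any serious obstacle here, as the statement is a routine measure-theoretic fact; the only mild subtlety is the phrase ``up to a negligible set,'' which must be threaded carefully through the equality analysis so that the characterization of minimizers is stated correctly. Concretely, the identity $|A \cap B| = |A|$ is equivalent to $|A \setminus B| = 0$, i.e.\ $A \subset B$ modulo a Lebesgue-null set, and one should phrase the conclusion in those terms rather than asserting literal inclusion. Everything else is a direct consequence of the two standard identities for the symmetric difference and the trivial overlap bound $|A \cap B| \leqslant \min(|A|,|B|)$.
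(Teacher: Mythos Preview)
Your proposal is correct and follows essentially the same approach as the paper: both use the identity $|A\Delta B| = \lambda + \mu - 2|A\cap B|$ together with the bound $|A\cap B|\leqslant\min(\lambda,\mu)$, identifying equality with the inclusion configuration. Your write-up is slightly more detailed in handling the ``up to a negligible set'' converse, but the argument is the same.
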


\begin{lem} \label{minwillmore}
Let $E$ be a compact and non negligible subset of $\OO$. Then
\[ \WW(E,\OO) \geqslant \omega_d. \]
\end{lem}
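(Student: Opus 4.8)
The plan is to reduce the claim to a pointwise geometric inequality for smooth closed hypersurfaces and then pass to the relaxation. Since $E$ is only assumed compact and non-negligible, I cannot evaluate $W(E)$ directly, so I work from the definition of $\WW(E,\OO)$ as an infimum over sequences $E_n\subset\OO$ with $\od E_n$ in $C^2$ and $E_n\to E$ in $L^1$. Because $|E|>0$ and $|E_n\Delta E|\to 0$ force $|E_n|\to|E|>0$, every such $E_n$ is non-negligible (hence an admissible regular set) for $n$ large; therefore it suffices to prove $W(E')\geqslant\omega_d$ for every regular compact set $E'\subset\OO$, and then take the $\liminf$ over the sequence and the infimum over all sequences. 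If no admissible sequence exists, the infimum is $+\infty$ and the bound holds trivially.

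For the geometric inequality I write $M=\od E'$, a closed $C^2$ hypersurface bounding $E'$, with outward Gauss map $\nu:M\to S^{d-1}$ whose differential is the shape operator with eigenvalues $\kappa_1,\dots,\kappa_{d-1}$ and Jacobian the Gauss–Kronecker curvature $K=\prod_i\kappa_i$. The key point is a farthest-point argument: for each $\xi\in S^{d-1}$, a point $p_\xi\in M$ maximizing $p\mapsto\langle p,\xi\rangle$ has outward normal $\xi$, and since $E'$ lies in the supporting half-space at $p_\xi$ all principal curvatures are nonnegative there. Thus the convex part $M^+=\{\,p\in M:\kappa_1(p),\dots,\kappa_{d-1}(p)\geqslant 0\,\}$, on which $K\geqslant 0$ and $H\geqslant 0$, is mapped onto $S^{d-1}$ by $\nu$. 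The area formula applied to $\nu|_{M^+}$ then gives $\int_{M^+}K\,\ud\HH{d-1}=\int_{S^{d-1}}\#\{p\in M^+:\nu(p)=\xi\}\,\ud\HH{d-1}(\xi)\geqslant\HH{d-1}(S^{d-1})=\omega_d$.

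To finish, I combine this covering estimate with the arithmetic–geometric mean inequality, where the normalization $H=\frac{1}{d-1}\sum_i\kappa_i$ is exactly what is needed: on $M^+$, where every $\kappa_i\geqslant 0$, AM–GM yields $H\geqslant\bigl(\prod_i\kappa_i\bigr)^{1/(d-1)}=K^{1/(d-1)}$, hence $H^{d-1}\geqslant K$. Therefore $W(E')=\int_M|H|^{d-1}\,\ud\HH{d-1}\geqslant\int_{M^+}H^{d-1}\,\ud\HH{d-1}\geqslant\int_{M^+}K\,\ud\HH{d-1}\geqslant\omega_d$, which is the desired bound (attained by spheres, consistent with scale invariance since $W(\od B(0,r))=r^{-(d-1)}\,\omega_d r^{d-1}=\omega_d$).

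The main obstacle is the geometric step: making the farthest-point/surjectivity argument and the area formula for the Gauss map fully rigorous, in particular justifying that every direction $\xi$ is realized on the convex part $M^+$ and handling a possibly disconnected $M$ or several supporting points in the same direction. Once the covering inequality $\int_{M^+}K\,\ud\HH{d-1}\geqslant\omega_d$ is established, the AM–GM comparison and the passage through the relaxation are routine.
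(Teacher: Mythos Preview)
Your proof is correct and follows the paper's strategy exactly: reduce via the definition of $\WW(E,\OO)$ to the bound $W(E_n)\geqslant\omega_d$ for each smooth approximant $E_n$, then pass to the $\liminf$. The only difference is that the paper cites the underlying inequality $\int_M|H|^{d-1}\,\ud\HH{d-1}\geqslant\omega_d$ for closed $C^2$ hypersurfaces as a known result (referencing \cite{Jiazu2011}), whereas you supply its classical proof via Gauss-map surjectivity on the convex part $M^+$, the area formula, and AM--GM; so your argument is not a different route but a more self-contained version of the same one, and the concerns you flag (multiple supporting points, disconnected $M$, regularity of $M^+$) are harmless for the area-formula step.
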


On one hand $\{ u\geqslant t\}$ and $\{ \tilde{u}\geqslant t\}$ have the same volume. On the other hand $\{ \tilde{u}\geqslant t\}$ and $\{ u_0\geqslant t\}$ are concentric balls. Thus, using Lemma \ref{inclusion}
\[ |\{ u\geqslant t\} \Delta \{ u_0\geqslant t\}| \geqslant 
|\{ \tilde{u} \geqslant t\} \Delta \{ u_0\geqslant t\}| . \]
Integrating over $t$ on $[0,+\infty[$ and using Lemma \ref{ev-ga}, we have
\begin{equation} \label{min1} ||u-u_0||_{L^1} \geqslant ||\tilde{u}-u_0||_{L^1} . \end{equation}
Moreover, for all $t>0$, $\{ u\geqslant t\}$ is compact, so with the Lemma \ref{minwillmore} we have $|\{ u\geqslant t\}|=0$ or $\WW(\{ u\geqslant t\},\OO) \geqslant \omega_d$.
In the first case, that means $r(t)=0$ and then $\WW(B(0,r(t)),\OO) = 0$. In the second case, we observe that $\omega_d = \WW(B(0,R),\OO) = W(B(0,R))$ for all $R>0$. In any case we get
\[ \WW(\{ u\geqslant t\},\OO) \geqslant \WW(\{ \tilde{u}\geqslant t\},\OO) . \]
Integrating over $t$ on $[0,+\infty[$, using the coarea formula (Theorem \ref{coarea}) for $\tilde{u}$ and the coarea inequality (Lemma \ref{inegalite}) for $u$, we have
\begin{equation} \label{min2} \WW(u,\OO) \geqslant \WW(\tilde{u},\OO) . \end{equation}
By combining (\ref{min1}) and (\ref{min2}) we finally prove the following result.

\begin{thm} \label{radialisation}
For all $u\in L^1_{c+} (\OO)$, there exists $\tilde{u}\in L^1_{c+} (\OO)$ radially non-increasing such that \[ \FF(\tilde{u}) \leqslant \FF(u) ,\]
both values being possibly $+\infty$.
\end{thm}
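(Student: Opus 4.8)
The plan is to prove the statement by a volume-preserving radial rearrangement (a Schwarz-type symmetrization adapted to the superlevel-set structure), showing that passing from $u$ to its radial rearrangement $\tilde u$ decreases both terms of $\FF$ simultaneously. Given a candidate $u \in L^1_{c+}(\OO)$, I would first define its radius function $r$ through the volume identity $|\{u \geqslant t\}| = |B(0, r(t))|$ and set $\tilde u(x) = \sup\{t \mid r(t) \geqslant |x|\}$, so that $\{\tilde u \geqslant t\} = \overline B(0, r(t))$ and $\tilde u$ shares every superlevel volume of $u$. Before comparing energies I would check that $\tilde u$ lands in $L^1_{c+}(\OO)$: nonnegativity comes from $u \geqslant 0$, and $r(0) \leqslant R_0$ (hence compact support inside $\OO$) follows from $|B(0,r(0))| = |\mathrm{supp}\, u| \leqslant |\OO|$.

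For the fidelity term, the idea is to reduce the two $L^1$ distances to their superlevel sets via Lemma \ref{ev-ga}, which rewrites $\|u - u_0\|_{L^1}$ and $\|\tilde u - u_0\|_{L^1}$ as the integrals over $t$ of $|\{u \geqslant t\} \Delta \{u_0 \geqslant t\}|$ and $|\{\tilde u \geqslant t\} \Delta \{u_0 \geqslant t\}|$ respectively. At each fixed level $t$, the set $\{\tilde u \geqslant t\}$ is a ball concentric with $\{u_0 \geqslant t\}$ and has the same volume as $\{u \geqslant t\}$; Lemma \ref{inclusion} then gives the pointwise inequality $|\{u \geqslant t\} \Delta \{u_0 \geqslant t\}| \geqslant |\{\tilde u \geqslant t\} \Delta \{u_0 \geqslant t\}|$, since nested sets minimize the symmetric difference among pairs of prescribed volumes. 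Integrating in $t$ yields $\|u - u_0\|_{L^1} \geqslant \|\tilde u - u_0\|_{L^1}$.

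For the Willmore term I would argue level by level as well. By Lemma \ref{minwillmore}, for every $t > 0$ the compact set $\{u \geqslant t\}$ is either negligible --- in which case $r(t) = 0$ and the rearranged ball carries zero energy --- or satisfies $\WW(\{u \geqslant t\}, \OO) \geqslant \omega_d$; since every ball realizes exactly $\WW(B(0,R), \OO) = \omega_d$ by scale invariance, in all cases $\WW(\{u \geqslant t\}, \OO) \geqslant \WW(\{\tilde u \geqslant t\}, \OO)$. The delicate point, and the one I would handle most carefully, is the direction of the two coarea relations when integrating this inequality: I only have the coarea \emph{inequality} (Lemma \ref{inegalite}) for the possibly non-radial $u$, but the full coarea \emph{equality} (Theorem \ref{coarea}) for the radial $\tilde u$. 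Crucially both point the right way, giving
\[ \WW(u, \OO) \; \geqslant \; \int_0^{+\infty} \WW(\{u \geqslant t\}, \OO)\, \ud t \; \geqslant \; \int_0^{+\infty} \WW(\{\tilde u \geqslant t\}, \OO)\, \ud t \; = \; \WW(\tilde u, \OO). \]
Adding the two term-wise inequalities produces $\FF(\tilde u) \leqslant \FF(u)$, all manipulations remaining valid in the extended reals so that the conclusion survives when either side equals $+\infty$. The genuine obstacle is therefore not any single computation but this asymmetry: attempting to use equality for both functions would fail, because the general coarea formula does not hold for arbitrary $u$, whereas the inequality available for $u$ happens to have exactly the sign needed to close the argument.
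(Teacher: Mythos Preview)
Your proposal is correct and follows essentially the same argument as the paper: the same Schwarz-type rearrangement of superlevel sets, the same use of Lemma~\ref{inclusion} together with Lemma~\ref{ev-ga} for the fidelity term, and the same combination of Lemma~\ref{minwillmore}, the coarea inequality (Lemma~\ref{inegalite}) for $u$, and the coarea equality (Theorem~\ref{coarea}) for $\tilde u$ to handle the Willmore term. You even identify explicitly the asymmetric role of the two coarea relations, which is exactly the point the paper exploits; the only cosmetic difference is that you should note $r(0)<R_0$ strictly (since $\mathrm{supp}\,u$ is compact in the open ball $\OO$), so that $\tilde u$ is genuinely compactly supported in $\OO$ and Theorem~\ref{coarea} applies.
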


\begin{rmq}
With the idea that the ball minimizes the perimeter with prescribed volume, we can prove with the coarea formula for functions with bounded variation that this construction also decreases the total variation. Indeed we have
\[ TV(\tilde{u},\OO) \leqslant TV(u,\OO) .\]
\end{rmq}

\subsection{Shape of a minimizer.} \label{forme min}

We now know that we can look for the minimum of $\FF$ among radially non-increasing functions of $L^1_{c+} (\OO)$. We transform the minimization problem on $\FF$ onto a minimization problem on the radius. We introduce
\[ \mathcal{A} = \{ r:[0,+\infty[ \to \RR{+} \ | \ r \text{ is non-increasing, }
r(0)< R_0 \text{ and } r\in L^d([0,+\infty[) \} . \]
Any radially non-increasing function $u:\RR{d} \to \RR{+}$ can be associated to a radius function $r:[0,+\infty[ \to \RR{+}$ by the relation
\begin{equation} \label{link} \{ u\geqslant t \} = B(0,r(t)). \end{equation}
As a consequence, for all $u$, $r$ is non-increasing and conversely any non-increasing function $r$ can be canonically associated to a radially non-increasing function $u$ defined by $u(x)=sup{t>0 \ | \ r(t)\geqslant  |x|}$. Moreover $u\in L^1 \Leftrightarrow r\in L^d$ and supp $u \subset \OO$ $\Leftrightarrow r(0)<R_0 $.

\begin{prop}
For any pair of functions $(u,r)$ satisfying relation (\ref{link}) for almost every $t>0$:
\[ u\in L^1_{c+} (\OO) \Leftrightarrow r\in \mathcal{A} . \] 
\end{prop}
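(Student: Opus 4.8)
The plan is to prove the two implications simultaneously by translating each of the three defining conditions of $L^1_{c+}(\OO)$ — nonnegativity, $L^1$-integrability, and support inclusion — into the corresponding condition defining $\mathcal{A}$, using relation (\ref{link}) together with Cavalieri's formula (Lemma \ref{ev-ga}) for the integrability part and a direct geometric argument for the support part.

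First I would dispose of nonnegativity and monotonicity. Under relation (\ref{link}), $u\geqslant 0$ is built into the canonical construction of $u$ from $r$ (the defining supremum is taken over $t>0$, with value $0$ when the set is empty). Monotonicity is automatic in both directions: for $t<t'$ the nesting $\{u\geqslant t'\}\subset\{u\geqslant t\}$ forces $B(0,r(t'))\subset B(0,r(t))$, hence $r(t')\leqslant r(t)$; conversely a non-increasing $r$ yields nested superlevel balls. So $r$ non-increasing requires no separate argument.

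Next, for the equivalence $u\in L^1\Leftrightarrow r\in L^d$, I would apply Lemma \ref{ev-ga} with $v\equiv 0$. Since $u\geqslant 0$, for $t>0$ we have $\{u\geqslant t\}\Delta\{0\geqslant t\}=\{u\geqslant t\}=B(0,r(t))$, so that
\[ ||u||_{L^1}=\int_0^{+\infty}|B(0,r(t))|\ud t=\alpha_d\int_0^{+\infty}r(t)^d\ud t=\alpha_d\,||r||_{L^d([0,+\infty[)}^d. \]
This single identity gives $u\in L^1\Leftrightarrow r\in L^d$ at once, in both directions.

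Finally, for the support condition I would use $\{u>0\}=\bigcup_{t>0}\{u\geqslant t\}=\bigcup_{t>0}B(0,r(t))$, which, since $r$ is non-increasing, equals up to a negligible set the ball of radius $\rho:=\sup_{t>0}r(t)=\lim_{t\to 0^+}r(t)$; thus $\textup{supp}(u)=\overline{B}(0,\rho)$. As $\OO=B(0,R_0)$ is open, the inclusion $\textup{supp}(u)\subset\OO$ is equivalent to the strict inequality $\rho<R_0$. The main subtlety, and the step I expect to require the most care, lies exactly here: relation (\ref{link}) only holds for a.e. $t>0$ and so does not by itself pin down $r(0)$, so one must fix the convention $r(0)=r(0^+)=\rho$ (the natural value making $r$ non-increasing and recording the support radius), and one must keep track of the open/closed distinction between $\OO$ and $\overline{B}(0,\rho)$ to land on the strict inequality $r(0)<R_0$ rather than $\leqslant$. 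With this convention, the three equivalences assemble into the stated claim.
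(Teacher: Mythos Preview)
Your proof is correct and follows the same line as the paper's own (informal) justification: the paper does not give a separate proof environment for this Property but records exactly the three ingredients you use --- monotonicity of $r$ from the nesting of superlevel sets, the equivalence $u\in L^1 \Leftrightarrow r\in L^d$ via the layer-cake identity, and $\textup{supp}(u)\subset\OO \Leftrightarrow r(0)<R_0$. Your version is slightly more careful in flagging the convention $r(0)=r(0^+)$ and the open/closed ball distinction, but the approach is identical.
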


We then write the expression of $\FF$ using the radius functions of $u$ and $u_0$:
\[ \WW(u,\OO) = \omega_d \int_0^{\sup u} r(t)^{d-1-(d-1)} \ud t = \omega_d \sup u = \omega_d |\textup{supp } r | \]
and
\[ ||u-u_0||_{L^1} = \alpha_d \int_0^{+\infty} |r(t)^d-r_0(t)^d| \ud t . \]
So
\[ \FF(u) = F(r) = \omega_d |\text{supp } r | + \frac{\alpha_d^2}{2\tau}
\left( \int_0^{+\infty} |r(t)^d-r_0(t)^d| \ud t  \right)^2 . \]
We deduce the following proposition

\begin{propo}
For any pair of functions $(u,r)$ satisfying relation (\ref{link}), the following properties are equivalent:
\begin{itemize}
\item $u$ is a minimizer of $\FF$ among radially non-increasing functions of $L^1_{c+} (\OO)$ 
\item $r$ is a minimizer of $F$ in $\mathcal{A}$.
\end{itemize}
\end{propo}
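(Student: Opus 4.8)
The plan is to reduce the statement to the two ingredients already assembled: the dictionary between radially non-increasing functions and their radius functions, and the identity $\FF(u) = F(r)$ computed just above the statement. First I would record that relation (\ref{link}) sets up a correspondence $u \mapsto r$ which, by the preceding property, restricts to a bijection between the radially non-increasing functions of $L^1_{c+}(\OO)$ and the admissible class $\mathcal{A}$: given such a $u$ its radius function lies in $\mathcal{A}$, and conversely any $r \in \mathcal{A}$ is the radius function of the canonically associated $u(x) = \sup\{t>0 \mid r(t) \geqslant |x|\} \in L^1_{c+}(\OO)$. Thus the correspondence is onto $\mathcal{A}$ on one side and onto the radially non-increasing functions of $L^1_{c+}(\OO)$ on the other, up to the usual negligible-set identifications.

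Next I would invoke the computation preceding the statement, namely that for any pair $(u,r)$ satisfying (\ref{link}) one has $\WW(u,\OO) = \omega_d\,|\textup{supp } r|$ (via the coarea formula of Theorem \ref{coarea} with $p=d-1$) and $||u-u_0||_{L^1} = \alpha_d \int_0^{+\infty} |r(t)^d - r_0(t)^d| \, \ud t$ (via Lemma \ref{ev-ga}), so that $\FF(u) = F(r)$, both sides being possibly infinite.

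With these two facts in hand the equivalence is purely formal. Suppose $u$ minimizes $\FF$ among radially non-increasing functions of $L^1_{c+}(\OO)$. For any competitor $r' \in \mathcal{A}$, let $u'$ be its associated radially non-increasing function; then $u' \in L^1_{c+}(\OO)$ and $F(r') = \FF(u') \geqslant \FF(u) = F(r)$, so $r$ minimizes $F$ on $\mathcal{A}$. The reverse implication is symmetric: given any radially non-increasing competitor $u' \in L^1_{c+}(\OO)$ with radius function $r' \in \mathcal{A}$, minimality of $r$ gives $\FF(u') = F(r') \geqslant F(r) = \FF(u)$, whence $u$ minimizes $\FF$. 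Surjectivity of the correspondence on both sides is what makes each family of competitors match up exactly.

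The only point requiring genuine care — and the only plausible obstacle — is the well-definedness of this bijection at the level of equivalence classes: functions are identified up to Lebesgue-negligible sets and (\ref{link}) is only required to hold for almost every $t$, so one must check that $F(r)$ is unaffected by modifying $r$ on a negligible set of heights and that every admissible $r$ really yields an element of $L^1_{c+}(\OO)$. Both follow from the preceding property and from the observation that $F$ depends on $r$ only through the quantities $|\textup{supp } r|$ and $\int_0^{+\infty} |r^d - r_0^d|\,\ud t$, which are insensitive to such modifications. Everything else is a transcription of Theorem \ref{radialisation} and the identity $\FF = F$ into the language of radius functions.
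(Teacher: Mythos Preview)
Your proposal is correct and matches the paper's approach: the paper states this proposition immediately after computing $\FF(u)=F(r)$ and the preceding property establishing the bijection between radially non-increasing functions of $L^1_{c+}(\OO)$ and $\mathcal{A}$, treating it as a direct consequence with no further argument. Your write-up makes explicit precisely the formal reasoning the paper leaves to the reader; the only superfluous remark is the closing reference to Theorem~\ref{radialisation}, which concerns reducing arbitrary competitors to radial ones and plays no role once the competition is already restricted to radially non-increasing functions.
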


Now we will see that a minimizer is in fact a truncation of the initial datum $r_0$. Let $r$ be any candidate, consider $\tilde{r}(t) = r_0(t)\ind{t<\text{supp }r}$ and compute $F(\tilde{r})$. As supp $\tilde{r} \subset$ supp $r$ and $\tilde{r}=r_0$ on supp $r$, we have
\[ F(\tilde{r}) \leqslant F(r) .\]
Thus, we look for a minimizer such that
\[ r(t)=r_0(t)\ind{t<\lambda} . \]
Our non-trivial minimization problem on $L^1_{c+} (\OO)$ is now reduced to a one parameter ($\lambda$) minimization problem. Note $a=|$ supp $r_0 |$ and observe that we can only consider $0\leqslant \lambda \leqslant a$. \\
We study the minimization of
\[ \begin{array}{rrccl}
f & : & [0,a] & \to & \RR{+} \\
  &   &  \lambda  & \mapsto & \displaystyle \omega_d\lambda+\frac{\alpha_d^2}{2\tau}
\left( \int_\lambda^a r_0(s)^d \ud s \right)^2 .
\end{array} \]
$r_0^d \in L^1$ so $f$ is continuous (then admits a minimum) and almost everywhere differentiable:
\[ f'(\lambda) = \omega_d-\frac{\alpha_d^2}{\tau}r_0(\lambda)^d\int_\lambda^a r_0(s)^d \ud s .\]
$r_0$ is non-increasing and non-negative and, if it is constant on an interval then the integral cannot be constant on the same interval. This is why $f'$ is (strictly) increasing. Note that $f'(a)=\omega_d>0$ and
\[ f'(0)=\omega_d-\frac{\alpha_d^2}{\tau}r_0(0)^d||r_0||_{L^d}<0 \]
for a small enough $\tau$. If $r_0$ is continuous then there exists a unique $\lambda$ such that $f'(\lambda)=0$ otherwise we can define \[ \lambda = \sup (s \ | \ f'(s)\leqslant 0 ) . \]
Thus, there exists a unique $\lambda$ minimizing $f$ and, if $r_0$ is continuous, it is defined by the relation
\begin{equation} \label{lambda}
r_0(\lambda)^d\int_\lambda^a r_0(s)^d \ud s = \frac{\omega_d}{\alpha_d^2}\tau .
\end{equation}
Note that if $\tau$ is not small enough, the minimum $\lambda$ is equal to zero. Moreover for all $z\in [0,a[$ \[r_0(z)^d\int_z^a r_0(s)^d \ud s > 0, \]
so there exists a $\tau>0$ small enough so that $f'(z)<0$ and then $\lambda>z$. Consequently
\[ \lambda \stackrel{\tau \to 0}{\longrightarrow} a . \]

\begin{rmq}
Note that $u_0$ and $r_0$ do not share the same discontinuity points. Indeed, in the example of figure \ref{initial}, the point $B$ is a discontinuity point for $u_0$ (a discontinuity sphere actually) but it is a point where $r_0$ is locally constant. Conversely, $u_0$ is locally constant at point $A$ and it is a discontinuity point for $r_0$.
\end{rmq}

\subsection{Minimizing movement.}

According to the minimizing movement principle, we will construct a minimizing sequence. For each iteration, we can reduce to the minimization of $f$ by updating $a=\lambda_n$. We suppose now that the radius function $r_0$ is continuous except on $a$ (later we will need $r_0(a)>0$ which, by definition of $a$, induces a discontinuity)

Let $\lambda_0=a$, consider the functions
\[ f_n(\lambda) = \omega_d\lambda+\frac{\alpha_d^2}{2\tau}
\left( \int_\lambda^{\lambda_n} r_0(s)^d \ud s \right)^2 \]
and define $\lambda_{n+1}$ as the (unique) minimizer of $f_n$ on $[0,\lambda_n]$. The sequence $(\lambda_n)$ is non-increasing. The value of $\tau$ is fixed, as long as it is small enough, $\lambda_n$ is decreasing and satisfies (\ref{lambda}) i.e.
\[ r_0(\lambda_{n+1})^d\int_{\lambda_{n+1}}^{\lambda_n}
r_0(s)^d \ud s = \frac{\omega_d}{\alpha_d^2}\tau . \]
$(\lambda_n)$ is decreasing, and then, as we saw in Section \ref{forme min}, $\lambda_n$ may possibly be small enough with respect to $\tau$ to ensure $\lambda_{n+1}=0$. If that case occurs then for all $i>n$, $\lambda_i=0$ i.e. the minimizing sequence goes to zero in a finite number of iterations and then remains null.

As $r_0$ is non-increasing, we have
\[ r_0(\lambda_{n+1})^d r_0(\lambda_n)^d (\lambda_n-\lambda_{n+1})
\leqslant \frac{\omega_d\tau}{\alpha_d^2} \leqslant
r_0(\lambda_{n+1})^{2d}(\lambda_n-\lambda_{n+1}) . \]
In order to have a global estimation on $\lambda_n$, we need to suppose $r_0(a)>0$ which yields the aforementioned discontinuity. We obtain the double inequality
\begin{equation} \label{encadrement}
\frac{\omega_d\tau}{r_0(0)^{2d}\alpha_d^2}
\leqslant \lambda_n-\lambda_{n+1} \leqslant
\frac{\omega_d\tau}{r_0(a)^{2d}\alpha_d^2} .
\end{equation}

\begin{rmq}
The first inequality above shows that, beyond some threshold, we have $\lambda_n=0$. Indeed, by summing we deduce that
\[ \lambda_n \leqslant a- {\omega_d}{r_0(0)^{2d}\alpha_d^2} n\tau . \]
As we necessarily have $\lambda_n\geqslant 0$, that means
\[ n\tau \leqslant \frac{ar_0(0)^{2d}\alpha_d^2}{\omega_d} . \]
That gives us a bound, depending on $\tau$, of the threshold when $\lambda_n$ vanishes.
\end{rmq}

The second inequality allows us to prove the minimizing movement convergence. Define
\[ \begin{array}{rrccl}
\lambda_\tau & : & \RR{+} & \to & [0,a] \\
  &   &  t  & \mapsto & \displaystyle \lambda_n+\frac{t-n\tau}{\tau}(\lambda_{n+1}-\lambda_n)
\quad \text{for } t\in[n\tau,(n+1)\tau] .
\end{array} \]
$\lambda_\tau$ is a family of continuous and piecewise affine functions satisfying, using (\ref{encadrement}) 
\[ |\lambda_\tau'(t)| \leqslant \frac{\omega_d}{r_0(a)^{2d}\alpha_d^2} . \]
Using the previous remark, we know that $\lambda_\tau (t) = 0$ for $t\geqslant \frac{ar_0(0)^{2d}\alpha_d^2}{\omega_d}$ and then, using Ascoli theorem, we obtain the following result.

\begin{propo}
There exists a limit function $\lambda : \RR{+} \to [0,a]$ such that $\lambda_\tau \to \lambda$ uniformly when $\tau \to 0$, up to a subsequence. $\lambda$ is Lipschitz continuous and non-increasing.
\end{propo}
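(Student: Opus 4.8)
The plan is to apply the Arzelà-Ascoli theorem to the family $(\lambda_\tau)$. To do so I need two ingredients: a uniform equicontinuity bound and a uniform compact-support-type control (so that the limit stays in a fixed bounded domain and the functions are eventually zero). Both are provided by the work already done, so the proof is a matter of assembling them correctly.

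First I would record the Lipschitz bound. Since each $\lambda_\tau$ is piecewise affine, its derivative on $[n\tau,(n+1)\tau]$ is $(\lambda_{n+1}-\lambda_n)/\tau$, and the second inequality in (\ref{encadrement}) gives
\[ |\lambda_\tau'(t)| \leqslant \frac{\omega_d}{r_0(a)^{2d}\alpha_d^2} =: L \]
for every $\tau$ and every $t$. Because $L$ does not depend on $\tau$, the family $(\lambda_\tau)$ is uniformly Lipschitz, hence equicontinuous. Next I would invoke the preceding remark: from the first inequality in (\ref{encadrement}), summing shows $\lambda_\tau(t)=0$ once $t \geqslant T_0 := \frac{a\, r_0(0)^{2d}\alpha_d^2}{\omega_d}$, a threshold independent of $\tau$. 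Together with $0 \leqslant \lambda_\tau(t) \leqslant a$ for all $t$, this gives uniform boundedness on $\RR{+}$ and confinement of all the action to the fixed compact interval $[0,T_0]$.

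I would then apply Arzelà-Ascoli on $[0,T_0]$: the family $(\lambda_\tau)$ is uniformly bounded and equicontinuous there, so there is a subsequence $\tau_i \to 0$ and a continuous limit $\lambda$ with $\lambda_{\tau_i} \to \lambda$ uniformly on $[0,T_0]$. Extending $\lambda$ by $0$ on $[T_0,+\infty[$ yields uniform convergence on all of $\RR{+}$, since every $\lambda_{\tau_i}$ already vanishes past $T_0$. The limit $\lambda$ inherits the $L$-Lipschitz bound (a uniform limit of $L$-Lipschitz functions is $L$-Lipschitz) and is non-increasing (a uniform, hence pointwise, limit of non-increasing functions is non-increasing); moreover $0 \leqslant \lambda \leqslant a$ passes to the limit.

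The main obstacle, such as it is, is not analytic difficulty but bookkeeping: one must check that the two-sided control in (\ref{encadrement}) genuinely holds for the entire tail of the sequence, including the possibility that $\lambda_n$ hits $0$ in finite time. This is precisely why the hypothesis $r_0(a)>0$ was imposed — it keeps the Lipschitz constant $L$ finite and $\tau$-independent even as $\lambda_n \to 0$. I would make explicit that once the sequence reaches $0$ it stays there, so the affine interpolant is constant equal to $0$ beyond that index and the bounds are not violated; after that the application of Ascoli's theorem is routine.
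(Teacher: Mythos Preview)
Your proposal is correct and follows essentially the same route as the paper: a uniform Lipschitz bound from (\ref{encadrement}), the common vanishing time $T_0=\frac{a\,r_0(0)^{2d}\alpha_d^2}{\omega_d}$, and an application of the Arzel\`a--Ascoli theorem. The paper states these ingredients in the paragraph preceding the proposition and simply invokes Ascoli, so your write-up is just a slightly more detailed version of the same argument.
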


Thus, the minimizing movement exists, is continuous and consists of an erosion of the level of the initial datum.

\begin{rmq}
In standard minimizing movement theory the discrete flow is interpolated using $\lambda_\tau(t) = \lambda_{[t/\tau]}$ i.e a sequence of piecewise constant functions. For convenience, we rather used continuous piecewise affine functions. As
\[ |\lambda_{[t/\tau]}-\lambda_\tau(t)| \leqslant \frac{\omega_d}{r_0(a)^{2d}\alpha_d^2} \tau \]
both definitions of $\lambda_\tau$ have the same limit if at least one exists.
\end{rmq}

Furthermore, we can have more information on the limit function $\lambda$.
\begin{propo}
Suppose $r_0$ is continuous. At every point $t$ such that $\lambda(t) > 0$, $\lambda$ is differentiable and satisfies
\begin{equation} \label{loi}
\lambda'(t) = -\frac{\omega_d}{\alpha_d^2} \frac{1}{r_0(\lambda(t))^{2d}} . \end{equation}
\end{propo}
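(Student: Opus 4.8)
The plan is to turn the discrete recurrence defining $(\lambda_n)$ into an integral identity for the limit $\lambda$, and then to read off the ODE by differentiation. Recall that, for $\tau$ small, each $\lambda_{n+1}$ satisfies
\[ r_0(\lambda_{n+1})^d \int_{\lambda_{n+1}}^{\lambda_n} r_0(s)^d \ud s = \frac{\omega_d}{\alpha_d^2}\tau . \]
I would fix $t>0$ with $\lambda(t)>0$, put $N=[t/\tau]$, and sum this identity over $n=0,\dots,N-1$. Each summand equals $\frac{\omega_d}{\alpha_d^2}\tau$, so the total is $N\frac{\omega_d}{\alpha_d^2}\tau$, and along the subsequence $\tau\to 0$ producing $\lambda$ we have $N\tau\to t$.

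The core of the argument is to identify the limit of the summed left-hand side. Since $\lambda_0=a$, the points $a=\lambda_0>\lambda_1>\dots>\lambda_N$ partition $[\lambda_N,a]$ and
\[ \int_{\lambda_N}^a r_0(s)^{2d}\ud s = \sum_{n=0}^{N-1} \int_{\lambda_{n+1}}^{\lambda_n} r_0(s)^d\, r_0(s)^d \ud s . \]
I would compare this integral with the summed recurrence term by term. Writing $\sigma$ for the modulus of continuity of $s\mapsto r_0(s)^d$ on the compact $[0,a]$ and $\delta_\tau=\max_n(\lambda_n-\lambda_{n+1})$ for the mesh, the difference is bounded by
\[ \left| \sum_{n=0}^{N-1} r_0(\lambda_{n+1})^d \int_{\lambda_{n+1}}^{\lambda_n} r_0(s)^d \ud s - \int_{\lambda_N}^a r_0(s)^{2d}\ud s \right| \leqslant \sigma(\delta_\tau)\, r_0(0)^d\, (a-\lambda_N) . \]
By the upper bound in (\ref{encadrement}) the mesh $\delta_\tau$ tends to $0$, hence $\sigma(\delta_\tau)\to 0$ and, $a-\lambda_N$ being bounded by $a$, the error vanishes. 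As $\lambda_N=\lambda_\tau(N\tau)\to\lambda(t)$ by uniform convergence and continuity of $\lambda$, passing to the limit yields
\[ \int_{\lambda(t)}^a r_0(s)^{2d}\ud s = \frac{\omega_d}{\alpha_d^2}\, t . \]

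It then suffices to differentiate this identity. Set $\Phi(x)=\int_x^a r_0(s)^{2d}\ud s$; the continuity of $r_0$ makes $\Phi$ a $C^1$ function with $\Phi'(x)=-r_0(x)^{2d}$, and the identity reads $\Phi(\lambda(t))=\frac{\omega_d}{\alpha_d^2}t$. At a point $t$ with $\lambda(t)>0$, the lower bound in (\ref{encadrement}) forces $\lambda(t)<a$ for $t>0$, whence $r_0(\lambda(t))>0$ and $\Phi'(\lambda(t))\neq 0$. Thus $\Phi$ is a local $C^1$ diffeomorphism near $\lambda(t)$, and the inverse function theorem shows that $\lambda$ is differentiable at $t$ with
\[ \lambda'(t)=\frac{\omega_d/\alpha_d^2}{\Phi'(\lambda(t))}=-\frac{\omega_d}{\alpha_d^2}\frac{1}{r_0(\lambda(t))^{2d}} . \]

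The main obstacle is the second step: controlling the Riemann-sum error uniformly, which uses both the uniform continuity of $r_0$ and the mesh estimate (\ref{encadrement}) — the latter being exactly where $r_0(a)>0$ enters. One should also note the borderline case excluded by the hypothesis $\lambda(t)>0$: if $r_0(\lambda(t))=0$ (for instance at $t=0$ when $\lambda(0)=a$ and $r_0(a)=0$) the integral identity still holds, but $\Phi'$ vanishes there, which is precisely the infinite initial erosion speed noted after (\ref{EDO intro}).
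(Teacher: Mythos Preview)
Your argument is correct and takes a genuinely different route from the paper. The paper works directly on difference quotients: for fixed $t$ and small $\ee>0$ it localises (\ref{encadrement}) between $\lambda_p$ and $\lambda_{p+q}$ (with $p=[t/\tau]$, $p+q=[(t+\ee)/\tau]$), sums, divides by $\ee$, lets $\tau\to 0$ and then $\ee\to 0$ to squeeze the right and left derivatives. Your approach instead sums the exact Euler--Lagrange relation from $n=0$ to $N-1$, identifies the sum as a Riemann approximation of $\int_{\lambda_N}^a r_0^{2d}$, and passes to the limit once to obtain the closed identity $\int_{\lambda(t)}^a r_0(s)^{2d}\,\ud s=\frac{\omega_d}{\alpha_d^2}t$, from which the ODE follows by the inverse function theorem.

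What each buys: the paper's squeeze is shorter and adapts immediately to discontinuous $r_0$ (yielding the two-sided bounds on the growth ratio in the next proposition). Your integral identity is more informative: it gives an implicit formula for $\lambda$, shows at once that $\lambda$ is $C^1$ on $\{\lambda>0\}$, and, since $\Phi$ is strictly monotone, proves uniqueness of the limit --- hence convergence of the whole family $(\lambda_\tau)$ and not merely of a subsequence. Two small points worth making explicit in your write-up: the exact recurrence is available for every $n\le N-1$ because $\lambda_N>0$ for $\tau$ small (so no step hits the boundary $0$), and the $t=0$ case is covered since the standing assumption $r_0(a)>0$ makes $\Phi'(a)\neq 0$.
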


\begin{proof}
Let $\ee>0$ be such that $\lambda(t+\ee)>0$. For all $\tau>0$, write
\[ t=p\tau+\eta \qquad \text{and} \qquad t+\ee=(p+q)\tau + \eta' \]
with $p,q\in \NN$ and $0\leqslant \eta,\eta'<\tau$. Then
\[ \lambda_p \to \lambda(t) \qquad \text{and} \qquad \lambda_{p+q} \to \lambda(t+\ee) \]
when $\tau \to 0$. \\
For all integers between $p$ and $p+q$, use the double inequality (\ref{encadrement}) with $\lambda_p$ and $\lambda_{p+q}$ instead of $0$ and $a$
\[ \frac{\omega_d\tau}{r_0(\lambda_{p+q})^{2d}\alpha_d^2}
\leqslant \lambda_n-\lambda_{n+1} \leqslant
\frac{\omega_d\tau}{r_0(\lambda_p)^{2d}\alpha_d^2} .  \]
Summing on $n$ from $p$ to $p+q-1$, we obtain
\[ \frac{\omega_d}{r_0(\lambda_{p+q})^{2d}\alpha_d^2} q\tau
\leqslant \lambda_p-\lambda_{p+q} \leqslant
\frac{\omega_d}{r_0(\lambda_p)^{2d}\alpha_d^2} q\tau .  \]
Writing $q\tau$ in term of $\ee$, $\eta$ and $\eta'$ and dividing by $\ee$, one can see that
\[ \frac{\omega_d}{r_0(\lambda_{p+q})^{2d}\alpha_d^2} \frac{\ee+\eta-\eta'}{\ee}
\leqslant \frac{\lambda_p-\lambda_{p+q}}{\ee} \leqslant
\frac{\omega_d}{r_0(\lambda_p)^{2d}\alpha_d^2} \frac{\ee+\eta-\eta'}{\ee} .  \]
Passing to the limit $\tau \to 0$ we have
\[ \frac{\omega_d}{r_0(\lambda(t+\ee))^{2d}\alpha_d^2}
\leqslant -\frac{\lambda(t+\ee)-\lambda(t)}{\ee} \leqslant
\frac{\omega_d}{r_0(\lambda(t))^{2d}\alpha_d^2} .  \]
Passing to the limit $\ee \to 0$ yields the right derivative of $\lambda$ on $t$. Proceeding similarly (taking care of the sign), we obtain
\[ \frac{\omega_d}{r_0(\lambda(t))^{2d}\alpha_d^2}
\leqslant -\frac{\lambda(t-\ee)-\lambda(t)}{-\ee} \leqslant
\frac{\omega_d}{r_0(\lambda(t-\ee))^{2d}\alpha_d^2} .  \]
Then, passing to the limit $\ee \to 0$, we obtain the left derivative.
\end{proof}

Hence, we now understand the minimizing movement for a radially non-increasing function whose highest non empty superlevel set is not reduced to a point. The ordinary differential equation (\ref{loi}) contains a difficulty if $r_0(a)=0$ because, in that case, the initial datum is truncated with an infinite initial speed. After the starting point, the movement is clear. Indeed, on every interval of the form $[T,+\infty[$ with $T>0$, the minimizing movement exists (we have the double inequality (\ref{encadrement}) except for the first times $\lambda_0, \lambda_1,\dots$ and so the uniform convergence remains) and follows the law (\ref{loi}).

We assumed the continuity of $r_0$ on $[0,a[$. Nevertheless, (\ref{loi}) remains valid even if $r_0$ is not continuous but only non-increasing. At the points where $r_0$ is continuous, we have ($\ref{loi}$). At any other points, we do not know if $\lambda$ is differentiable but the growth ratio is controlled. In the sequel we will write $r_0(s+)$ and $r_0(s-)$ for the right and left limits at any point $s$. Note that $r_0$ is non-increasing and therefore left and right limits exist (observe that $r_0$ is left continuous).

\begin{propo}
At every point $t$ where $\lambda(t) > 0$, we have the following control of the growth ratio
\[ -\frac{\omega_d}{r_0(\lambda(t)+)^{2d}\alpha_d^2}
\leqslant \liminf_{\ee\to 0} \frac{\lambda(t+\ee)-\lambda(t)}{\ee}
\leqslant \limsup_{\ee\to 0} \frac{\lambda(t+\ee)-\lambda(t)}{\ee}
\leqslant -\frac{\omega_d}{r_0(\lambda(t)-)^{2d}\alpha_d^2} \]
for both $\ee \to 0^+$ and $\ee \to 0^-$.
\end{propo}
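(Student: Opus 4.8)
The plan is to reuse the discrete estimate already obtained in the proof of the previous proposition, which never used continuity of $r_0$, and to replace the final passage to the limit by an argument resting only on the monotonicity and the one-sided limits of $r_0$. Recall that, writing $t = p\tau + \eta$ and $t + \ee = (p+q)\tau + \eta'$ with $0 \leqslant \eta, \eta' < \tau$, summing the double inequality (\ref{encadrement}) between the indices $p$ and $p+q$ gives
\[ \frac{\omega_d}{r_0(\lambda_{p+q})^{2d}\alpha_d^2}\, q\tau \leqslant \lambda_p - \lambda_{p+q} \leqslant \frac{\omega_d}{r_0(\lambda_p)^{2d}\alpha_d^2}\, q\tau, \]
with $\lambda_p \to \lambda(t)$, $\lambda_{p+q} \to \lambda(t+\ee)$ and $q\tau \to \ee$ as $\tau \to 0$. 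Since $r_0$ is non-increasing and left-continuous, I would first note that $r_0(\lambda(t)+) \leqslant r_0(\lambda(t)) = r_0(\lambda(t)-)$, which is exactly what separates the two bounds of the statement.

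First I would treat $\ee \to 0^+$. Fix $\delta > 0$. Because $\lambda_p \to \lambda(t)$, for $\tau$ small enough $\lambda_p \leqslant \lambda(t) + \delta$, hence $r_0(\lambda_p) \geqslant r_0(\lambda(t)+\delta)$ by monotonicity; inserting this into the right-hand bound and letting $\tau \to 0$ then $\delta \to 0$ yields
\[ \lambda(t) - \lambda(t+\ee) \leqslant \frac{\omega_d}{r_0(\lambda(t)+)^{2d}\alpha_d^2}\,\ee. \]
Dividing by $\ee$ and taking $\liminf_{\ee \to 0^+}$ gives the left inequality. Symmetrically, from the left-hand bound I would use $\lambda_{p+q} \geqslant \lambda(t+\ee) - \delta$, so $r_0(\lambda_{p+q}) \leqslant r_0(\lambda(t+\ee)-\delta)$; after $\tau \to 0$ and $\delta \to 0$ (with $r_0(\lambda(t+\ee)-) = r_0(\lambda(t+\ee))$ by left-continuity) this produces
\[ \lambda(t) - \lambda(t+\ee) \geqslant \frac{\omega_d}{r_0(\lambda(t+\ee))^{2d}\alpha_d^2}\,\ee. \]
Since $\lambda$ is continuous and non-increasing, $\lambda(t+\ee) \uparrow \lambda(t)$ as $\ee \downarrow 0$, so left-continuity gives $r_0(\lambda(t+\ee)) \to r_0(\lambda(t)-)$; taking $\limsup_{\ee \to 0^+}$ then yields the right inequality.

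The case $\ee \to 0^-$ is handled identically after exchanging the roles of $\lambda_p$ and $\lambda_{p+q}$ and tracking the sign of $\ee$: this time $\lambda(t+\ee) \downarrow \lambda(t)$, the right limit $r_0(\lambda(t)+)$ appears in the $\liminf$ bound and $r_0(\lambda(t)-)$ in the $\limsup$ bound, giving exactly the same two-sided control. The main obstacle is the bookkeeping of the three nested limits $\tau \to 0$, $\delta \to 0$, $\ee \to 0$ together with the correct side from which each of $\lambda_p$ and $\lambda_{p+q}$ approaches its limit; monotonicity of $r_0$ is what validates each sandwich, while left-continuity is what pins down the limiting bound. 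I would finally record that when $\lambda(t)$ sits at the top of the support of $r_0$, so that $r_0(\lambda(t)+)=0$, the lower bound is $-\infty$ and holds trivially, in agreement with the infinite-speed phenomenon noted after (\ref{loi}).
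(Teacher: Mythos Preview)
Your limiting argument via the $\delta$-sandwich is correct and is a clean variant of the paper's liminf/limsup manipulation; once the discrete inequality
\[
\frac{\omega_d}{r_0(\lambda_{p+q})^{2d}\alpha_d^2}\,q\tau \leqslant \lambda_p-\lambda_{p+q} \leqslant \frac{\omega_d}{r_0(\lambda_p)^{2d}\alpha_d^2}\,q\tau
\]
is in hand, both routes reach the stated bounds.

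There is, however, a genuine gap at your starting point. You assert that the discrete estimate from the previous proposition ``never used continuity of $r_0$''. This is not accurate: that estimate was obtained from the \emph{equality} (\ref{lambda}),
\[
r_0(\lambda_{n+1})^d\int_{\lambda_{n+1}}^{\lambda_n} r_0(s)^d\,\ud s=\frac{\omega_d}{\alpha_d^2}\tau,
\]
which in turn comes from $f_n'(\lambda_{n+1})=0$, and this vanishing of the derivative was established in Section~\ref{forme min} \emph{only} under the assumption that $r_0$ is continuous. When $r_0$ has jumps, $\lambda_{n+1}$ is merely defined as $\sup\{s:f_n'(s)\leqslant 0\}$, and $f_n'$ need not vanish there. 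The paper therefore re-derives the two one-step bounds directly from this sup characterization, obtaining $\lambda_n-\lambda_{n+1}\leqslant \omega_d\tau/(r_0(\lambda_n)^{2d}\alpha_d^2)$ and $\lambda_n-\lambda_{n+1}\geqslant \omega_d\tau/(r_0(\lambda_{n+1}-)^{2d}\alpha_d^2)$ by approaching $\lambda_{n+1}$ from the right and from the left. Once this is done (and using left-continuity of $r_0$ together with monotonicity to sum between $p$ and $p+q$), your displayed inequality follows and your $\delta$-argument goes through. So the fix is local: insert the paper's re-derivation of the one-step bounds before invoking them, rather than citing the continuous-case estimate as if it carried over automatically.
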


\begin{proof}
May $r_0$ be continuous or not, we still have the existence of the sequence $(\lambda_n)$ and, by definition,
\[ \lambda_{n+1} = \sup (s \ | \ f_n'(s)\leqslant 0 ) . \]
Noticing $(\lambda_n)$ is decreasing (for non null terms) and $f_n'$ increasing we have for (almost) every $s\in]\lambda_{n+1},\lambda_n[$, $f_n'(s) \geqslant 0$ thus
\[ r_0(s)^d \int_s^{\lambda_n}r_0(u)^d \ud u
\leqslant \frac{\tau\omega_d}{\alpha_d^2} . \]
So
\[ r_0(\lambda_n)^{2d}(\lambda_n-s) \leqslant \frac{\tau\omega_d}{\alpha_d^2} . \]
Now take $s\to \lambda_{n+1}$ to have
\[  \lambda_n-\lambda_{n+1} \leqslant \frac{\omega_d\tau}{r_0(\lambda_n)^{2d}\alpha_d^2} . \]
Identically, for (almost) every $s<\lambda_{n+1}$, $f_n'(s) \leqslant 0$ and then
\[ r_0(s)^d \int_s^{\lambda_n}r_0(u)^d \ud u
\geqslant \frac{\tau\omega_d}{\alpha_d^2} . \]
So
\[ r_0(s)^{2d}(\lambda_n-s) \geqslant \frac{\tau\omega_d}{\alpha_d^2} . \]
Letting $s\to \lambda_{n+1}$, one gets
\[  \lambda_n-\lambda_{n+1} \geqslant \frac{\omega_d\tau}{r_0(\lambda_{n+1}-)^{2d}\alpha_d^2} . \]
For global estimation, we have thus exactly double inequality(\ref{encadrement}) and then the existence of the minimizing movement.

Similarly, to satisfy ($\ref{loi}$), the same technique works up to the following small modifications as follow. Notice that $r_0(\lambda_{p+q}-)\leqslant r_0(\lambda_{p+q+1})$ and $\lambda_{p+q+1} \to \lambda(t+\ee)$ because
$|\lambda_{p+q}-\lambda_{p+q+1}| \leqslant \frac{\omega_t}{r_0(a)^{2d}\alpha_d^2}\tau$. Therefore
\begin{equation} \label{coince} \frac{\omega_d}{r_0(\lambda_{p+q+1})^{2d}\alpha_d^2} \frac{\ee+\eta-\eta'}{\ee}
\leqslant \frac{\lambda_p-\lambda_{p+q}}{\ee} \leqslant
\frac{\omega_d}{r_0(\lambda_p)^{2d}\alpha_d^2} \frac{\ee+\eta-\eta'}{\ee} .  \end{equation}
The values of the sequence $(\lambda_n)$ depends on $\tau$ therefore, for $m=[s/\tau]$ it is not clear how $\lambda_m \to \lambda(s)$ when $\tau$ vanishes hence, we use the inequalities
\[ \frac{1}{r_0(\lambda(s)-)} \leqslant
\liminf_{\tau \to 0} \frac{1}{r_0(\lambda_m)} 
\leqslant \limsup_{\tau \to 0} \frac{1}{r_0(\lambda_m)}
\leqslant \frac{1}{r_0(\lambda(s)+)} .\]
Passing to lim inf in the left hand side of double inequality (\ref{coince}) (with $s=t+\ee$ and $m=p+q+1$) and passing to the lim sup in the right hand side (with $s=t$ and $m=p$) we have
\[ \frac{\omega_d}{r_0(\lambda(t+\ee)-)^{2d}\alpha_d^2}
\leqslant \frac{\lambda(t)-\lambda(t+\ee}{\ee} \leqslant
\frac{\omega_d}{r_0(\lambda(t)+)^{2d}\alpha_d^2} .  \]
Thanks to monotonicity ($t\mapsto \lambda(t)$ is non-increasing) we have
\[ \frac{1}{r_0(\lambda(t+2\ee))} \leqslant \frac{1}{r_0(\lambda(t+\ee)-)}
\quad \text{and} \quad
\frac{1}{r_0(\lambda(t)+)} \leqslant \frac{1}{r_0(\lambda(t-\ee))} . \]
So
\[ \frac{\omega_d}{r_0(\lambda(t+2\ee))^{2d}\alpha_d^2}
\leqslant \frac{\lambda(t)-\lambda(t+\ee)}{\ee} \leqslant
\frac{\omega_d}{r_0(\lambda(t-\ee))^{2d}\alpha_d^2} .  \]
As $t+2\ee$ decreases to $t$ when $\ee \to 0^+$ then $\lambda(t+2\ee)$ increases to $\lambda(t)$ and so $r_0(\lambda(t+2\ee)) \to r_0(\lambda(t)-)$ when $\ee \to 0^+$. Similarly $r_0(\lambda(t-\ee)) \to r_0(\lambda(t)+)$ when $\ee \to 0^+$. Thus we can pass to lim inf and lim sup in the previous inequalities to have
\[ -\frac{\omega_d}{r_0(\lambda(t)+)^{2d}\alpha_d^2}
\leqslant \liminf_{\ee\to 0^+} \frac{\lambda(t+\ee)-\lambda(t)}{\ee}
\leqslant \limsup_{\ee\to 0^+} \frac{\lambda(t+\ee)-\lambda(t)}{\ee}
\leqslant -\frac{\omega_d}{r_0(\lambda(t)-)^{2d}\alpha_d^2} . \]
Using exactly the same arguments for the left derivative we obtain
\[ -\frac{\omega_d}{r_0(\lambda(t)+)^{2d}\alpha_d^2}
\leqslant \liminf_{\ee\to 0^+} \frac{\lambda(t-\ee)-\lambda(t)}{-\ee}
\leqslant \limsup_{\ee\to 0^+} \frac{\lambda(t-\ee)-\lambda(t)}{-\ee}
\leqslant -\frac{\omega_d}{r_0(\lambda(t)-)^{2d}\alpha_d^2} . \]
\end{proof}

\section{Conclusion.}

Let us recall our objective: we were investigating the best way to decrease the $L^1$ relaxation of the scale invariant Willmore energy
\[ W(u) = \frac{1}{(d-1)^{d-1}}\mathlarger\int_{\RR{d}} \left| \Div \left( \frac{\nabla u}{|\nabla u|} \right) \right|^{d-1}
|\nabla u| \ud x . \]
We considered a radially non-increasing initial function $u_0$ and its radius function $r_0$ as in figure \ref{initial}.

\begin{figure}[!ht]

\begin{center}
\begin{tikzpicture}[scale=0.6]
\draw (0,0) node[below left] {$0$} ;
\draw [->] (-1,0) -- (10,0) node[below right] {$|x|$} ;
\draw [->] (0,-1) -- (0,6) node[left] {$u_0$} ;
\draw [thick] (0,5) -- (1,5) 
plot [domain=1:4,samples=100] (\x,{ 5-(2/9)*(\x-1)^2 })
plot [domain=4:6,samples=100] (\x,{ 1-(1/8)*(\x-6)^3 }) 
(6,1) -- (7,1)
plot [domain=7:9,samples=100] (\x,{ 1-(1/8)*(\x-7)^3 }) ;
\draw [dashed,thick] (4,3) -- (4,2);
\draw (-0.2,4) -- (3.5, 4)  (-0.5,4) node {$\lambda$} ;
\draw (6.5,1.3) node {$A$} ;
\draw (4.3,2.5) node {$B$} ;
\draw [dashed] (3.12,4.2) -- (3.12,-0.1) ;
\draw [<->] (0.1,-0.2) -- (3.1,-0.2) ;
\draw (1.6,-0.8) node {$r_0(\lambda)$} ;
\end{tikzpicture}
\end{center}

\caption{Initial condition $u_0$.}
\label{initial}

\end{figure}
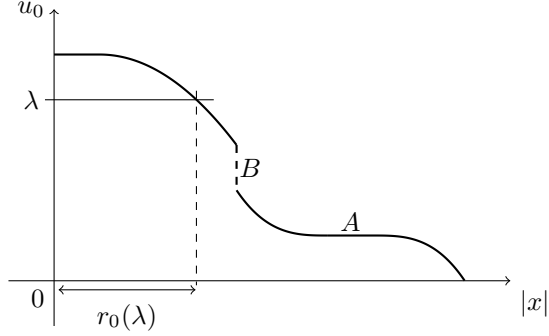

For $\OO=B(0,R_0)$, we proved that the minimizing movement in the space
\[ L^1_{c+} (\OO) = \left\{ u\in L^1(\RR{d}) \ \big| \ u\geqslant 0,
\text{ supp}(u)\subset \OO  \right\} \]
is given by a truncation of the initial function. Collecting our above results, we proved the following.

\begin{thm} \label{mvt min rad}
Let $u_0\in L^1_{c+} (\OO)$ be a bounded and radially non-increasing initial condition and $r_0$ its radius function. If the highest non empty superlevel set of $u_0$ is not reduced to $\{0\}$ (meaning $r_0(\sup u_0)>0$) then there exists a minimizing movement for $\WW$ starting from $u_0$. \\
Moreover, there exists $\lambda:[0,+\infty[\to \RR{+}$ such that the minimizing movement is given by the map
\[ \begin{array}{rcl}
\RR{+} & \to & L^1_{c+} (\OO) \\
t  & \mapsto & u_t \end{array} \]
with
\[ u_t(x)=\min(u_0(x),\lambda(t)). \]
Finally the function $\lambda$ is Lipschitz continuous, non-increasing, satisfies $\lambda(0)=\sup u_0$ and vanishes in finite time.
\end{thm}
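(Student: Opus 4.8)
The strategy is to collect the three preceding subsections into a single statement by producing one \emph{explicit} minimizing sequence and passing to the limit, rather than proving any new estimate. First I would check that the scheme is well posed: starting from $u_0$ and minimizing $u\mapsto \WW(u,\OO)+\frac1{2\tau}\|u-u_n\|_{L^1}^2$ over $L^1_{c+}(\OO)$, Theorem \ref{radialisation} lets me restrict the search to radially non-increasing competitors, the reduction of Section \ref{forme min} (read with $u_n$ in the fidelity term in place of $u_0$) further restricts it to truncations of $u_n$, and the resulting scalar functional $f$ is continuous on the compact interval $[0,\sup u_n]$, hence attains its minimum. Selecting this truncation at each step yields a genuine minimizing sequence $(u_n)$. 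Since a truncation of a truncation of $u_0$ is again a truncation of $u_0$, a one-line induction gives $u_n(x)=\min(u_0(x),\lambda_n)$, equivalently the radius function of $u_n$ equals $r_0(t)\ind{t<\lambda_n}$, with $\lambda_0=\sup u_0=a$. The whole scheme therefore collapses onto the scalar recursion on $(\lambda_n)$ already analysed.

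On the scalar side I would simply invoke the work already done. The double inequality (\ref{encadrement}) --- whose upper bound is finite \emph{precisely because} the hypothesis $r_0(\sup u_0)=r_0(a)>0$ is in force --- makes the piecewise-affine interpolant $\lambda_\tau$ equi-Lipschitz in $\tau$, while its lower bound forces $\lambda_\tau(t)=0$ once $t\geqslant a\,r_0(0)^{2d}\alpha_d^2/\omega_d$. Ascoli's theorem then extracts, along some $\tau_i\to0$, a uniform limit $\lambda:[0,+\infty[\to[0,a]$ that is Lipschitz continuous, non-increasing, satisfies $\lambda(0)=a=\sup u_0$, and vanishes in finite time; these are exactly the asserted properties of $\lambda$.

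The only genuinely new point --- and where I expect the real work to sit --- is converting this scalar convergence into the $L^1$-uniform convergence of \emph{functions} demanded by the definition of a minimizing movement. For each $t$, both $u_\tau(t)=u_{[t/\tau]}$ and the candidate limit $u_t=\min(u_0,\lambda(t))$ are radial with radius functions of the form $r_0(\cdot)\ind{\cdot<\mu}$, so Lemma \ref{ev-ga}, together with the concentricity of their superlevel balls, gives
\[ \|u_\tau(t)-u_t\|_{L^1}=\alpha_d\left|\int_{\lambda(t)}^{\lambda_{[t/\tau]}} r_0(s)^d\,\ud s\right|\leqslant \alpha_d\,r_0(0)^d\,\bigl|\lambda_{[t/\tau]}-\lambda(t)\bigr|. \]
Splitting $|\lambda_{[t/\tau]}-\lambda(t)|\leqslant|\lambda_{[t/\tau]}-\lambda_\tau(t)|+|\lambda_\tau(t)-\lambda(t)|$, the first term is $O(\tau)$ by the interpolation remark and the second tends to $0$ uniformly in $t$; hence $\sup_t\|u_\tau(t)-u_t\|_{L^1}\to0$, so $t\mapsto u_t=\min(u_0,\lambda(t))$ is indeed a minimizing movement. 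Everything else is bookkeeping: the truncation structure yields the claimed form of $u_t$, and the scalar properties of $\lambda$ transfer verbatim.

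A final word on the standing hypothesis. The condition $r_0(\sup u_0)>0$ cannot be dropped in this clean form: if it fails, the upper bound in (\ref{encadrement}) degenerates, the family $\lambda_\tau$ is no longer equi-Lipschitz near $t=0$, and the Ascoli step survives only on intervals $[T,+\infty[$ with $T>0$ --- precisely the infinite initial erosion speed flagged in the introduction.
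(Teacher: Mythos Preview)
Your proposal is correct and follows exactly the paper's own route: the theorem is stated in the paper as a summary (``Collecting our above results, we proved the following''), and you do precisely this collecting --- Theorem~\ref{radialisation} for the radial reduction, Section~\ref{forme min} for the truncation structure, and inequality~(\ref{encadrement}) plus Ascoli for the scalar limit $\lambda$. The one step you make explicit that the paper leaves tacit is the passage from uniform convergence of $\lambda_\tau$ to uniform $L^1$-convergence of $u_\tau$; your computation via Lemma~\ref{ev-ga},
\[
\|u_\tau(t)-u_t\|_{L^1}=\alpha_d\left|\int_{\lambda(t)}^{\lambda_{[t/\tau]}} r_0(s)^d\,\ud s\right|\leqslant \alpha_d\,r_0(0)^d\,\bigl|\lambda_{[t/\tau]}-\lambda(t)\bigr|,
\]
is correct and is a genuine (if easy) addition to the paper's presentation, which never writes this translation down.
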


Moreover, we obtained some interesting information on $\lambda$ depending on the continuity of $r_0$.

\begin{thm} \label{EDO}
With the notations of Theorem \ref{mvt min rad}, for all $t\in\RR{+}$ such that $\lambda(t)>0$.
If $\lambda(t)$ is a continuity point of $r_0$ then $\lambda$ is differentiable on $t$ and
\[ \lambda'(t) = -\frac{\omega_d}{\alpha_d^2} \frac{1}{r_0(\lambda(t))^{2d}} \]
Otherwise, we have the following control on the growth ratio
\[ -\frac{\omega_d}{r_0(\lambda(t)+)^{2d}\alpha_d^2}
\leqslant \liminf_{\ee\to 0} \frac{\lambda(t+\ee)-\lambda(t)}{\ee}
\leqslant \limsup_{\ee\to 0} \frac{\lambda(t+\ee)-\lambda(t)}{\ee}
\leqslant -\frac{\omega_d}{r_0(\lambda(t)-)^{2d}\alpha_d^2} \]
for both $\ee \to 0^+$ and $\ee \to 0^-$.
\end{thm}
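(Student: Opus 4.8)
The plan is to assemble the two propositions proved in Section~\ref{partie mouvement}, since Theorem~\ref{EDO} is precisely their combination: the differentiability law~(\ref{loi}) at continuity points of $r_0$ and the growth-ratio control at the remaining points. Both rest on a single discrete estimate coming from the characterization~(\ref{lambda}) of the minimizer at each step, followed by a squeeze argument as $\tau \to 0$ and then $\ee \to 0$.

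First I would recall that, for $\tau$ small enough, the minimizing sequence $(\lambda_n)$ satisfies
\[ r_0(\lambda_{n+1})^d\int_{\lambda_{n+1}}^{\lambda_n} r_0(s)^d \ud s = \frac{\omega_d}{\alpha_d^2}\tau . \]
Bounding this integral from above and below through the monotonicity of $r_0$ yields a double inequality for $\lambda_n-\lambda_{n+1}$ of the form~(\ref{encadrement}), but with $r_0(\lambda_n)$ and $r_0(\lambda_{n+1})$ in place of $r_0(0)$ and $r_0(a)$. Summing these inequalities over the block of indices whose times fill the interval $[t,t+\ee]$, so that $\lambda_p\to\lambda(t)$ and $\lambda_{p+q}\to\lambda(t+\ee)$, and dividing by $\ee$, I would bracket the difference quotient $(\lambda_p-\lambda_{p+q})/\ee$ between two quantities built from $r_0(\lambda_p)$ and $r_0(\lambda_{p+q})$.

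The heart of the argument is then the passage to the limit $\tau \to 0$, using the uniform convergence $\lambda_\tau\to\lambda$ coming from the Ascoli argument. If $\lambda(t)$ is a continuity point of $r_0$, both bracketing quantities converge to $\omega_d/(\alpha_d^2\,r_0(\lambda(t))^{2d})$, so the squeeze pins down the incremental ratio of $\lambda$ over $[t,t+\ee]$; letting $\ee\to 0$ and treating the right and left increments separately, with due attention to signs, yields the derivative~(\ref{loi}). At a discontinuity of $r_0$ I would instead retain the one-sided limits $r_0(\lambda(t)\pm)$ and use the monotonicity of $t\mapsto\lambda(t)$ to replace the $\tau$-dependent values $r_0(\lambda_m)$ by $r_0(\lambda(s)\pm)$ via inequalities such as $r_0(\lambda(t+2\ee))\leqslant r_0(\lambda(t+\ee)-)$, thereby recovering the $\liminf$/$\limsup$ control.

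I expect the main obstacle to be exactly this interchange of the two limits $\tau\to 0$ and $\ee\to 0$ at the discontinuity points of $r_0$. Because the sequence $(\lambda_n)$ itself depends on $\tau$, it is not clear that $\lambda_{[s/\tau]}$ converges to $\lambda(s)$, and one can only trap $1/r_0(\lambda_{[s/\tau]})$ between $1/r_0(\lambda(s)-)$ and $1/r_0(\lambda(s)+)$. Combining these one-sided bounds with the monotonicity of $\lambda$ to produce sharp $\liminf$/$\limsup$ estimates is the delicate step; away from the discontinuities everything collapses to the clean squeeze and the remaining computation is routine.
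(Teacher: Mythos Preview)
Your proposal is correct and follows essentially the same route as the paper: derive the refined two-sided bound on $\lambda_n-\lambda_{n+1}$ from~(\ref{lambda}) and the monotonicity of $r_0$, sum over the block $p,\dots,p+q$, divide by $\ee$, let $\tau\to 0$ using the uniform convergence from Ascoli, and then let $\ee\to 0$, treating continuity points by a direct squeeze and discontinuity points via the one-sided bounds $1/r_0(\lambda(s)-)\leqslant\liminf 1/r_0(\lambda_m)\leqslant\limsup 1/r_0(\lambda_m)\leqslant 1/r_0(\lambda(s)+)$ together with monotonicity tricks such as $r_0(\lambda(t+2\ee))\leqslant r_0(\lambda(t+\ee)-)$. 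You have correctly identified the only delicate step, namely the interchange of $\tau\to 0$ and $\ee\to 0$ at discontinuities of $r_0$, and your outline of how to handle it matches the paper's.
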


\noindent An illustration of the minimizing movement is shown in figure \ref{evolution}.

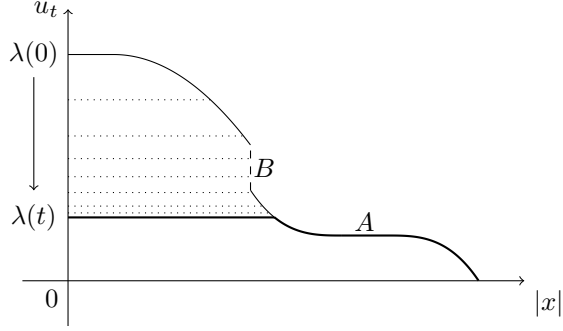
\begin{figure}[!ht]

\begin{center}
\begin{tikzpicture}[scale=0.6]
\draw (0,0) node[below left] {$0$} ;
\draw [->] (-1,0) -- (10,0) node[below right] {$|x|$} ;
\draw [->] (0,-1) -- (0,6) node[left] {$u_t$} ;
\draw (0,5) -- (1,5) 
plot [domain=1:4,samples=100] (\x,{ 5-(2/9)*(\x-1)^2 })
plot [domain=4:4.52,samples=100] (\x,{ 1-(1/8)*(\x-6)^3 }) ;
\draw [thick] (0,1.4) -- (4.52,1.4)
plot [domain=4.52:6,samples=100] (\x,{ 1-(1/8)*(\x-6)^3 }) 
(6,1) -- (7,1)
plot [domain=7:9,samples=100] (\x,{ 1-(1/8)*(\x-7)^3 }) ;
\draw [dotted] (0,4) -- (3.12,4) ;
\draw [dotted] (0,3.2) -- (3.84,3.2) ;
\draw [dotted] (0,2.7) -- (4,2.7) ;
\draw [dotted] (0,2.3) -- (4,2.3) ;
\draw [dotted] (0,1.95) -- (4.03,1.95) ;
\draw [dotted] (0,1.65) -- (4.26,1.65) ;
\draw [dotted] (0,1.5) -- (4.4,1.5) ;
\draw [dashed] (4,2) -- (4,3) ;
\draw (6.5,1.3) node {$A$} ;
\draw (4.3,2.5) node {$B$} ;
\draw (-0.75,5) node {$\lambda(0)$} ;
\draw [->] (-0.75,4.5) -- (-0.75,2) ;
\draw (-0.75,1.4) node {$\lambda(t)$} ;
\end{tikzpicture}
\end{center}

\caption{Minimizing movement $u_t$.}
\label{evolution}

\end{figure}

In zone $B$, the erosion speed is locally constant and in zone $A$, it possibly jumps.

\section*{Perspectives.}

We were not able to see what happens if $r_0(\sup u_0)=0$. Indeed, the minimizing sequence convergence is given by the Ascoli theorem and we need a uniform control on the speed decrease of $(\lambda_n)$. Looking at the ODE (Theorem \ref{EDO}), the initial speed is non-finite if $r_0(\sup u_0)=0$ therefore we cannot hope to have such a control. For now, we do not know if, in that case, the flow consists on an erosion with a non-finite initial speed. Otherwise, the flow may consist on cutting off a small height and then the erosion would begin following the ODE.

More generally, we can wonder what is the flow of the characteristic function of any set $E$. Is it an erosion? Or do the superlevel sets evolve? Maybe both? We proved in this paper that for a ball $E=B(0,R)$, the gradient flow starting from $u_0=\ind{B(0,R)}$ is an erosion given by
\[ u_t = (1-c_Rt)\ind{B(0,R)} \]
where $c_R=\frac{\omega_d}{\alpha_d^2R^{2d}}$. However, balls have minimal scale invariant Willmore energy which can explain that the superlevel sets do not evolve. The only way to reduce their energies is then to be smaller and smaller. If the set $E$ is not minimal then we can have both behaviours: decrease of the height and/or deformation of the superlevel sets. Perhaps if the Willmore energy is small enough then it is better to be rigid on superlevel sets and to diminish the height.

\appendix

\section{Appendix: Proof of the lemmas.}

We prove the technical lemmas.

\renewcommand{\thethmref}{\ref{inegalite}}

\begin{thmref}[coarea inequality]
Let $u\in L^1(\OO)$, we have
\[ \int_{-\infty}^{+\infty} \Wp(\{u\geqslant t\},\OO) \ud t \quad \leqslant \quad \Wp(u,\OO) .\]
\end{thmref}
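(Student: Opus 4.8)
The plan is to reduce the relaxed inequality to the classical smooth coarea formula (Proposition \ref{coarea regulier}) by means of a recovery sequence and Fatou's lemma. We may assume $\Wp(u,\OO)<+\infty$, otherwise there is nothing to prove. By definition of the relaxed energy, I would choose a sequence $u_n\in C^2_c(\OO)$ with $u_n\to u$ in $L^1(\OO)$ and $W_p(u_n)\to\Wp(u,\OO)$, passing to a subsequence so that the $\liminf$ is a genuine limit. Since each $u_n$ is smooth, Proposition \ref{coarea regulier} applies and yields
\[ W_p(u_n)=\int_{-\infty}^{+\infty} W_p\bigl(\od\{u_n\geqslant t\}\bigr)\,\ud t. \]

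Next I would transfer the $L^1$ convergence from the functions to their superlevel sets. By Lemma \ref{ev-ga},
\[ \int_{-\infty}^{+\infty}\bigl|\{u_n\geqslant t\}\,\Delta\,\{u\geqslant t\}\bigr|\,\ud t=\int_{\RR{d}}|u_n-u|\,\ud x\longrightarrow 0, \]
so the nonnegative integrand tends to $0$ in $L^1(\RR{})$ and, up to a further subsequence, $\{u_n\geqslant t\}\to\{u\geqslant t\}$ in $L^1(\OO)$ for almost every $t$. Moreover, by the Federer--Morse--Sard corollary applied to each $u_n$, for almost every $t$ the level set $\{u_n=t\}$ carries no critical point, so $\{u_n\geqslant t\}$ is a $C^2$ regular set with $\od\{u_n\geqslant t\}=\{u_n=t\}$; a countable intersection of these full-measure sets of $t$ makes this regularity hold simultaneously for all $n$ at almost every $t$. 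For such $t$ the sequence $(\{u_n\geqslant t\})_n$ is then an admissible competitor in the relaxation defining $\Wp(\{u\geqslant t\},\OO)$, whence
\[ \Wp(\{u\geqslant t\},\OO)\ \leqslant\ \liminf_{n\to+\infty} W_p\bigl(\od\{u_n\geqslant t\}\bigr)\qquad\text{for a.e. }t. \]

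Finally I would integrate this pointwise bound in $t$ and invoke Fatou's lemma, which is legitimate because the integrands are nonnegative:
\[ \int_{-\infty}^{+\infty}\Wp(\{u\geqslant t\},\OO)\,\ud t\ \leqslant\ \int_{-\infty}^{+\infty}\liminf_{n}W_p\bigl(\od\{u_n\geqslant t\}\bigr)\,\ud t\ \leqslant\ \liminf_{n}\int_{-\infty}^{+\infty}W_p\bigl(\od\{u_n\geqslant t\}\bigr)\,\ud t, \]
and the last integral equals $\liminf_n W_p(u_n)=\Wp(u,\OO)$ by the smooth coarea formula above, which gives the claim.

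The step I expect to be the main obstacle is the verification that, for almost every $t$, the smooth level sets $\{u_n\geqslant t\}$ are genuine competitors for $\Wp(\{u\geqslant t\},\OO)$: this forces one to combine the $L^1$ convergence of the level sets (Lemma \ref{ev-ga} plus a diagonal subsequence) with the almost-everywhere $C^2$ regularity furnished by Federer--Morse--Sard, uniformly along the chosen subsequence. A related technical point is that the definition of $\Wp(E,\OO)$ only admits competitors contained in $\OO$: for $t>0$ this is automatic since $u_n$ has compact support in $\OO$, whereas for $t<0$ the level set escapes $\OO$ and one must pass to the complementary bounded set $\OO\smallsetminus\{u_n\geqslant t\}$, using that the Willmore energy depends only on the common boundary $\{u_n=t\}$. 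Once these measurability and admissibility issues are settled, the passage to the limit is a routine Fatou argument.
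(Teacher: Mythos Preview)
Your proof is correct and follows exactly the same route as the paper: take a recovery sequence $u_n\in C^2_c(\OO)$, use Lemma~\ref{ev-ga} to get a.e.\ convergence of superlevel sets, Sard/Federer--Morse--Sard for their regularity, the definition of $\Wp$ for the pointwise-in-$t$ inequality, and finally Fatou together with Proposition~\ref{coarea regulier}. The paper's version is terser and does not spell out the countable-intersection argument or the $t<0$ admissibility issue you flag, but the strategy is identical.
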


\begin{proof}
It is obvious if $\Wp(u,\OO) = +\infty$. If not, let $(u_n)$ be a sequence of functions of class $C^2$ with compact support in $\OO$ such that
\[ u_n \longrightarrow  u \quad \text{in } L^1(\OO) \qquad \text{and} \qquad W_p(u_n) \longrightarrow  \Wp(u,\OO) .\]
Thanks to lemma \ref{ev-ga}, up to a subsequence, we have that for almost every $t$ in $\RR{}$
\[ \{ u_n\geqslant t\} \longrightarrow  \{ u \geqslant t\} \quad \text{in } L^1(\OO) . \]
Moreover, by Sard's Lemma, for almost every $t$, $\{ u_n\geqslant t\}$ is a regular set of $\RR{d}$.
By definition of $\Wp$, we have
\[ \Wp(\{u\geqslant t\},\OO) \ \leqslant \ \liminf_{n\to +\infty} W_p(\{ u_n \geqslant t\}) . \]
Integrating with respect to $t$ and using the Fatou's lemma and the coarea formula (Proposition \ref{coarea regulier}), we get
\[ \begin{array}{lll}
\displaystyle \int_{-\infty}^{+\infty}  \Wp(\{u\geqslant t\},\OO) \ud t & \leqslant &
\liminf \displaystyle \int_{-\infty}^{+\infty} W_p(\{u_n\geqslant t\}) \ud t \\
 & = & \liminf W_p(u_n,\OO) \\
 & = & \Wp(u,\OO) . \end{array} \]
\end{proof}

\renewcommand{\thethmref}{\ref{ev-ga}}

\begin{thmref}[$L^1$ norm]
For measurable functions $u,v : \RR{d} \mapsto \RR{}$ we have
\[ \int_{\RR{d}} |u-v|\ud x =
\int_{-\infty}^{+\infty} |\{u\geqslant t \} \Delta \{ v\geqslant t\}| \ud t .\]
\end{thmref}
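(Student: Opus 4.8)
The plan is to prove the identity pointwise in the $t$-variable for each fixed $x$, then integrate in $x$ and exchange the order of integration via Tonelli's theorem, which is legitimate since the integrand is nonnegative. This is the classical ``layer-cake'' computation, adapted so that it works for functions of arbitrary sign.

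First I would fix $x\in\RR{d}$ and examine the map $t \mapsto \ind{\{u\geqslant t\}}(x) - \ind{\{v\geqslant t\}}(x)$. Since $\ind{\{u\geqslant t\}}(x)=1$ exactly when $t\leqslant u(x)$ and $\ind{\{v\geqslant t\}}(x)=1$ exactly when $t\leqslant v(x)$, the two indicators differ precisely when $t$ lies between $\min(u(x),v(x))$ and $\max(u(x),v(x))$ (the two endpoints forming a set of measure zero in $t$). Integrating the absolute value of this difference over $t\in\RR{}$ therefore returns the length of that interval:
\[ \int_{-\infty}^{+\infty} \left| \ind{\{u\geqslant t\}}(x) - \ind{\{v\geqslant t\}}(x) \right| \ud t = \max(u(x),v(x)) - \min(u(x),v(x)) = |u(x)-v(x)|. \]

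Next I would integrate this identity in $x$ and swap the integrals by Tonelli. The integrand $(x,t)\mapsto \left| \ind{\{u\geqslant t\}}(x)-\ind{\{v\geqslant t\}}(x) \right|$ is nonnegative and jointly measurable, being a Borel function of the measurable maps $(x,t)\mapsto u(x)-t$ and $(x,t)\mapsto v(x)-t$, so the exchange is justified and yields
\[ \int_{\RR{d}} |u-v| \ud x = \int_{-\infty}^{+\infty} \left( \int_{\RR{d}} \left| \ind{\{u\geqslant t\}}(x) - \ind{\{v\geqslant t\}}(x) \right| \ud x \right) \ud t. \]
Finally, for each fixed $t$ the elementary pointwise identity $|\ind{A}-\ind{B}| = \ind{A\Delta B}$ transforms the inner integral into
\[ \int_{\RR{d}} \left| \ind{\{u\geqslant t\}}(x) - \ind{\{v\geqslant t\}}(x) \right| \ud x = |\{u\geqslant t\}\Delta\{v\geqslant t\}|, \]
which is exactly the claimed formula. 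The only points requiring mild care are the joint measurability needed to apply Tonelli and the negligibility of the endpoint values of $t$ in the pointwise step; neither is a genuine obstacle, so there is no hard part here beyond this bookkeeping.
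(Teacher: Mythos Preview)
Your proof is correct and follows essentially the same route as the paper's own argument: fix $x$, compute $\int_{\RR{}}|\ind{\{u\geqslant t\}}(x)-\ind{\{v\geqslant t\}}(x)|\,\ud t=|u(x)-v(x)|$, then integrate in $x$ and swap the order of integration, using $|\ind{A}-\ind{B}|=\ind{A\Delta B}$. You are slightly more careful than the paper in naming Tonelli and noting joint measurability, but the idea is identical.
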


\begin{proof}
Pick $t\in \RR{}$ and $x \in \RR{d}$. We have
\[ |\ind{\{u\geqslant t\}}(x) - \ind{\{v\geqslant t\}}(x)| = 
\left\{ \begin{array}{ll}
1 & \qquad \text{if min($u(x)$,$v(x)$) $<t<$ max($u(x)$,$v(x)$) } \\
0 & \qquad \text{else} . \end{array} \right. \]
Then
\[ \int_{-\infty}^{+\infty} |\ind{\{u\geqslant t\}}(x) - \ind{\{v\geqslant t\}}(x)| \ud t
= \max(u(x),v(x)) - \min(u(x),v(x)) = |u(x)-v(x)| . \]
Integrating on $x\in \RR{d}$, using the Fubini theorem and noticing that
\[ \int_{\RR{d}} |\ind{\{u\geqslant t\}}(x) - \ind{\{v\geqslant t\}}(x)| \ud x =
|\{u\geqslant t \} \Delta \{ v\geqslant t\}|, \]
the result is proved.
\end{proof}

\renewcommand{\thethmref}{\ref{recollement}}

\begin{thmref}[reattachment]
Let $a,b,\alpha,\beta \in \RR{}$ such that $a,b<0$. There exists a decreasing function $f\in C^2([0,1])$ such that
\[ \begin{array}{cc}
f(0)=1 & f(1)=0 \\ f'(0)=a & f'(1)=b \\ f''(0)=\alpha & f''(1)=\beta . \end{array} \]
\end{thmref}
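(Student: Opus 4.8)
The plan is to build $f$ from its derivative. Setting $g = f'$, I recover $f$ by $f(t) = 1 + \int_0^t g(s)\,\ud s$, so that $f(0)=1$ holds automatically, while $f(1)=0$ is equivalent to $\int_0^1 g = -1$; the requirement that $f$ be decreasing is exactly $g < 0$ on $[0,1]$; and the remaining four conditions read $g(0)=a$, $g(1)=b$, $g'(0)=\alpha$, $g'(1)=\beta$. Since $g\in C^1$ yields $f\in C^2$, it suffices to produce a function $g\in C^1([0,1])$ that is everywhere negative, carries the prescribed one-jets $(a,\alpha)$ at $0$ and $(b,\beta)$ at $1$, and satisfies $\int_0^1 g = -1$.

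I expect the only genuine obstacle to be reconciling the sign constraint $g<0$ with the normalization $\int_0^1 g = -1$; matching the boundary jets is a purely local matter. The guiding observation is that, among negative functions carrying the fixed boundary data, the attainable values of $\int_0^1 g$ fill the whole interval $]-\infty,0[$: pushing $g$ far down in the interior makes the integral as negative as desired, whereas keeping $g$ equal to a tiny negative constant on most of $[0,1]$ makes the integral as close to $0$ as desired. As $-1\in\,]-\infty,0[$, the prescribed value is reachable.

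To make this concrete I would first produce a baseline $g_0$. Fix small $\ee,\eta>0$; on $[0,\eta]$ take the cubic Hermite interpolant joining the jet $(a,\alpha)$ at $0$ to the jet $(-\ee,0)$ at $\eta$, set $g_0 \equiv -\ee$ on $[\eta,1-\eta]$, and on $[1-\eta,1]$ take the cubic Hermite interpolant from $(-\ee,0)$ to $(b,\beta)$. This $g_0$ is $C^1$ (values and slopes agree at the junctions $\eta$ and $1-\eta$) and carries the correct boundary jets. For $\eta$ small the slope contributions to the Hermite pieces scale like $\eta$ and hence are dominated by the negative endpoint values, so $g_0<0$ throughout; moreover $\int_0^1 g_0 = J_0$ with $J_0\to 0^-$ as $\ee,\eta\to 0$. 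Choosing $\ee,\eta$ so that $-1<J_0<0$ fixes the baseline.

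Finally I would correct the integral downward without disturbing anything else. Pick $\ff\in C^\infty_c(\,]\eta,1-\eta[\,)$ with $\ff\geqslant 0$ and $\int_0^1 \ff = 1$, and set $g = g_0 - M\ff$ with $M = J_0 + 1 > 0$. Because $\ff$ and $\ff'$ vanish at $0$ and $1$, the boundary jets are preserved; because $\ff\geqslant 0$ and $g_0<0$, we keep $g<0$ on $[0,1]$; and $\int_0^1 g = J_0 - M = -1$. Then $f(t) = 1 + \int_0^t g$ is the sought $C^2$, strictly decreasing function with all six prescribed values. The one delicate point throughout is the sign control of the Hermite boundary layers, which is precisely why they are attached to the plateau with vanishing slope and over a short interval $\eta$, so that the negative endpoint values dominate.
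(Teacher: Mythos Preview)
Your proof is correct and follows essentially the same strategy as the paper: build $f'$ as a $C^1$ function with the prescribed boundary jets via cubic boundary layers glued to a constant plateau, then subtract a nonnegative bump to adjust $\int_0^1 f'$ to $-1$. The paper takes the plateau at height $0$ (and relies on its bump $\ff$ being strictly positive on $]0,1[$ to recover strict negativity of $f'$), whereas you take the plateau at $-\ee$, which makes the sign control on the plateau slightly cleaner; otherwise the two arguments are the same.
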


\begin{proof}
\textbf{Step 1:} first we construct a $C^2$ non increasing function $f_0$ satisfying
\[ \begin{array}{cc}
f_0(0)=1 & f_0(1)>0 \\ f_0'(0)=a & f_0'(1)=b \\ f_0''(0)=\alpha & f_0''(1)=\beta . \end{array} \]
Let $\ee >0$, we define $f_0'=0$ on $[\ee,1-\ee]$ and we enforce the boundary conditions on $f_0'$ and $f_0''$ with polynomials. Then we set
\[ f_0(t)=1+\int_0^tf_0'(s)\ud s \]
and choose $\ee$ small enough to define $f_0$.

We introduce the following polynomials
\[ \begin{array}{lcl}
P=(\lambda_1+\mu_1(X-\ee))(X-\ee)^2 & \text{ with } & \lambda_1 = 3a/\ee^2 + \alpha /\ee \\
 & & \mu_1 = 2a/\ee^3 + \alpha /\ee^2 \\
Q=(\lambda_2+\mu_2(1-\ee-X))(1-\ee-X)^2 & \text{ with } & \lambda_2 = 3b/\ee^2 - \beta /\ee \\
 & & \mu_2 = 2b/\ee^3 - \beta /\ee^2 \\
\end{array} \]
These two polynomials satisfy the conditions:
\[ \begin{array}{cccc}
P(0)=a & P(\ee)=0 & Q(1-\ee)=0 & Q(1)=b \\ P'(0)=\alpha & P'(\ee)=0 & Q'(1-\ee)=0 & Q'(1)=\beta .
\end{array} \]
Moreover, taking $\ee$ small enough, say $\ee < \min(3|a|/|\alpha|,3|b|/|\beta|)$, then these polynomials are negative where we need them to be.
\[ \begin{array}{cl}
P < 0 & \quad \text{on } [0,\ee[ \\ Q < 0 & \quad \text{on } ]1-\ee,1] . \end{array} \]
An easy calculation shows that
\[ \int_0^\ee P(t)\ud t + \int_{1-\ee}^1Q(t) \ud t = \frac{a+b}{2}\ee + \frac{\alpha-\beta}{12}\ee^2 < 0. \]
For $\ee<1/2$, let us define the function
\[ g(t) = \left\{ \begin{array}{ll}
P(t) & \quad \text{if } t\in [0,\ee] \\
0 & \quad \text{if } t\in [\ee,1-\ee] \\
Q(t) & \quad \text{if } t\in [1-\ee,1] . \end{array} \right. \]
The function $g$ is $C^1$, non positive and satisfies the boundary conditions $g(0)=a$, $g(1)=b$, $g'(0)=\alpha$ and $g'(1)=\beta$. Now we choose $\ee < \min(1/2,3|a|/|\alpha|,3|b|/|\beta|)$ such that
\[ 1 + \frac{a+b}{2}\ee + \frac{\alpha-\beta}{12}\ee^2 > 0 \]
and we define
\[ f_0(t)=1+\int_0^tg(s)\ud s . \]

\textbf{Step 2:} now let $\ff$ be a smooth function on $[0,1]$ such that $\ff$ is positive on $]0,1[$, vanishes at $t=0$ and $t=1$ at both zeroth and first orders and $\int_0^1 \ff(t) \ud t =1$.
Let $A>0$ such that \[ 1 + \frac{a+b}{2}\ee + \frac{\alpha-\beta}{12}\ee^2 - A =0 . \]
Take \[ f(t) = f_0(t) - A \int_0^t \ff(s)\ud s . \]
This function verifies the thesis of the lemma.
\end{proof} 

\renewcommand{\thethmref}{\ref{regulier}}

\begin{thmref}[Relaxation on regular sets]
Let $p>1$ and let $E$ be a $C^2$ compact subset included in $\OO$. Then
\[ \Wp(E,\OO) = W_p(E) .\]
\end{thmref}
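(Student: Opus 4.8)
The inequality $\Wp(E,\OO)\leqslant W_p(E)$ is immediate: since $E$ is itself a $C^2$ compact subset of $\OO$, the constant sequence $E_n=E$ is admissible in the infimum defining $\Wp(E,\OO)$ and yields $\liminf_n W_p(E_n)=W_p(E)$. All the content is in the reverse inequality $\Wp(E,\OO)\geqslant W_p(E)$, which amounts to the lower semicontinuity of the $p$-Willmore energy at the smooth set $E$. The plan is to fix an arbitrary admissible sequence, i.e. sets $E_n\subset\OO$ with $\od E_n\in C^2$ and $E_n\to E$ in $L^1(\OO)$, to prove $\liminf_n W_p(E_n)\geqslant W_p(E)$, and then to take the infimum over all such sequences.

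First I would discard the trivial case: if $\liminf_n W_p(E_n)=+\infty$ there is nothing to prove, so after extracting a subsequence realizing the liminf I may assume $\sup_n W_p(E_n)<+\infty$. To each boundary $\od E_n$ I associate the unit-multiplicity integral varifold $V_n$ it carries; its mass is the perimeter $P(E_n)$, and its first variation is bounded, up to a dimensional constant, by $\int_{\od E_n}|H|\,|X|\,\ud\HH{d-1}$, so that Hölder's inequality gives $\|\delta V_n\|\leqslant C\big(\int_{\od E_n}|H|^p\,\ud\HH{d-1}\big)^{1/p}P(E_n)^{1-1/p}$. The core of the argument is then to invoke an Allard-type compactness theorem for integral varifolds of uniformly bounded mass and first variation: it provides a subsequence converging to an integral varifold $V$ endowed with a generalized mean curvature in $L^p$, together with the lower semicontinuity estimate $\int|H_V|^p\,\ud V\leqslant\liminf_n W_p(E_n)$. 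This is precisely the step where $p>1$ is indispensable, because the lower semicontinuity of the curvature functional $V\mapsto\int|H|^p\,\ud V$ relies on weak compactness of the curvatures in $L^p$, which breaks down at $p=1$; this explains why the remark after Theorem \ref{coarea} isolates $p>1$ as being needed only for this lemma.

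It remains to identify the limit. The convergence $\ind{E_n}\to\ind{E}$ in $L^1(\OO)$ together with the lower semicontinuity of the perimeter forces the weight measures $\|V_n\|=\HH{d-1}\llcorner\od^* E_n$ to converge to a measure dominating $\HH{d-1}\llcorner\od E$ \cite{Ambrosio2000}; since $\od E$ is $C^2$, of unit density and without multiplicity, this means the density of $\|V\|$ is at least one on $\od E$, and on this smooth portion the generalized mean curvature of $V$ coincides $\|V\|$-almost everywhere with the classical mean curvature $H$ of $\od E$. Combining these facts, $W_p(E)=\int_{\od E}|H|^p\,\ud\HH{d-1}\leqslant\int|H_V|^p\,\ud V\leqslant\liminf_n W_p(E_n)$, as desired.

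The obstacle I expect to be the most delicate is the uniform mass bound $\sup_n P(E_n)<+\infty$ needed to launch the varifold compactness: $L^1$ convergence only controls the perimeter from below, so the bound has to be extracted from the uniform Willmore bound together with the boundedness of $\OO$, by means of the standard geometric measure theory estimates controlling area through diameter and curvature energy (these are in particular available in the scale invariant regime $p=d-1$ to which the lemma is ultimately applied). The second point requiring care is the identification of $V$, and of its generalized curvature, with the smooth surface $\od E$; this is exactly where the smoothness and compactness hypotheses on $E$ are genuinely used.
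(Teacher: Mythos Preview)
Your proposal is correct and follows essentially the same route as the paper: pass to varifolds, use Allard compactness plus lower semicontinuity of $\int |H|^p$ under varifold convergence, and invoke locality of the generalized mean curvature on the smooth hypersurface $\od E$ (the paper cites \cite{Ambrosio2003,Leonardi2009,Menne2013} for this step). The one point you flag as delicate---the uniform perimeter bound---is dispatched in the paper by a short direct computation: testing the first variation of $\od E_n$ against the position field $V(x)=x$ and using H\"older with exponents $p,q$ gives $(d-1)P(E_n)\leqslant M\,W_p(E_n)^{1/p}P(E_n)^{1/q}$, hence $P(E_n)\leqslant\bigl(M/(d-1)\bigr)^p W_p(E_n)$, where $M$ is the radius of a ball containing $\OO$.
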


\begin{proof}
In the literature, we can found proofs with the energy $P(E)+W_p(E)$ where $P$ stands for the $BV$ perimeter. In our context, the proof is exactly the same since we have a control on the perimeter by $\Wp$. Indeed, $\od E$ is a $C^2$ compact hypersurface, thus denoting $V$ the vector field defined by $V(x)=x$ and computing $\textup{div}_{\od E} V = d-1$ we have, by the divergence theorem,
\[ \int_{\od E}\textup{div}_{\od E}V \ud \HH{d-1} = - \int_{\od E}V\cdot \overrightarrow{H} \ud \HH{d-1} . \]
Then, using Hölder inequality with $1/p+1/q=1$, 
\[ (d-1)P(E) = \int_{\od E}\textup{div}_{\od E}V \ud \HH{d-1} \leqslant
	\left( \int_{\od E}|V|^q \ud \HH{d-1} \right)^{1/q}
	\left( \int_{\od E}|\overrightarrow{H}|^p \ud \HH{d-1} \right)^{1/p} . \]
Let $M$ be a positive real such that $\OO \subset B(0,M)$, we obtain
\[ (d-1)P(E) \leqslant M W_p(E)^{1/p} P(E)^{1/q} . \]
Thus
\[ P(E) \leqslant \left( \frac{M}{d-1} \right)^p W_p(E) . \]

We only provide the main line of the proof and we refer to \cite[Theorem 4]{Ambrosio2003}. Let $E_n$ be a sequence of $C^2$ sets included in $\OO$ such that $E_n \to E$ in $L^1(\OO)$. Without loss of generality, we suppose
$\displaystyle \sup_n W_p(E_n) < +\infty$ and then, by the previous inequality, $\displaystyle \sup_n P(E_n) < +\infty$. Consider the associated unit varifolds $V_n = v(\od E_n,1)$. As $\od E_n$ is regular, the classical mean curvature and the generalized mean curvature coincide and are denoted by $H_n$. By previous bounds on $W_p$ and $P$, the total mass and the first variation of $V_n$ are uniformly bounded. Then, by Allard's compactness Theorem, up to a subsequence still denoted by $n$, $V_n$ converges weakly-$\star$ to some integral varifold $V = v(\Sigma,\theta)$. We denote by $\mu_n$ and $\mu$ the masses of $V_n$ and $V$ respectively. Let $B(x,r)$ be a ball in $\OO$, by lower semicontinuity of the $BV$ perimeter, we have
\[ P(E,B(x,r)) \leqslant \liminf_{n\to \infty} P(E_n,B(x,r)) = \liminf_{n\to \infty} \mu_n(B(x,r)) = \mu(B(x,r)) .\]
Therefore $\od E \subset \textup{supp} V$ i.e. $\theta(x) \geqslant 1$ for $\HH{d-1}$-almost every $x \in \od E$.
By lower semicontinuity of the Willmore energy under varifolds convergence following from \cite[example 2.36]{Ambrosio2000}) and \cite{Ambrosio2003},  $V$ admits a generalized mean curvature denoted by $H$. Moreover, by the locality of the mean curvature (see \cite{Ambrosio2003,Leonardi2009} and the extension for $p>1$ and any dimension in \cite{Menne2013}), $H(x)$ coincides with the classical mean curvature of $\od E$ for $\HH{d-1}$-almost every $x \in \od E$. Therefore we have
\[ W_p(E) = \int_{\od E} |H|^p \ud \HH{d-1} \leqslant \int_{\Sigma} \theta|H|^p \ud \HH{d-1}
	\leqslant \liminf_{n \to +\infty} \int_{\od E_n} |H_n|^p \ud \HH{d-1} = \liminf_{n \to +\infty} W_p(E_n) .\]
Thus $W_p(E) \leqslant \Wp(E,\OO)$.
Taking the constant sequence $E_n = E$ we have $\Wp(E,\OO) \leqslant W_p(E)$, hence the lemma is proved. 
\end{proof}

\renewcommand{\thethmref}{\ref{inclusion}}

\begin{thmref}
Let $\lambda, \mu>0$. The function $(A,B) \mapsto |A\Delta B|$, defined on the collection of pairs $(A,B)$ such that $|A|=\lambda$ and $|B|=\mu$, reaches its minimum whenever $A\subset B$ or $B\subset A$ (up to a negligible set) and its minimum is $|A\Delta B| = |\lambda-\mu|$.
\end{thmref}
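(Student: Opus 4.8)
The plan is to reduce everything to the elementary measure-theoretic identity expressing $|A\Delta B|$ in terms of $|A\cap B|$. First I would write the symmetric difference as a \emph{disjoint} union $A\Delta B = (A\setminus B)\cup(B\setminus A)$, so that additivity of the Lebesgue measure gives $|A\Delta B| = |A\setminus B| + |B\setminus A|$. Then, using $|A| = |A\cap B| + |A\setminus B|$ and the analogous decomposition of $B$, I would substitute $|A\setminus B| = \lambda - |A\cap B|$ and $|B\setminus A| = \mu - |A\cap B|$ to obtain the key formula
\[ |A\Delta B| = \lambda + \mu - 2|A\cap B|. \]

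With this identity in hand, minimizing $|A\Delta B|$ over admissible pairs is equivalent to \emph{maximizing} $|A\cap B|$. Since $A\cap B \subset A$ and $A\cap B \subset B$, monotonicity of the measure yields $|A\cap B| \leqslant \min(\lambda,\mu)$, whence
\[ |A\Delta B| \geqslant \lambda+\mu-2\min(\lambda,\mu) = |\lambda-\mu|, \]
which is exactly the asserted lower bound.

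It then remains to treat attainment and the equality cases. Assuming without loss of generality $\lambda \leqslant \mu$, if $A\subset B$ up to a negligible set then $A\cap B = A$ up to a negligible set, so $|A\cap B| = \lambda = \min(\lambda,\mu)$ and the bound is met; the case $B\subset A$ with $\mu\leqslant\lambda$ is symmetric. Conversely, equality forces $|A\cap B| = \min(\lambda,\mu)$, and when $\lambda\leqslant\mu$ this gives $|A\setminus B| = \lambda - |A\cap B| = 0$, i.e. $A\subset B$ up to a negligible set. This identifies the minimizers and shows the minimal value is $|\lambda-\mu|$.

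The argument is entirely elementary and I do not anticipate any genuine obstacle; the only point requiring mild care is bookkeeping of the phrase \emph{up to a negligible set} throughout, so that the inclusion characterizing the minimizers is read with respect to the Lebesgue measure rather than as literal set-theoretic inclusion. In particular one must note that $|A\setminus B|=0$ is precisely the measure-theoretic sense of $A\subset B$ used in the statement.
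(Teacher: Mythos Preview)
Your proof is correct and follows essentially the same approach as the paper: both derive the identity $|A\Delta B| = \lambda + \mu - 2|A\cap B|$, bound $|A\cap B| \leqslant \min(\lambda,\mu)$, and identify the equality case with inclusion up to a null set. If anything, your version is slightly more explicit about the converse direction (that equality forces inclusion) than the paper's proof.
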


\begin{proof}
\[ |A\Delta B| = |A\smallsetminus B| + |B\smallsetminus A|
= |A|+|B|-2|A\cap B| = \lambda + \mu - 2|A\cap B| . \]
We have $|A\cap B| \leqslant \min(|A|,|B|)$ where equality found when one set is included into the other (up to a negligible set). Then $(A,B) \mapsto |A\Delta B|$ achieves its minimum for $A\subset B$ or $B\subset A$. \\
If $A\subset B$ then $|A\cap B|=|A|$ and $\mu \geqslant \lambda$ thus $|A\Delta B|=\mu-\lambda $. \\
If $B\subset A$ then $|A\cap B|=|B|$ and $\mu \leqslant \lambda$ thus $|A\Delta B|=\lambda-\mu $.
\end{proof}

\renewcommand{\thethmref}{\ref{minwillmore}}

\begin{thmref}
Let $E$ be a compact and non negligible subset of $\RR{d}$. Then
\[ \WW(E,\OO) \geqslant \omega_d . \]
\end{thmref}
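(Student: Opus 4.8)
The plan is to first establish the purely geometric inequality $W(E)=\int_{\od E}|H|^{d-1}\,\ud\HH{d-1}\geqslant\omega_d$ for every $C^2$ compact non-negligible set $E$, and then to transfer it to the relaxed energy $\WW$ by the lower semicontinuity built into the definition of $\WW$. The geometric step is where the real content lies, so I would organize the argument around the Gauss map of $\od E$.

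For the geometric inequality I would introduce the outward Gauss map $\nu:\od E\to S^{d-1}$ of the compact $C^2$ hypersurface $\od E$, together with the convex part
\[ M^+=\{\,p\in\od E \ | \ \text{the shape operator at } p \text{ is positive semidefinite}\,\}. \]
The first key point is the surjectivity $\nu(M^+)=S^{d-1}$: fixing a direction $y\in S^{d-1}$, the linear functional $p\mapsto\langle p,y\rangle$ attains its maximum over the compact solid $E$ at some boundary point $p_0$; there the tangent hyperplane is orthogonal to $y$ with $E$ contained in $\{\langle\cdot,y\rangle\leqslant\langle p_0,y\rangle\}$, so $\nu(p_0)=y$ and all principal curvatures $\kappa_1,\dots,\kappa_{d-1}$ at $p_0$ are nonnegative, i.e.\ $p_0\in M^+$.

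The second key point combines the arithmetic-geometric mean inequality with the area formula. On $M^+$ every $\kappa_i\geqslant 0$, hence $H=\frac{1}{d-1}\sum_i\kappa_i\geqslant 0$ and AM-GM gives $|H|^{d-1}=H^{d-1}\geqslant\prod_i\kappa_i=K$, where $K$ is the Gauss-Kronecker curvature; moreover $K$ equals the Jacobian $|J\nu|$ of the Gauss map on $M^+$ since $K\geqslant 0$ there. Applying the area formula (the manifold version of Theorem \ref{coarea general}) to $\nu$, so that $\int_{M^+}|J\nu|\,\ud\HH{d-1}=\int_{S^{d-1}}\#(\nu^{-1}(y)\cap M^+)\,\ud\HH{d-1}(y)$, and using the surjectivity $\nu(M^+)=S^{d-1}$, I obtain
\[ W(E)=\int_{\od E}|H|^{d-1}\,\ud\HH{d-1}\geqslant\int_{M^+}|H|^{d-1}\,\ud\HH{d-1}\geqslant\int_{M^+}|J\nu|\,\ud\HH{d-1}\geqslant\HH{d-1}(S^{d-1})=\omega_d. \]

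Finally I would pass to the relaxation. Let $(E_n)$ be any admissible sequence with $\od E_n\in C^2$, $E_n\subset\OO$ and $E_n\to E$ in $L^1(\OO)$. Since $|E|>0$ and $|E_n|\to|E|$, we have $|E_n|>0$ for $n$ large, so the geometric inequality yields $W(E_n)\geqslant\omega_d$ eventually, whence $\liminf_n W(E_n)\geqslant\omega_d$; taking the infimum over all admissible sequences gives $\WW(E,\OO)\geqslant\omega_d$. The main obstacle is the geometric step, and within it the surjectivity of the Gauss map restricted to the convex part $M^+$ together with the sign bookkeeping that makes the AM-GM comparison $|H|^{d-1}\geqslant K$ legitimate precisely on $M^+$.
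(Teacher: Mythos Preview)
Your argument is correct and follows essentially the same two-step structure as the paper: first the geometric inequality $W(E)\geqslant\omega_d$ for smooth compact $E$, then the straightforward passage to the relaxation via an approximating sequence. The only difference is that the paper invokes the geometric inequality as a known result (citing \cite{Jiazu2011}), whereas you supply a self-contained Gauss-map proof (surjectivity of $\nu|_{M^+}$, AM--GM to bound $|H|^{d-1}$ below by the Gauss--Kronecker curvature, and the area formula); this is exactly the standard proof behind the citation, so there is no genuine divergence in method.
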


\begin{proof}
It is a direct consequence of well known results for the scale invariant Willmore energy (see \cite{Jiazu2011} and references therein for instance): for all regular compact hypersurfaces $M$ of $\RR{d}$, we have
\[ \int_M |H|^{d-1} \ud \HH{d-1} \geqslant \omega_d \]
with equality only for spheres. \\
$\WW(E,\OO)<+\infty$ implies there exists a sequence $(E_n)$ of compact subsets of $\OO$ of class $C^2$ such that $E_n \to E$ in $L^1$ and $W(E_n) \to \WW(E,\OO)$ when $n\to +\infty$. Using the result on the boundary $\od E_n$, we have
\[ W(E_n) \geqslant \omega_d .\]
Passing to the limit $n\to +\infty$, we have
\[ \WW(E,\OO) \geqslant \omega_d .\]
\end{proof}

\end{document}